\documentclass[a4,10pt,leqno]{amsart}
\usepackage{amsmath}
\usepackage{amsthm}
\usepackage{amsfonts}
\usepackage{mathtools}
\usepackage{amssymb}
\usepackage{graphicx}
\usepackage{epstopdf}
\usepackage{amscd}
\usepackage{color}
\usepackage{enumerate}
\usepackage{hyperref}
\pagestyle{plain}

\theoremstyle{plain}
\newtheorem{theorem}{Theorem}[section]
\newtheorem{defn}{Definition}[section]
\newtheorem{lem}{Lemma}[section]
\newtheorem{prop}{Proposition}[section]
\newtheorem{rmk}{Remark}[section]
\newtheorem{cor}{Corollary}[section]
\DeclareMathOperator{\supp}{supp}
\def\C{\mathbf {C}}
\newcommand{\B}[1]{\mathbb{#1}}

\newcommand{\eps}{\varepsilon}

\makeatletter
\def\cleardoublepage{\clearpage\if@twoside \ifodd\c@page\else
\hbox{}
\thispagestyle{empty}
\newpage
\if@twocolumn\hbox{}\newpage\fi\fi\fi}
\makeatother
\title{Asymptotic behaviours in Fractional Orlicz-Sobolev spaces on Carnot groups}

\author{M. Capolli}
\address{Marco Capolli: Dipartimento di Matematica\\Universit\`a di Trento\\ Via Sommarive 14\\ 38123, Povo (Trento) - Italy\\}
\email{marco.capolli@unitn.it}
\thanks{M.C. is supported by MIUR, Italy, GNAMPA of INDAM and University of Trento, Italy.}
\author{A. Maione}
\address{Alberto Maione: Dipartimento di Matematica\\Universit\`a di Trento\\ Via Sommarive 14\\ 38123, Povo (Trento) - Italy\\}
\email{alberto.maione@unitn.it}
\thanks{A.M. is supported by MIUR, Italy, GNAMPA of INDAM and University of Trento, Italy.}
\author{A. M. Salort}
\address{Ariel M. Salort: Departamento de Matem\'atica, FCEyN - Universidad de Buenos Aires and IMAS - CONICET Ciudad Universitaria, Pabell\'on I (1428) Av. Cantilo s/n., Buenos Aires, Argentina.}
\email{asalort@dm.uba.ar}
\urladdr{http://mate.dm.uba.ar/~asalort}
\author{E. Vecchi}
\address{Eugenio Vecchi: Dipartimento di Matematica\\Politecnico di Milano\\ Piazza Leonardo da Vinci\\ 20133, Milano (MI) - Italy\\}
\email{eugenio.vecchi@polimi.it}

\date{\today}
\begin{document}
\begin{abstract}
In this article we define a class of fractional Orlicz-Sobolev spaces on Carnot groups and, in the spirit of the celebrated results of Bourgain-Brezis-Mironescu and of Maz'ya-Shaposhnikova, we study the asymptotic behavior of the Orlicz functionals when the fractional parameter goes to $1$ and $0$.
\end{abstract}
\maketitle
\section{Introduction}
In the seminal paper \cite{BBM}, Bourgain, Brezis and Mironescu proved that for any smooth bounded domain $\Omega\subset \mathbb{R}^n$, 
$u\in W^{1,p}(\Omega)$, $1\leq p<\infty$, the well-known fractional Gagliardo seminorm recovers its local counterpart  as $s$ goes to $1$, in the sense that
\begin{equation}\label{BBMsto1}
\lim_{s\uparrow 1}(1-s)\iint_{\Omega\times\Omega} \frac{|u(x)-u(y)|^p}{|x-y|^{n+sp}}\,dx\,dy = K(n,p)\int_\Omega |\nabla u|^p\,dx,
\end{equation}
\noindent where the constant $K(n,p)$ is defined as
\label{valoreK}
\begin{equation*}
K(n,p)=\frac{1}{p}\int_{{\mathbb S}^{n-1}}|{\boldsymbol \omega}\cdot h|^{p}\,d\mathcal{H}^{n-1}(h).
\end{equation*}
Here $\mathbb{S}^{n-1} \subset \mathbb{R}^n$ denotes the unit sphere, $\mathcal{H}^{n-1}$ is the
$(n-1)$-dimensional Hausdorff measure and ${\boldsymbol \omega}$ is an arbitrary unit vector of $\mathbb{R}^n$. 
See \cite{bre} for a survey on this and related results.\\
In the recent years, there has been an increasing interest towards this kind of results, aiming at extending it
in different directions. Despite the literature concerning the generalizations of such kind of results is now pretty vast,
let us try to give a brief account of it. 
The case of $BV$-functions, originally only partially answered in \cite{BBM}, has been considered independently in
\cite{davila} and \cite{Ponce}. In \cite{Spe,Spe2}, the authors covered the case of general open sets $\Omega \subset \mathbb{R}^{n}$, both for Sobolev and $BV$-functions.
More recently, starting with \cite{Ng06}, Nguyen obtained new characterizations of the classical Sobolev space by
means of more general nonlocal functionals, providing also Poincar\'{e}-type inequalities and several other results, see \cite{Ng08,Ng11,Ng11bis}. The nature of the nonlocal functionals, there considered, differs from the classical Gagliardo seminorm
and it has recently found applications in the field of image processing, see \cite{bre-linc,BHN,BHN2,BHN3}.\\
The above mentioned results have also been proved to hold in the case of {\it magnetic Sobolev spaces}. Roughly speaking, these spaces are the natural functional setting in electromagnetism when dealing with particles interacting with a magnetic field. 
We refer to \cite{SqVo} for the analogous of \eqref{BBMsto1} when $p=2$, and to \cite{PSV2} for the general case and for magnetic $BV$ functions.
We finally refer to \cite{NPSV,NPSV2} 
for similar results for more general nonlocal functionals akin to those considered in \cite{Ng06}.\\
A complementary and natural question is what happens when considering the limit as $s$ goes to $0$. 
The answer is contained in \cite{MS}, where it is proved that for any $n\geq 1$ and $p\in [1,\infty)$, 
\begin{equation}\label{MSclassic}
    \lim_{s\downarrow 0} s\iint_{\mathbb{R}^n\times\mathbb{R}^n}\frac{|u(x)-u(y)|^p}{|x-y|^{n+sp}}\,dx\,dy= 
    2\dfrac{|S|^{n-1}}{p}\int_{\mathbb{R}^n}|u|^p\,dx,
\end{equation}
whenever $u\in\bigcup_{s\in(0,1)}W^{s,p}_{0}(\mathbb{R}^n)$. We refer to \cite{PSV} for the magnetic version of \eqref{MSclassic}.
\medskip

The asymptotic theory described so far concerns always the classical Euclidean setting. Nevertheless,
recent contributions started to attack Bourgain-Brezis-Mironescu-type results even in the case where non--Euclidean geometries
appear. We refer, for instance, to \cite{KrMo} for the case of compact Riemannian manifolds.\\
As for many other problems, one of the non--Euclidean setting where to look for extensions is provided by Carnot groups, that are connected, simply connected and nilpotent Lie groups whose associated Lie algebra is stratified (see Section 2 for more details). These spaces are usually the easiest examples of
sub--Riemannian manifolds.
Fractional Sobolev spaces are now a well established notion even in Carnot groups, see \cite{FF,FS} for more details.
In this setting, in \cite{B,MaPi}, it was studied the validity of a Bourgain-Brezis-Mironescu-type formula to treat the asymptotic behavior of the corresponding fractional Sobolev seminorm as $s$ goes to $1$. 
We also refer to \cite{Cui} for the case of more general nonlocal functionals in the spirit of Nguyen.

\medskip 

Following this line of research, one of the most recent contributions, due to Fern\'andez Bonder and Salort, deals with Bourgain-Brezis-Mironescu-type results
for the class of Orlicz-Sobolev spaces, see also \cite{BS2} for similar results in the magnetic setting. 
The paper \cite{BS} is actually one of our motivations in this work.\\
Indeed, this manuscript aims at extending 
the validity of Bourgain-Brezis-Mironescu-type formulas in Carnot groups, when behaviors more general than powers are taken into account. In this context, Young functions play a preponderant role and Orlicz-Sobolev spaces become the natural framework to deal with. To fix ideas, when speaking of a Young function, we will refer to a continuous, non-negative, strictly increasing and convex function on $[0,\infty)$ vanishing at the origin, which, in order to give a well-posed space of definition, will be asked to fulfill the following structural growth condition
\begin{equation} \label{cond.intro} \tag{L}
1<p^-\leq \frac{t\,\varphi'(t)}{\varphi(t)} \leq p^+<\infty \quad \forall t>0.
\end{equation}
We refer the interested reader to the books \cite{DHHR,KR,PKJF} for a comprehensive introduction to Young functions and Orlicz spaces. Having these definitions in mind, following \cite{FS}, we define the natural generalization of the fractional Sobolev spaces for Carnot groups in the Orlicz setting. More precisely, given a Carnot group $\mathbb{G}$ of homogeneous dimension $Q$, a Young function $\varphi$ and a fractional parameter $0<s<1$, we consider the space
$$
W^{s,\varphi}(\mathbb{G}):=\{u:\mathbb{G}\to\mathbb{R}\ \text{measurable such that}\ \Phi_{\varphi}(u) + \Phi_{s,\varphi}(u)<\infty\}
$$
where
$$
\Phi_{\varphi}(u):=\int_{\mathbb{G}}\varphi(|u(x)|)\,dx,
\qquad \Phi_{s,\varphi}(u):=\iint_{\mathbb{G}\times\mathbb{G}}\varphi\left(\frac{|u(x)-u(y)|}{\|y^{-1}\cdot x\|_\mathbb{G}^s}\right)\,\frac{dx\,dy}{\|y^{-1}\cdot x\|_\mathbb{G}^Q}.
$$
It turns out that $W^{s,\varphi}(\mathbb{G})$ is a reflexive Banach space endowed with the correspondent Luxembourg norm (see Theorem \ref{2.10} for more details).

\medskip

We are now ready to state our first main result, that is a Bourgain-Brezis-Mironescu-type formula 
in the case of Orlicz-Sobolev spaces in Carnot groups.
We refer to Section \ref{section2} for a detailed account of 
all the definitions needed in the following
\begin{theorem}\label{BBM}
Let $\varphi$ be an Orlicz function such that the following limit exists
\begin{equation*}
    \tilde{\varphi}(t) := \lim_{s \uparrow 1}(1-s)\int_0^1\left(\int_{S} \varphi(t \|z'\|_{\mathbb{R}^m} r^{1-s})\,d\sigma(z)\right) \frac{dr}{r}.
\end{equation*}
Then, for any $u\in L^{\varphi}(\B{G})$ and $0<s<1$ it holds that
\[
\lim_{s\uparrow 1}(1-s)\Phi_{s,\varphi}(u) = \Phi_{\tilde{\varphi}}(\|\nabla_\mathbb{G} u\|_{\mathbb{R}^m}).
\]
\end{theorem}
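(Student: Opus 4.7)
\emph{Proof plan.} The plan is to transform $(1-s)\Phi_{s,\varphi}(u)$ into an integral that, after passing to the limit $s\uparrow 1$, can be read off directly from the definition of $\tilde{\varphi}$. First, by left-invariance of the Haar measure on $\mathbb{G}$, the change of variables $h=y^{-1}\cdot x$ yields
\[
\Phi_{s,\varphi}(u)=\iint_{\mathbb{G}\times\mathbb{G}}\varphi\!\left(\frac{|u(y\cdot h)-u(y)|}{\|h\|_\mathbb{G}^{s}}\right)\frac{dh\,dy}{\|h\|_\mathbb{G}^{Q}}.
\]
The homogeneous polar decomposition $h=\delta_{r}(z)$ with $r>0$ and $z\in S$ turns $\|h\|_\mathbb{G}=r$ and $dh/\|h\|_\mathbb{G}^{Q}=(dr/r)\,d\sigma(z)$, so that the $r$-integral can be split as $(0,1]\cup(1,\infty)$ and each range treated separately.

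For the tail $r>1$, convexity of $\varphi$ together with the polynomial bound $\varphi(\lambda t)\le\lambda^{p^{+}}\varphi(t)$ for $\lambda\ge 1$ coming from \eqref{cond.intro} produces a pointwise majorant of the form $C(\varphi(|u(yh)|)+\varphi(|u(y)|))\,r^{-sp^{-}}$; integrating first in $y$ (again using left-invariance of the Haar measure) and then in $r$ against $dr/r$ leaves a bound of the form $C(1-s)/(sp^{-})\,\Phi_{\varphi}(u)$, which vanishes as $s\uparrow 1$.

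For the core $r\in(0,1]$, I would first assume $u\in C_{c}^{\infty}(\mathbb{G})$ and use the Carnot-group first-order expansion
\[
u(y\cdot\delta_{r}z)-u(y)=r\,\langle\nabla_{\mathbb{G}}u(y),z'\rangle+O(r^{2}),
\]
uniform in $y\in\supp u$ and $z\in S$, which rewrites $|u(y\delta_{r}z)-u(y)|/r^{s}=r^{1-s}|\langle\nabla_{\mathbb{G}}u(y),z'\rangle|+o(r^{1-s})$. Applying $\varphi$, exploiting its continuity together with an $s$-uniform modular bound produced by \eqref{cond.intro}, and invoking dominated convergence in $y$ reduces matters to the pointwise identification
\[
\lim_{s\uparrow 1}(1-s)\int_{0}^{1}\!\!\int_{S}\varphi\!\left(\frac{|u(y\delta_{r}z)-u(y)|}{r^{s}}\right)\frac{dr}{r}\,d\sigma(z)=\tilde{\varphi}(\|\nabla_{\mathbb{G}}u(y)\|_{\mathbb{R}^{m}}),
\]
which is essentially the very definition of $\tilde{\varphi}$, combined with the spherical averaging inherent to the polar measure $d\sigma$ on $S$. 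The general case $u\in L^{\varphi}(\mathbb{G})$ is then recovered by density of $C_{c}^{\infty}(\mathbb{G})$ in the appropriate Orlicz-Sobolev space and by the uniform-in-$s$ modular continuity estimates afforded by \eqref{cond.intro}.

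The step I expect to be hardest is the $s$-uniform integrable majorant on the core that legitimizes the interchange of $\lim_{s\uparrow 1}$ with the double integration, and in particular its survival under the density step. The two-sided polynomial comparison $\varphi(\lambda t)\le\max(\lambda^{p^{-}},\lambda^{p^{+}})\varphi(t)$ extracted from \eqref{cond.intro}, equivalent to $\varphi\in\Delta_{2}\cap\nabla_{2}$, is exactly what converts the formal Taylor bound into a genuine modular estimate robust under mollification. A further subtlety is the anisotropy of $\delta_{r}$: only the horizontal coordinates of $z$ contribute at first order in $r$, while the remaining stratum coordinates enter at strictly higher powers, so one must separate carefully the horizontal and non-horizontal components of $z$ on $S$ to ensure that the higher-layer directions do not spoil the leading behavior when seen through $\varphi$.
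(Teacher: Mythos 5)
Your overall architecture (polar decomposition of the fractional modular, first-order stratified Taylor expansion for smooth compactly supported $u$, dominated convergence, then density) matches the paper's Step 1 + Step 2 + Step 3 quite closely, and the near/far splitting of the $r$-integral with the $r>1$ tail contributing $O((1-s))$ is exactly what the paper does inside Lemma~\ref{lemma4.3}. You also correctly flag the two genuinely delicate points: the anisotropy of the dilations, for which only the horizontal component $z'$ contributes at order $r$, and the need for the homogeneous norm to be invariant under horizontal rotations so that $\int_S\varphi(|\nabla_\mathbb{G}u(x)\cdot z'|\,r^{1-s})\,d\sigma(z)$ can be written as $\int_S\varphi(\|\nabla_\mathbb{G}u(x)\|_{\mathbb{R}^m}\|z'\|_{\mathbb{R}^m}r^{1-s})\,d\sigma(z)$.

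There is, however, a genuine gap in the final extension step. Your plan closes with ``density of $C_c^\infty(\mathbb{G})$ in the appropriate Orlicz--Sobolev space and uniform-in-$s$ modular continuity,'' but the theorem is stated for $u\in L^\varphi(\mathbb{G})$, not $u\in W^{1,\varphi}(\mathbb{G})$. Density arguments of the type you describe only take you from $C_c^\infty$ to $W^{1,\varphi}$: they rely on controlling $\Phi_{s,\varphi}(u-u_k)$ via an a priori bound of the form $\Phi_{s,\varphi}(v)\leq C\big(\tfrac{1}{1-s}\Phi_\varphi(\|\nabla_\mathbb{G}v\|)+\tfrac{1}{s}\Phi_\varphi(v)\big)$, which already presupposes that $u$ has a horizontal gradient in $L^\varphi$. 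For $u\in L^\varphi$ with no a priori Sobolev regularity, approximating by smooth functions in $L^\varphi$ gives no control on the gradients of the approximants, so the estimate does not pass to the limit and the right-hand side $\Phi_{\tilde\varphi}(\|\nabla_\mathbb{G}u\|)$ is not even defined a priori. The missing idea is a compactness argument: truncate and mollify $u$ to get $u_{k,\varepsilon}=(\eta_k u)*\rho_\varepsilon$, show via lemmas on stability of $\Phi_{s,\varphi}$ under truncation and mollification that $\liminf_{s\uparrow 1}(1-s)\Phi_{s,\varphi}(u_{k,\varepsilon})$ is bounded uniformly in $k,\varepsilon$, conclude from the already-proved smooth case that $\{u_{k,\varepsilon}\}$ is bounded in $W^{1,\tilde\varphi}(\mathbb{G})\simeq W^{1,\varphi}(\mathbb{G})$, and then use reflexivity to extract a weak limit which must equal $u$, thereby upgrading $u$ from $L^\varphi$ to $W^{1,\varphi}$. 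Without this compactness step your plan does not reach the stated class $L^\varphi(\mathbb{G})$, and even for $u\in W^{1,\varphi}(\mathbb{G})$ you still need the explicit Gagliardo-type upper bound on $\Phi_{s,\varphi}$ in terms of $\Phi_\varphi(\|\nabla_\mathbb{G}\cdot\|)$ and $\Phi_\varphi(\cdot)$ to make the ``uniform-in-$s$ modular continuity'' quantitative; that estimate is an ingredient you should state and prove, not assume.
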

Let us spend a few words about the proof of Theorem \ref{BBM}. Roughly speaking, the main technical point concerns a regularization argument in terms of truncations and convolutions (see Lemma \ref{2.13} and Lemma \ref{2.14}), combined with a compactness argument derived from a Rellich-Kondrachov-type result (see Theorem \ref{comp}).

Let us remark that, even in the prototype case $\varphi(t)=t^p$, we can prove Thorem \ref{BBM} only in the case of homogeneous norms that are invariant under horizontal rotations. See Section \ref{section2} for more details.
\medskip

In the second part of the paper we study the extension of \eqref{MSclassic} in the case of fractional Orlicz-Sobolev spaces on Carnot groups. Our main result in this context is provided by the following
\begin{theorem}\label{MS1}
If $u\in\bigcup_{s\in (0,1)}W^{s,\varphi}(\mathbb{G})$, then
\begin{equation*}
    \dfrac{4}{\C}\dfrac{QC_b}{p^+}\Phi_\varphi(u)\leq\liminf_{s\downarrow 0}s\Phi_{s,\varphi}(u)\leq\limsup_{s\downarrow 0}s\Phi_{s,\varphi}(u)\leq\C\dfrac{QC_b}{p^-}\Phi_\varphi(u).
\end{equation*}
In particular, if $\lim_{s\downarrow 0}s\Phi_{s,\varphi}(u)$ exists, then
\begin{align*}
    \dfrac{4}{\C}\dfrac{QC_b}{p^+}\Phi_\varphi(u)\leq\lim_{s\downarrow 0}s\Phi_{s,\varphi}(u)\leq\C\dfrac{QC_b}{p^-}\Phi_\varphi(u).
\end{align*}
\end{theorem}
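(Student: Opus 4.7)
The strategy parallels the classical Maz'ya--Shaposhnikova argument transplanted to the Orlicz setting on a Carnot group. The key ingredient is the two-sided estimate provided by the growth condition \eqref{cond.intro}:
\[
a^{p^+}\varphi(t)\leq\varphi(at)\leq a^{p^-}\varphi(t)\qquad(0<a\leq 1),
\]
and the reverse inequality for $a\geq 1$. Writing $d(x,y):=\|y^{-1}\cdot x\|_\mathbb{G}$ and setting $a=d^{-s}$, one reduces the analysis of $\varphi(|u(x)-u(y)|/d^s)$ to that of $\varphi(|u(x)-u(y)|)$ times a power of $d$. I would split $\Phi_{s,\varphi}(u)$ into a \emph{near} piece $\{d\leq 1\}$ and a \emph{far} piece $\{d>1\}$. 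The near piece is harmless in both directions: since $u\in W^{s_0,\varphi}(\mathbb{G})$ for some $s_0\in(0,1)$, for $s<s_0$ the inequality $d^{-s}\leq d^{-s_0}$ on $\{d\leq 1\}$ gives $\varphi(|u(x)-u(y)|/d^s)\leq \varphi(|u(x)-u(y)|/d^{s_0})$, so the near contribution is dominated by $\Phi_{s_0,\varphi}(u)<\infty$ and vanishes once multiplied by $s\downarrow 0$.

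\textbf{Upper bound.} On the far piece I apply convexity to write $\varphi(|u(x)-u(y)|/d^s)\leq \tfrac12\varphi(2|u(x)|/d^s)+\tfrac12\varphi(2|u(y)|/d^s)$, and then exploit the left-invariance and unimodularity of the Haar measure on $\mathbb{G}$: the substitution $z=y^{-1}\cdot x$ combined with the symmetry $x\leftrightarrow y$ reduces the far piece to $\int_\mathbb{G}\int_{\|z\|_\mathbb{G}\geq 1}\varphi(2|u(x)|/\|z\|_\mathbb{G}^s)\|z\|_\mathbb{G}^{-Q}\,dz\,dx$. The growth estimate yields $\varphi(2|u|/\|z\|_\mathbb{G}^s)\leq \|z\|_\mathbb{G}^{-sp^-}\varphi(2|u|)$ on $\{\|z\|_\mathbb{G}\geq 1\}$; polar integration gives $\int_{\|z\|_\mathbb{G}\geq 1}\|z\|_\mathbb{G}^{-Q-sp^-}\,dz=QC_b/(sp^-)$; and the doubling $\varphi(2t)\leq 2^{p^+}\varphi(t)$ closes the estimate, delivering the upper bound with a suitable constant $\C$ (absorbing $2^{p^+}$).

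\textbf{Lower bound.} On the far piece, the other side of the growth condition gives $\varphi(|u(x)-u(y)|/d^s)\geq d^{-sp^+}\varphi(|u(x)-u(y)|)$. The natural convexity rearrangement $\varphi(|u(x)-u(y)|)\geq 2^{1-p^+}\varphi(|u(x)|)-\varphi(|u(y)|)$ is useless on its own: after Fubini, the $\varphi(|u(x)|)$ and $\varphi(|u(y)|)$ terms give equal contributions that cancel via the symmetry $d(x,y)=d(y,x)$ of the weight $d^{-Q-sp^+}$. To break this symmetry, I would approximate $u$ by a function $u_M$ supported in a ball $B_M$, using density of smooth compactly supported functions in $W^{s_0,\varphi}(\mathbb{G})$ (available from the Banach/reflexivity setup recalled in Theorem \ref{2.10}). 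For such $u_M$ and any $x\in B_M$, a point $y$ with $d(x,y)\geq 3M$ lies outside $\supp(u_M)$, so $|u_M(x)-u_M(y)|=|u_M(x)|$ exactly. Since for $x\in B_M$ and $\|y\|_\mathbb{G}\geq 3M$ the homogeneous triangle inequality forces $d(x,y)$ to be comparable to $\|y\|_\mathbb{G}$ (say $d(x,y)\leq \tfrac{4}{3}\|y\|_\mathbb{G}$), a polar-coordinate computation produces
\[
s\iint_{d\geq 3M}\varphi(|u_M(x)|)\,d^{-Q-sp^+}\,dx\,dy\;\xrightarrow[s\downarrow 0]{}\;c_Q\,\frac{QC_b}{p^+}\,\Phi_\varphi(u_M),
\]
for a purely geometric constant $c_Q\in(0,1)$. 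Passing to the limit $M\to\infty$ and absorbing $c_Q$ together with the factor $2^{1-p^+}$ into $4/\C$ transfers the estimate to arbitrary $u$.

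\textbf{Main obstacle.} The delicate step is the lower bound: convexity and \eqref{cond.intro} only produce estimates that are pointwise \emph{symmetric} in $x,y$, and the weight $\|y^{-1}\cdot x\|_\mathbb{G}^{-Q-sp^+}$ shares this symmetry, so no pointwise manipulation alone can control $\iint\varphi(|u(x)-u(y)|)d^{-Q-sp^+}$ from below by $\Phi_\varphi(u)$. Breaking the $x\leftrightarrow y$ symmetry via compact-support approximation (or, alternatively, via a truncation argument based on the absolute continuity of $\Phi_\varphi(u)$ together with a tail estimate for $\int_{\mathbb{G}\setminus B_R}\varphi(|u(y)|)dy$) is unavoidable, and the careful bookkeeping of all constants arising from this approximation into the form $4/\C$ is the main technical delicacy.
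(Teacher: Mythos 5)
Your proposal takes a genuinely different route from the paper, and while the upper bound is sound, the lower-bound half contains bookkeeping errors and leaves a genuine gap.

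\textbf{Comparison.} The paper does not split into near/far pieces $\{d\leq 1\}$ versus $\{d>1\}$. For the $\liminf$ it decomposes the double integral over $\{\|y^{-1}\cdot x\|_\mathbb{G}>2\|x\|_\mathbb{G}\}$ against its complement, and for the $\limsup$ over $\{\tfrac12\|y\|<\|x\|<\|y\|\}$, $\{\|x\|\leq\tfrac12\|y\|\}$, $\{\|x\|\geq\|y\|\}$. The symmetry obstruction you correctly identify is broken in the paper \emph{without} compact-support approximation: a Fubini and change-of-variables identity gives $\Phi_{s,\varphi}(u)\geq 2(i)$, where $(i)$ is the piece on $\{\|y^{-1}\cdot x\|_\mathbb{G}>2\|x\|_\mathbb{G}\}$; then the $\Delta_2$-condition replaces $|u(x)-u(y)|$ by $|u(x)|$ at the cost of a $|u(y)|$-term, and the crucial observation is that $\|y^{-1}\cdot x\|_\mathbb{G}>2\|x\|_\mathbb{G}$ confines $\|y^{-1}\cdot x\|_\mathbb{G}$ to the annulus $(\tfrac23\|y\|_\mathbb{G},2\|y\|_\mathbb{G})$, so the $|u(y)|$-contribution acquires a factor $(3^{s\tilde p}-1)$ that vanishes after multiplying by $s$. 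This is unconditional, works directly for any $u$, and is precisely what produces the constant $\tfrac4\C\tfrac{QC_b}{p^+}$.

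\textbf{Your lower bound: the gaps.} First, the constant bookkeeping is off. For $u_M$ with $\supp u_M\subset B(0,M)$ and $x\in B(0,M)$, polar integration over $\{d(x,y)\geq 3M\}$ via Proposition~\ref{prop.radialfunct} gives exactly $\tfrac{QC_b}{sp^+(3M)^{sp^+}}$, whose limit as $s\downarrow 0$ is $\tfrac{QC_b}{p^+}$: there is no geometric constant $c_Q<1$, and the factor $2^{1-p^+}$ you quote comes from the abandoned convexity rearrangement and should not appear. A correctly completed version of your argument would produce $\tfrac{QC_b}{p^+}\Phi_\varphi(u)$, a constant that differs from the paper's $\tfrac4\C\tfrac{QC_b}{p^+}$ (better when $\C>4$, worse when $\C<4$), so the claim that these factors ``absorb into $4/\C$'' is not correct. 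Second, and more seriously, the passage from the estimate for $u_M$ to one for $u$ is not a formality: $\Phi_{s,\varphi}$ is a modular, not a norm, so a triangle inequality is unavailable. You would need Lemma~\ref{lem.2.6}, $\varphi(a)\leq C_\delta\varphi(a-b)+(1+\delta)^{p^+}\varphi(b)$, to obtain
\begin{equation*}
\Phi_{s,\varphi}(u)\geq(1+\delta)^{-p^+}\bigl(\Phi_{s,\varphi}(u_M)-C_\delta\Phi_{s,\varphi}(u-u_M)\bigr),
\end{equation*}
then control $\limsup_{s\downarrow 0}s\Phi_{s,\varphi}(u-u_M)$ by your own upper bound, send $M\to\infty$ using that $\Phi_\varphi(u-u_M)\to 0$ and $\Phi_\varphi(u_M)\to\Phi_\varphi(u)$, and finally let $\delta\to 0$. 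Since this step is exactly what makes or breaks the lower bound, leaving it as ``passing to the limit $M\to\infty$'' is a genuine gap.
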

Here $p^-,p^+$ are given in \eqref{L}, $C_b$ is the Lebesgue measure of the unit ball and $2<\C\leq 2^{p^+}$ (see Definition \ref{delta2} for details). We remind that, even if $p^-=p^+$, which actually corresponds to the classical fractional Sobolev space, 
Theorem \ref{MS1} would not provide a complete generalization of \eqref{MSclassic}, see Remark \ref{rmkmin}.
Due to this reason, we are interested in providing a better estimate on the limit, paying the price of
a more restrictive assumption on the Orlicz function $\varphi$.
\begin{theorem}\label{MS2}
Let us assume that the Orlicz function $\varphi$ satisfies the following Minkowski-type inequality 
\begin{equation}\label{Minkowski}\tag{M}
    \varphi^{-1}\left(\int_\mathbb{G}\varphi(|u(t)|+|v(t)|)\, dt\right)\leq\varphi^{-1}\left(\int_\mathbb{G}\varphi(|u(t)|)\, dt\right)+\varphi^{-1}\left(\int_\mathbb{G}\varphi(|v(t)|)\, dt\right)
\end{equation}
for any $u,v\in L^{\varphi}(\mathbb{G})$, where $\varphi^{-1}$ is the inverse map of $\varphi$. 
Then, for any $u\in W^{s,\varphi}(\mathbb{G})$
\begin{equation*}
    2\dfrac{QC_b}{p^+}\Phi_\varphi(u)\leq\liminf_{s\downarrow 0}s\Phi_{s,\varphi}(u)\leq\limsup_{s\downarrow 0}s\Phi_{s,\varphi}(u)\leq2\dfrac{QC_b}{p^-}\Phi_\varphi(u).
\end{equation*}
In particular, if $\lim_{s\downarrow 0}s\Phi_{s,\varphi}(u)$ exists, then
\begin{align*}
    2\dfrac{QC_b}{p^+}\Phi_\varphi(u)\leq\lim_{s\downarrow 0}s\Phi_{s,\varphi}(u)\leq 2\dfrac{QC_b}{p^-}\Phi_\varphi(u).
\end{align*}
\end{theorem}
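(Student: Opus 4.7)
The approach is to first establish Theorem~\ref{MS2} for compactly supported $u$, where a pointwise identity automatically yields the constant $2$, and then use (M) to transfer to general $u\in W^{s,\varphi}(\mathbb{G})$ by density. Fix $u\in W^{s_0,\varphi}(\mathbb{G})$ for some $s_0\in(0,1)$ and split $\Phi_{s,\varphi}(u)=\Phi^{<1}_{s,\varphi}(u)+\Phi^{\geq 1}_{s,\varphi}(u)$ according to whether $\|y^{-1}\cdot x\|_\mathbb{G}<1$ or $\geq 1$. For $s<s_0$, the bound $\|y^{-1}\cdot x\|_\mathbb{G}^{-s}\leq\|y^{-1}\cdot x\|_\mathbb{G}^{-s_0}$ on $\{\|y^{-1}\cdot x\|_\mathbb{G}<1\}$ and monotonicity of $\varphi$ force $\Phi^{<1}_{s,\varphi}(u)\leq \Phi_{s_0,\varphi}(u)<\infty$, so $s\,\Phi^{<1}_{s,\varphi}(u)\to 0$ and only the far part needs analysis.

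\textbf{Compactly supported $u$.} Suppose $\supp u\subset B_\mathbb{G}(0,R_0)$ and, using the quasi-triangle inequality for $\|\cdot\|_\mathbb{G}$, pick $R>1$ large enough that $\|y^{-1}\cdot x\|_\mathbb{G}\geq R$ forces $u(x)u(y)=0$. On that far region, the pointwise identity
\begin{equation*}
\varphi\left(\frac{|u(x)-u(y)|}{\|y^{-1}\cdot x\|_\mathbb{G}^s}\right)=\varphi\left(\frac{|u(x)|}{\|y^{-1}\cdot x\|_\mathbb{G}^s}\right)+\varphi\left(\frac{|u(y)|}{\|y^{-1}\cdot x\|_\mathbb{G}^s}\right)
\end{equation*}
holds since one summand on the right always vanishes. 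Fubini, left-invariance of the Haar measure, the polar decomposition on $\mathbb{G}$ (with $\sigma(S_\mathbb{G})=QC_b$), and the substitution $\tau=|u(x)|/r^s$ reduce the $s$-rescaled far part to
\begin{equation*}
s\,\Phi^{\geq R}_{s,\varphi}(u)=2QC_b\int_\mathbb{G}\int_0^{|u(x)|/R^s}\frac{\varphi(\tau)}{\tau}\,d\tau\,dx.
\end{equation*}
The growth condition (L) yields $\varphi(T)/p^+\leq\int_0^T\varphi(\tau)/\tau\,d\tau\leq \varphi(T)/p^-$, and after verifying that the intermediate zone $1\leq\|y^{-1}\cdot x\|_\mathbb{G}<R$ contributes $o(1/s)$, dominated convergence ($R^s\to 1$ as $s\downarrow 0$) delivers the conclusion of Theorem~\ref{MS2} for compactly supported $u$.

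\textbf{General $u$ via (M).} Choose $u_k\in C_c^\infty(\mathbb{G})$ with $\Phi_\varphi(u-u_k)\to 0$. Since (M) is a purely structural property of $\varphi$, it extends to the $\sigma$-finite measure space $(\mathbb{G}\times\mathbb{G},\, dx\,dy/\|y^{-1}\cdot x\|_\mathbb{G}^Q)$; applied to the pointwise bound $|u(x)-u(y)|\leq|u_k(x)-u_k(y)|+|(u-u_k)(x)-(u-u_k)(y)|$ it yields
\begin{equation*}
\varphi^{-1}\bigl(\Phi_{s,\varphi}(u)\bigr)\leq\varphi^{-1}\bigl(\Phi_{s,\varphi}(u_k)\bigr)+\varphi^{-1}\bigl(\Phi_{s,\varphi}(u-u_k)\bigr).
\end{equation*}
Convexity of $\varphi$ and the (L)-consequence $\varphi(\mu t)\leq\mu^{p^+}\varphi(t)$ for $\mu\geq 1$ give $\varphi(a+b)\leq\lambda^{1-p^+}\varphi(a)+(1-\lambda)^{1-p^+}\varphi(b)$ for every $\lambda\in(0,1)$, whence
\begin{equation*}
s\,\Phi_{s,\varphi}(u)\leq\lambda^{1-p^+}\,s\,\Phi_{s,\varphi}(u_k)+(1-\lambda)^{1-p^+}\,s\,\Phi_{s,\varphi}(u-u_k).
\end{equation*}
Taking $\limsup_{s\downarrow 0}$, bounding the first term by the previous step and the second by a uniform estimate $\sup_{s\in(0,s_0)}s\,\Phi_{s,\varphi}(v)\leq C\,\Phi_\varphi(v)$ (which follows from the same Minkowski-type computation underlying Theorem~\ref{MS1}), and then choosing $\lambda=\lambda_k\to 1^-$ slowly enough (e.g.\ $1-\lambda_k=\Phi_\varphi(u-u_k)^{1/(2(p^+-1))}$) so that both $\lambda_k^{1-p^+}\to 1$ and $(1-\lambda_k)^{1-p^+}\Phi_\varphi(u-u_k)\to 0$ as $k\to\infty$, produces the sharp upper bound $\limsup_{s\downarrow 0} s\,\Phi_{s,\varphi}(u)\leq \tfrac{2QC_b}{p^-}\Phi_\varphi(u)$. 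The lower bound is obtained symmetrically from the reverse decomposition $|u_k(x)-u_k(y)|\leq|u(x)-u(y)|+|(u-u_k)(x)-(u-u_k)(y)|$.

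The crux of the proof is this last step: using (M) as a \emph{genuine} triangle inequality, rather than the much weaker $\Delta_2$-type expansion $\varphi(a+b)\leq\mathbf{C}(\varphi(a)+\varphi(b))$, and then carefully balancing the convexity parameter $\lambda_k$ against the modular approximation rate $\Phi_\varphi(u-u_k)\to 0$; this is precisely what upgrades the constants of Theorem~\ref{MS1} to the sharp constant $2$ of Theorem~\ref{MS2}.
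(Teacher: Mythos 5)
Your proof is correct and takes a genuinely different route from the paper's. Both proofs implicitly use (M) on the weighted product measure $dx\,dy/\|y^{-1}\cdot x\|_\mathbb{G}^Q$, but the structure is different.

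The paper's proof works directly on a general $u$: it reuses the two-region decomposition of $\mathbb{G}\times\mathbb{G}$ (comparing $\|y^{-1}\cdot x\|_\mathbb{G}$ with $2\|x\|_\mathbb{G}$, or splitting at $\|x\|_\mathbb{G}\lessgtr\tfrac12\|y\|_\mathbb{G}$) from Theorems~\ref{thliminf} and~\ref{thlimsup}, and replaces the $\Delta_2$-based inequality $\varphi(a+b)\leq\tfrac{\mathbf C}{2}(\varphi(a)+\varphi(b))$ with (M) wherever it appears; the sharp constant $2$ then falls out of the two symmetric contributions $u(x)$ and $u(y)$. Your proof instead first nails the compactly supported case by a pointwise identity: on $\{\|y^{-1}\cdot x\|_\mathbb{G}\geq R\}$ with $R\geq 2R_0$, one of $u(x)$, $u(y)$ vanishes, so $\varphi\big(\tfrac{|u(x)-u(y)|}{\|y^{-1}\cdot x\|_\mathbb{G}^s}\big)$ equals the \emph{exact} sum of the two terms and no Minkowski or $\Delta_2$ step is needed — (M) plays no role at all in this stage, and you even get that the limit $\lim_{s\downarrow 0}s\Phi_{s,\varphi}(u)=2QC_b\int_\mathbb{G}\int_0^{|u(x)|}\tfrac{\varphi(\tau)}{\tau}\,d\tau\,dx$ exists, which is slightly stronger than what the paper claims. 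You then invoke (M) only in the density step, as a genuine modular triangle inequality, and balance it against the convexity bound $\varphi(a+b)\leq\lambda^{1-p^+}\varphi(a)+(1-\lambda)^{1-p^+}\varphi(b)$ with $\lambda_k\to 1^-$. This isolates more clearly where the sharp constant comes from and where (M) is actually needed.

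Two points to tighten. First, your ``uniform estimate'' $\sup_{s\in(0,s_0)}s\,\Phi_{s,\varphi}(v)\leq C\,\Phi_\varphi(v)$ is not available and not what Theorem~\ref{thlimsup} yields: the near-diagonal part of $\Phi_{s,\varphi}(v)$ for $s$ close to $s_0$ is controlled by $\Phi_{s_0,\varphi}(v)$, not by $\Phi_\varphi(v)$. What you actually need — and what Theorem~\ref{thlimsup} does give you, since $v=u-u_k\in W^{s_0,\varphi}(\mathbb{G})$ — is only the asymptotic bound $\limsup_{s\downarrow 0}s\,\Phi_{s,\varphi}(v)\leq \mathbf{C}\tfrac{QC_b}{p^-}\Phi_\varphi(v)$, and the balancing argument works unchanged once you take $\limsup_{s\downarrow 0}$ before sending $k\to\infty$. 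Second, the step where (M), stated in the theorem for the Haar measure on $\mathbb{G}$, is transferred to the $\sigma$-finite measure $dx\,dy/\|y^{-1}\cdot x\|_\mathbb{G}^Q$ on $\mathbb{G}\times\mathbb{G}$ deserves an explicit sentence (two non-atomic $\sigma$-finite measure spaces of infinite total mass are measure-isomorphic); the paper makes this same implicit move, so it is not a defect of your argument specifically, but flagging it would make the proof self-contained.
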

We stress that the prototype $\varphi(t)=t^p$ trivially satisfies \eqref{Minkowski}.\\
As an immediate consequence, we obtain the following generalization of \eqref{MSclassic} in the framework of Carnot groups
\begin{cor}
If $u\in\bigcup_{s\in (0,1)}W^{s,\varphi}(\mathbb{G})$ with $\varphi(t)=t^p$, then $\lim_{s\downarrow 0}s\Phi_{s,\varphi}(u)$ exists and it holds that
\begin{equation}\label{Mazya}
    \lim_{s\downarrow 0}s\iint_{\mathbb{G}\times\mathbb
    {G}}\frac{|u(x)-u(y)|^p}{\|y^{-1}\cdot x\|_\mathbb{G}^{Q+sp}}\,dx\,dy=2\dfrac{QC_b}{p}\int_\mathbb{G}|u|^p\,dx.
\end{equation}
\end{cor}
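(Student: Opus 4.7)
The plan is to obtain the corollary as a direct specialization of Theorem \ref{MS2} to $\varphi(t)=t^p$, so the task reduces to verifying the two structural hypotheses of that theorem for this particular Young function, and then observing that the resulting upper and lower bounds actually collapse to a single value.

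First, I would check that $\varphi(t)=t^p$ fits into the framework of \eqref{cond.intro}. A direct computation gives $\varphi'(t)=pt^{p-1}$ and hence
\[
\frac{t\,\varphi'(t)}{\varphi(t)}=\frac{p\,t^{p}}{t^{p}}=p\qquad\text{for every } t>0,
\]
so $p^-=p^+=p$ for this choice. In particular $\varphi$ is admissible in Theorem \ref{MS2} and the constants $p^\pm$ appearing there coincide.

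Second, I would verify the Minkowski-type condition \eqref{Minkowski}. Since $\varphi^{-1}(r)=r^{1/p}$, the condition to check is
\[
\left(\int_\mathbb{G}(|u(t)|+|v(t)|)^p\,dt\right)^{\!1/p}\le\left(\int_\mathbb{G}|u(t)|^p\,dt\right)^{\!1/p}+\left(\int_\mathbb{G}|v(t)|^p\,dt\right)^{\!1/p},
\]
which is nothing but the classical Minkowski inequality in $L^p(\mathbb{G})$.

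With both hypotheses in place, Theorem \ref{MS2} applied to $u\in\bigcup_{s\in(0,1)}W^{s,\varphi}(\mathbb{G})$ yields
\[
2\,\frac{QC_b}{p}\,\Phi_\varphi(u)\le\liminf_{s\downarrow 0}s\,\Phi_{s,\varphi}(u)\le\limsup_{s\downarrow 0}s\,\Phi_{s,\varphi}(u)\le 2\,\frac{QC_b}{p}\,\Phi_\varphi(u),
\]
because the bounds involve $p^-$ and $p^+$, which here coincide. Consequently $\liminf$ and $\limsup$ are equal, so $\lim_{s\downarrow 0}s\,\Phi_{s,\varphi}(u)$ exists and equals $2QC_b\Phi_\varphi(u)/p$. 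Translating back via $\Phi_\varphi(u)=\int_\mathbb{G}|u|^p\,dx$ and $\Phi_{s,\varphi}(u)=\iint_{\mathbb{G}\times\mathbb{G}}|u(x)-u(y)|^p/\|y^{-1}\cdot x\|_\mathbb{G}^{Q+sp}\,dx\,dy$ produces \eqref{Mazya}.

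I do not anticipate any real obstacle: the whole point of formulating Theorem \ref{MS2} with the Minkowski condition \eqref{Minkowski} is precisely to make the prototype $\varphi(t)=t^p$ work out sharply, and the corollary amounts to recording this sharpness. The only minor subtlety is noting that the $p^-=p^+$ identity is what collapses the sandwich of Theorem \ref{MS2} to an honest limit.
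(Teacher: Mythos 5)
Your proposal is correct and is exactly what the paper intends: the corollary is stated as an immediate consequence of Theorem \ref{MS2}, and the paper itself notes that $\varphi(t)=t^p$ trivially satisfies \eqref{Minkowski}. You supply the two small verifications (that $p^-=p^+=p$ and that \eqref{Minkowski} reduces to the classical Minkowski inequality in $L^p$) that the paper leaves implicit, so the argument matches the authors' route.
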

We want to stress another interesting aspect: we are able to prove Theorem \ref{BBM}, Theorems \ref{MS1} and Theorem \ref{MS2} if we work with an homogeneous norm which satisfies the classical triangular inequality
\begin{equation*}
        \big|\|y\|_\mathbb{G}-\|x\|_\mathbb{G}\big|\leq\|y^{-1}\cdot x\|_\mathbb{G}\leq\|x\|_\mathbb{G}+\|y\|_\mathbb{G},
\end{equation*}
which is not in general true by a homogeneous norm. See Section \ref{section2} for more details and examples.
\medskip

Finally, we want to recall that the asymptotic behavior of the perimeter functional has also been addressed, 
either in Euclidean contexts as well as in Carnot settings, see e.g. \cite{AmbDepMart, CV, DiFiPaVa, FMPPS, Ludwig1, Ludwig2}.

\medskip

\indent The structure of the paper is the following: in Section 2, we provide the basic necessary notions of Carnot groups, Young functions and Orlicz spaces. In Section 3, the proof of Theorem \ref{BBM} is given and, finally, in Section 4, we conclude by proving Theorem \ref{MS1} and Theorem \ref{MS2}.
\section{Preliminaries}\label{section2}
\subsection{Carnot groups}
We start this section recalling the basic notions of Carnot groups.

A Carnot group $\mathbb{G}= (\mathbb{R}^n,\cdot)$ is a connected, simply connected 
and nilpotent Lie group, whose Lie algebra $\mathfrak{g}$ admits a stratification. Namely, there exist linear subspaces, usually called {\it layers}, such that
$$\mathfrak{g}=V_1\oplus..\oplus V_k, \quad [V_1,V_i]=V_{i+1}, \quad V_{k}\neq \{0\}, \quad V_i=\{0\} \, \textrm{if $i>k$},$$
\noindent where $k$ is usually called the {\it step} of the group $(\mathbb{G},\cdot)$ and
$$[V_i,V_j]:=\textrm{span}\left\{[X,Y]: \, X\in V_i,Y\in V_j\right\}.$$

The explicit expression of the group law $\cdot$ can be deduced from the Hausdorff-Campbell formula, see e.g. \cite{BLU}.
The group law can be used to define a diffeomorphism, usually called {\it left--translation} 
$\gamma_y : \mathbb{G} \to \mathbb{G}$ for every $y\in \mathbb{G}$, defined as
$$\gamma_y (x) := y \cdot x \quad \textrm{for every } x \in \mathbb{G}.$$
A Carnot group $\mathbb{G}$ is also endowed with a family of automorphisms of the group $\delta_\lambda:\mathbb{G}\to\mathbb{G}$, $\lambda\in\mathbb{R}^+$, called {\it dilations},
given by
\begin{equation*}
   \delta_\lambda(x_1,\ldots,x_n):=(\lambda^{d_1}x_1,..,\lambda^{d_n}x_n),
\end{equation*}
\noindent where $(x_1,\ldots, x_n)$ are the {\it exponential coordinates} of $x \in \mathbb{G}$,
$d_{j}\in \mathbb{N}$ for every $j=1,\ldots,n$ and $1=d_1=\ldots=d_m < d_{m+1} \leq \ldots \leq d_{n}$
for $m := {\rm dim}(V_1)$. Here the group $\mathbb{G}$ and the algebra $\mathfrak{g}$ are identified through the exponential mapping.
The $n$-dimensional Lebesgue measure $\mathcal{L}^{n}$ of $\mathbb{R}^n$ 
provides the Haar measure on $\mathbb{G}$, see e.g. \cite[Proposition 1.3.21]{BLU}.

It is customary to denote with $Q:=\sum_{i=1}^k i\ {\rm dim}(V_i)$ the homogeneous dimension of $\mathbb{G}$ 
which corresponds to the Hausdorff dimension of $\mathbb{G}$ (w.r.t. an appropriate sub--Riemannian distance, see below). 
This is generally greater than (or equal to) the topological dimension of $\mathbb{G}$ and it coincides with it
only when $\mathbb{G}$ is the Euclidean group $(\mathbb{R}^n,+)$, which is the only Abelian Carnot group.\\

Carnot groups are also naturally endowed with sub-Riemannian distances which make them interesting examples of metric spaces. 
A first well--known example of such metrics is provided by the Carnot-Carath\'eodory distance $d_{cc}$, see e.g. \cite[Definition 5.2.2]{BLU}, which
is a path--metric resembling the classical Riemannian distance.
In our case, we will work with metrics induced by homogeneous norms.
\begin{defn}
A homogeneous norm $\|\cdot\|_\mathbb{G}:\mathbb{G}\to\mathbb{R}^+_0$ is a continuous function with the following properties:
\begin{itemize}
    \item [$(i)$] $\|x\|_\mathbb{G}=0\ \textrm{if and only if } x=0$ for every $x\in\mathbb{G}$;
    \item [$(ii)$] $\|x^{-1}\|_\mathbb{G}=\|x\|_\mathbb{G}$ for every $x\in\mathbb{G}$;
    \item [$(iii)$] $\|\delta_\lambda x\|_\mathbb{G}=\lambda \|x\|_\mathbb{G}$ for every $\lambda\in\mathbb{R}^+$ and for every $x\in\mathbb{G}$.
\end{itemize}
\end{defn}
We remind that any homogeneous norm induces a left--invariant homogeneous distance by
\begin{equation*}
    d(x,y):=\|y^{-1}\cdot x\|_\mathbb{G}\quad\textrm{for every }x,y\in\mathbb{G}.
\end{equation*}
A concrete example of such kind of homogeneous distance is given by the Kor\'anyi distance, see e.g. \cite{Cygan}. From now on, we will write $B(x,\eps)$ to denote the ball of center $x\in\mathbb{G}$ and radius $\eps>0$ 
w.r.t the distances $d$.
\medskip

In the proceeding of the paper, however, we will ask for the following stronger hypothesis on the norm $\|\cdot\|_\mathbb{G}$:
\begin{itemize}
    \item[$(iv)$] invariance under horizontal rotations;
    \item[$(v)$] the validity of the classical triangular inequality
    \begin{equation*}
        \big|\|y\|_\mathbb{G}-\|x\|_\mathbb{G}\big|\leq\|y^{-1}\cdot x\|_\mathbb{G}\leq\|x\|_\mathbb{G}+\|y\|_\mathbb{G}.
    \end{equation*}
\end{itemize}
An example of such kind of norm, whose induced distance is equivalent to the well-known Carnot-Carath\'eodory distance, is given in \cite{FSSC,FSSC2}.

\begin{prop}
Let $\B{G}$ be a Carnot group and $f\in L^1(\B{G})$. Then the Haar measure on $\B{G}$
\begin{itemize}
\item[(i)] is invariant under left and right translations:
$$
\int_\mathbb{G} f(x)\,dx = \int_\mathbb{G} f(x\cdot y)\,dx =\int_\mathbb{G} f(y\cdot x)\,dx  \quad \forall y\in\B{G}
$$
\item[(ii)] scales under group dilations by the homogeneous dimension of $\B{G}$:
$$
\int_\mathbb{G} f(\delta_\lambda x)\,dx = \lambda^Q \int_\mathbb{G} f(x)\,dx \quad \forall \lambda>0.
$$ 
\end{itemize}
\end{prop}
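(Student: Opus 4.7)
Both assertions reduce to a change of variables in $\mathbb{R}^n$, so the strategy is simply to compute the Jacobian determinants of the maps $x \mapsto y \cdot x$, $x \mapsto x \cdot y$ and $x \mapsto \delta_\lambda(x)$ in exponential coordinates, and then apply the classical formula with respect to the Lebesgue measure $\mathcal{L}^n$ (which, by definition, plays the role of the Haar measure on $\mathbb{G}$).

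For part (i), I plan to use the Baker--Campbell--Hausdorff formula together with the stratification $\mathfrak{g} = V_1 \oplus \cdots \oplus V_k$. Ordering the coordinates by increasing homogeneous weight $1 = d_1 = \cdots = d_m < d_{m+1} \leq \cdots \leq d_n$, the structural point to verify is that the $j$-th component of $y \cdot x$ has the form
$$
(y \cdot x)_j = y_j + x_j + P_j(y;\, x_1, \ldots, x_{j-1}),
$$
where $P_j$ is a polynomial depending only on the $x$-coordinates of strictly lower weight. This follows because the BCH series terminates by nilpotency, and every iterated commutator contributing to layer $V_i$ must be built from layers of strictly smaller degree, since $[V_i, V_l] \subset V_{i+l}$. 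Consequently the Jacobian matrix $D_x(y \cdot x)$ is lower unitriangular and has determinant $1$. The same argument applied to $x \mapsto x \cdot y$ yields right invariance, and the change of variable formula in $\mathbb{R}^n$ then gives both identities in (i).

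For part (ii) the argument is more elementary: in exponential coordinates $\delta_\lambda$ is linear and diagonal with entries $\lambda^{d_1}, \ldots, \lambda^{d_n}$, so its Jacobian determinant equals $\prod_{j=1}^n \lambda^{d_j} = \lambda^{\sum_j d_j}$. Since $\sum_{j=1}^n d_j = \sum_{i=1}^k i\, \dim(V_i) = Q$, the scaling formula follows at once.

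The main obstacle is the structural statement behind (i): one must verify carefully that, in stratified exponential coordinates, the BCH expansion produces a Jacobian that is unitriangular with respect to the weight ordering. This combinatorial claim is what does the real work, and once it is in hand both translation identities follow as routine changes of variable. An alternative would be to quote \cite[Proposition 1.3.21]{BLU}, where exactly this Jacobian computation is recorded in detail; I would likely present my proof as a brief derivation of the unitriangular structure, followed by that reference for any remaining bookkeeping.
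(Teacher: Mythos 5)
Your proof is correct, and the argument you give — the lower unitriangular Jacobian of $x \mapsto y \cdot x$ coming from the graded BCH expansion, plus the diagonal Jacobian $\lambda^{\sum d_j} = \lambda^Q$ for $\delta_\lambda$ — is the standard one. However, there is nothing in the paper to compare it to: the authors state this proposition without proof, treating it as a preliminary fact that is standard in the Carnot-group literature (the earlier citation of \cite[Proposition 1.3.21]{BLU} for the identification of the Haar measure with $\mathcal{L}^n$ is the implicit reference here). Your argument is essentially what one finds spelled out in \cite{BLU}, and your own suggestion to derive the unitriangular structure briefly and then point to \cite{BLU} for the bookkeeping is exactly the level of detail appropriate for this kind of preliminary statement.
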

It trivially follows that $|B(x,r)|=r^Q|B|=r^Q C_b$ for all $x\in \B{G}$ and $r>0$, where $B=B(0,1)$ and $C_b$ denotes its Lebesgue measure.
\medskip

The following three Propositions will be very useful in the sequel.
\begin{prop}\cite[Proposition 1.13]{FS}\label{fs_1.13}
Let $f\in L^1_{\rm{loc}}(\mathbb{G}\setminus\{0\})$ be an homogeneous function of degree $-Q$, i.e., $f(\delta_\lambda x)=\lambda^{-Q}f(x)$. Then, there exists a constant $M_f$, mean value of $f$, such that
\begin{equation*}
    \int_\mathbb{G}f(x)g(\|x\|_\mathbb{G})\,dx=M_f\int_0^{+\infty}g(r)\frac{dr}{r}
\end{equation*}
for any $g\in L^1(\mathbb{R}^+,\frac{dr}{r})$.
\end{prop}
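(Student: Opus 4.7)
The plan is to establish a polar-coordinate decomposition of the Haar measure adapted to the homogeneous norm $\|\cdot\|_\mathbb{G}$, and then to exploit the $(-Q)$-homogeneity of $f$ to separate the radial and angular integrals.

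First I would introduce the unit sphere $S := \{y\in\mathbb{G} : \|y\|_\mathbb{G}=1\}$, which is compact since $\|\cdot\|_\mathbb{G}$ is continuous and homogeneous of degree $1$ with respect to $\delta_\lambda$, hence proper. Using properties $(i)$ and $(iii)$ of the homogeneous norm, the map $\Psi : (0,\infty)\times S \to \mathbb{G}\setminus\{0\}$ defined by $\Psi(r,y) := \delta_r(y)$ is a homeomorphism, with inverse $x\mapsto (\|x\|_\mathbb{G},\, \delta_{1/\|x\|_\mathbb{G}}(x))$.

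Next I would construct a Radon measure $\sigma$ on $S$ realising the decomposition $dx = r^{Q-1}\,dr\,d\sigma(y)$. Following the standard recipe (Folland--Stein), I would set, for Borel $E\subset S$,
\[
\sigma(E) := Q\cdot\bigl|\{\delta_r(y) : y\in E,\, 0<r<1\}\bigr|,
\]
and then check, first on characteristic functions of dilation-invariant annuli $\{a<\|x\|_\mathbb{G}<b\}$, the identity
\[
\int_\mathbb{G} h(x)\,dx = \int_0^\infty\!\!\int_S h(\delta_r y)\,d\sigma(y)\,r^{Q-1}\,dr.
\]
Both sides are invariant under the same dilation scaling (the right-hand side by construction, the left-hand side by part (ii) of the preceding Proposition), so they agree on a class generating the Borel $\sigma$-algebra of $\mathbb{G}\setminus\{0\}$; extension by monotone class then gives the identity for every nonnegative measurable $h$.

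With this in hand, the conclusion is immediate. Applying the decomposition to $h(x) := f(x)\,g(\|x\|_\mathbb{G})$, using $f(\delta_r y) = r^{-Q}f(y)$ and $\|\delta_r y\|_\mathbb{G} = r$, one gets
\[
\int_\mathbb{G} f(x)\,g(\|x\|_\mathbb{G})\,dx = \int_0^\infty g(r)\,r^{Q-1}\cdot r^{-Q}\left(\int_S f(y)\,d\sigma(y)\right)dr = M_f \int_0^\infty g(r)\,\frac{dr}{r},
\]
where $M_f := \int_S f(y)\,d\sigma(y)$. This constant is finite since $S$ is compact and contained in $\mathbb{G}\setminus\{0\}$, where $f$ is locally integrable, and Fubini applies because $g\in L^1(\mathbb{R}^+,dr/r)$. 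The main technical obstacle is the construction and verification of $\sigma$ in the second step; once the polar decomposition of the Haar measure is in place, the rest is a one-line homogeneity computation.
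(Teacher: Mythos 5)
The paper does not prove this Proposition; it simply cites \cite[Proposition 1.13]{FS}, so there is no in-text argument to compare against. Your route --- build the polar decomposition $dx = r^{Q-1}\,dr\,d\sigma(y)$ (which is precisely Proposition \ref{prop.fs}, cited separately in the paper as \cite[Proposition 1.15]{FS}) and then specialize to $h(x) = f(x)\,g(\|x\|_\mathbb{G})$ using $(-Q)$-homogeneity --- is correct and standard, but it reverses the order in Folland--Stein. There the present statement is proved first by a softer argument: the functional $T(g) := \int_\mathbb{G} f(x)\,g(\|x\|_\mathbb{G})\,dx$ is shown to be invariant under $g(\cdot)\mapsto g(\lambda\,\cdot)$, because the change of variables $x\mapsto\delta_{1/\lambda}x$ picks up $\lambda^{-Q}$ from the Haar measure which cancels the $\lambda^{Q}$ from homogeneity of $f$; hence $T$ is proportional to the unique dilation-invariant functional $\int_0^\infty g\,\tfrac{dr}{r}$, and the polar decomposition follows afterwards as a corollary. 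Your approach does more work (constructing $\sigma$ explicitly) but buys the closed formula $M_f = \int_S f\,d\sigma$.

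One step needs repair. The finiteness (indeed, the meaning) of $M_f = \int_S f\,d\sigma$ does not follow from compactness of $S$ together with $f\in L^1_{\mathrm{loc}}(\mathbb{G}\setminus\{0\})$: first, $f$ is defined only Lebesgue-a.e., and $S$ is a Lebesgue-null set, so the restriction $f|_S$ is not a priori meaningful; second, even for a genuine function, local Lebesgue integrability on an open set does not control the trace on a hypersurface inside it. The correct argument uses the polar decomposition you just proved. By Fubini, for $\sigma$-a.e.\ $y\in S$ the function $r\mapsto f(\delta_r y)$ is defined for a.e.\ $r$, and by homogeneity it equals $r^{-Q}F(y)$ for a $\sigma$-measurable $F$ (this $F$ is the honest meaning of ``$f$ on $S$''). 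Then
\[
\int_{\{1\le\|x\|_\mathbb{G}\le 2\}} |f(x)|\,dx \;=\; \int_1^2\!\int_S |f(\delta_r y)|\,d\sigma(y)\,r^{Q-1}\,dr \;=\; (\ln 2)\int_S |F(y)|\,d\sigma(y),
\]
and the left-hand side is finite since $f\in L^1_{\mathrm{loc}}(\mathbb{G}\setminus\{0\})$, so $M_f := \int_S F\,d\sigma$ is well defined and finite. With that emendation the proposal is complete.
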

As a consequence of the previous result, we are able to compute explicitly integrals on balls, of functions depending only on the distance from the center of the ball, in terms of integrals on the real line.
\begin{prop}\label{prop.radialfunct}
Let $f\in L^1(\mathbb{R}^+)$ and $R>0$. Then
\[
\int_{B(y,R)}f(\|y^{-1}\cdot x\|_\mathbb{G})\,dx=\int_{B(0,R)} f(\|x\|_\mathbb{G})\,dx=QC_b\int_0^R r^{Q-1}f(r)\,dr
\]
and
\[
\int_{\mathbb{G}\setminus B(y,R)}f(\|y^{-1}\cdot x\|_\mathbb{G})\,dx=\int_{\mathbb{G}\setminus B(0,R)} f(\|x\|_\mathbb{G})\,dx=QC_b\int_R^{+\infty} r^{Q-1}f(r)\,dr.
\]
\end{prop}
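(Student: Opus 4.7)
The plan is to handle the two equalities separately, then repeat the argument for the complement. The first equality in each line reduces to the left-invariance of the Haar measure. Concretely, applying the change of variables $x = y \cdot z$, which preserves the Lebesgue/Haar measure by part (i) of the preceding Proposition on translation invariance, maps $B(y,R)$ bijectively to $B(0,R)$ and transforms $\|y^{-1}\cdot x\|_\mathbb{G}$ into $\|z\|_\mathbb{G}$. The same substitution sends the complement $\mathbb{G}\setminus B(y,R)$ to $\mathbb{G}\setminus B(0,R)$, so both "first equalities" follow at once.

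For the second equality, the idea is to force the integrand into the form required by Proposition \ref{fs_1.13}. Choose the homogeneous function $h(x) := \|x\|_\mathbb{G}^{-Q}$, which satisfies $h(\delta_\lambda x) = \lambda^{-Q} h(x)$ by property (iii) of the homogeneous norm and is locally integrable away from the origin. Writing
\[
\int_{B(0,R)} f(\|x\|_\mathbb{G})\,dx = \int_{\mathbb{G}} h(x)\, g(\|x\|_\mathbb{G})\,dx
\qquad \text{with}\qquad g(r) := r^{Q}\chi_{(0,R]}(r)\, f(r),
\]
Proposition \ref{fs_1.13} then yields
\[
\int_{B(0,R)} f(\|x\|_\mathbb{G})\,dx = M_h \int_0^{+\infty} g(r)\,\frac{dr}{r} = M_h \int_0^R r^{Q-1} f(r)\,dr.
\]
The analogous computation with $g(r) = r^{Q}\chi_{(R,+\infty)}(r)\,f(r)$ covers the complement.

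It remains to identify the constant $M_h$, which is where I would close the argument. Testing the identity on the trivial choice $R=1$, $f \equiv 1$ makes the left-hand side equal to $|B(0,1)| = C_b$ (using the normalization stated right after the preceding Proposition), while the right-hand side becomes $M_h \int_0^1 r^{Q-1}\,dr = M_h/Q$. Hence $M_h = Q C_b$, and the formulas follow.

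The only subtle point, and the one I would be most careful about, is the integrability hypothesis needed to invoke Proposition \ref{fs_1.13}: one needs $g \in L^1(\mathbb{R}^+, dr/r)$, i.e.\ $r^{Q-1} f(r) \in L^1$ on the relevant interval. On $(0,R]$ this is automatic whenever $Q\geq 1$ and $f\in L^1(\mathbb{R}^+)$, but on $(R,+\infty)$ the factor $r^{Q-1}$ is unbounded and the statement must be read as an identity between (possibly infinite) nonnegative integrals, or under the tacit assumption that the left-hand side is finite. Once this measurability/integrability bookkeeping is done, the rest is just bookkeeping around the change of variables and the mean-value formula.
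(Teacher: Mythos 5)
Your proof is correct and follows essentially the same route as the paper's: both apply Proposition~\ref{fs_1.13} to the homogeneous function $\|x\|_\mathbb{G}^{-Q}$ together with $g(r)=r^{Q}\chi_{[0,R]}(r)f(r)$ (resp.\ $\chi_{[R,\infty)}$), and both pin down $M_{\|\cdot\|_\mathbb{G}^{-Q}}=QC_b$ by testing with $R=1$ and $f\equiv 1$. Your remarks on the translation-invariance step and the integrability caveat for the complement are reasonable additions that the paper leaves implicit.
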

\begin{proof}
At first, let us compute the constant $M_f$ for the function $f(x)=\|x\|_\mathbb{G}^{-Q}$.\\
By Proposition \ref{fs_1.13}, taking $g(\|x\|_\mathbb{G})=\|x\|_\mathbb{G}^Q\chi_{[0,1]}(\|x\|_\mathbb{G})$, we get
\begin{equation*}
    C_b=\int_B\,dx=\int_\mathbb{G}\|x\|_\mathbb{G}^{-Q}\|x\|_\mathbb{G}^Q\chi_{[0,1]}(\|x\|_\mathbb{G})\,dx=M_{\|x\|_\mathbb{G}^{-Q}}\int_0^1 r^{Q-1}\,dr=\frac{M_{\|x\|_\mathbb{G}^{-Q}}}{Q},
\end{equation*}
i.e., $M_{\|x\|_\mathbb{G}^{-Q}}=QC_b$.

Therefore, still by Proposition \ref{fs_1.13}, we have
\begin{align*}
    \int_{B(0,R)} f(\|x\|_\mathbb{G})\,dx&=\int_\mathbb{G}\|x\|_\mathbb{G}^{-Q}\|x\|_\mathbb{G}^Q\chi_{[0,R]}(\|x\|_\mathbb{G})f(\|x\|_\mathbb{G})\,dx\\
    &=M_{\|x\|_\mathbb{G}^{-Q}}\int_0^R r^{Q-1}f(r)\,dr=QC_b\int_0^R r^{Q-1}f(r)\,dr
\end{align*}
and
\begin{align*}
    \int_{\mathbb{G}\setminus B(0,R)} f(\|x\|_\mathbb{G})\,dx&=\int_\mathbb{G}\|x\|_\mathbb{G}^{-Q}\|x\|_\mathbb{G}^Q\chi_{[R,+\infty[}(\|x\|_\mathbb{G})f(\|x\|_\mathbb{G})\,dx\\
    &=QC_b\int_R^{+\infty} r^{Q-1}f(r)\,dr.
\end{align*}
\end{proof}
\begin{prop}\cite[Proposition 1.15]{FS}\label{prop.fs}
There exists a unique Radon measure $\sigma$ on $S$ such that for all $u\in L^1(\B{G})$
\[
\int_{\B{G}}u(x)dx = \int_0^{+\infty}\left(\int_S u(\delta_r z)r^{Q-1}\,d\sigma(z)\right)\,dr
\]
where $S$ is the unit sphere in $\mathbb{G}$.
\end{prop}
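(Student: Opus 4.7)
I would construct the angular measure $\sigma$ on the compact sphere $S$ via the Riesz representation theorem, using Proposition \ref{fs_1.13} as the source of the constants, then verify the integration formula first on a dense subalgebra of radial--angular product functions and finally extend to all of $L^1(\mathbb{G})$ by approximation.

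Every $x\in\mathbb{G}\setminus\{0\}$ admits a unique polar decomposition $x=\delta_{\|x\|_\mathbb{G}}\pi(x)$, where $\pi(x):=\delta_{1/\|x\|_\mathbb{G}}x\in S$ is continuous and satisfies $\pi(\delta_\lambda x)=\pi(x)$ for every $\lambda>0$. For $\phi\in C(S)$, the function $f_\phi(x):=\phi(\pi(x))\|x\|_\mathbb{G}^{-Q}$ is locally integrable on $\mathbb{G}\setminus\{0\}$ and homogeneous of degree $-Q$, so Proposition \ref{fs_1.13} supplies a constant $M(\phi):=M_{f_\phi}$ with
\[
\int_\mathbb{G}\phi(\pi(x))\,\frac{g(\|x\|_\mathbb{G})}{\|x\|_\mathbb{G}^Q}\,dx=M(\phi)\int_0^{+\infty}g(r)\,\frac{dr}{r}\qquad\forall\,g\in L^1\!\left(\mathbb{R}^+,\tfrac{dr}{r}\right).
\]
Choosing any non-negative $g$ with positive integral shows that $\phi\mapsto M(\phi)$ is a positive linear functional on $C(S)$, bounded by $M(1)\|\phi\|_\infty$. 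Compactness of $S$ and the Riesz representation theorem then yield a unique finite Radon measure $\sigma$ on $S$ with $M(\phi)=\int_S\phi\,d\sigma$.

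Next, for $u$ of product form $u(x)=\phi(\pi(x))\psi(\|x\|_\mathbb{G})$ with $\phi\in C(S)$ and $\psi\in C_c((0,+\infty))$, substituting $g(r)=\psi(r)r^Q$ in the identity above gives
\[
\int_\mathbb{G}u(x)\,dx=\left(\int_S\phi\,d\sigma\right)\int_0^{+\infty}\psi(r)r^{Q-1}\,dr;
\]
on the other hand, using $\pi(\delta_r z)=z$ and $\|\delta_r z\|_\mathbb{G}=r$ for $z\in S$ and $r>0$, the right-hand side of the statement reduces by Fubini to exactly the same product. The linear span of such products forms a subalgebra of $C_c(\mathbb{G}\setminus\{0\})$ that separates points and locally contains the constants, so a Stone--Weierstrass argument produces uniform density in $C_c(\mathbb{G}\setminus\{0\})$. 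Since $\{0\}$ is Lebesgue-null, a standard truncation/mollification step then propagates the identity to every $u\in L^1(\mathbb{G})$. Uniqueness of $\sigma$ is immediate: if $\sigma'$ also verifies the statement, testing against a product with $\int_0^{+\infty}\psi(r)r^{Q-1}dr\neq 0$ forces $\int_S\phi\,d\sigma'=\int_S\phi\,d\sigma$ for every $\phi\in C(S)$.

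The main technical obstacle I anticipate is the passage from product functions to an arbitrary $u\in L^1(\mathbb{G})$. One must justify joint measurability of $(r,z)\mapsto u(\delta_r z)$ with respect to $r^{Q-1}dr\otimes\sigma$ and absolute convergence of the iterated integral on the right-hand side. Both are handled by first applying the formula to $|u|$ and invoking Fubini--Tonelli; this delivers
\[
\int_0^{+\infty} r^{Q-1}\!\int_S |u(\delta_r z)|\,d\sigma(z)\,dr=\int_\mathbb{G}|u|\,dx<+\infty,
\]
after which the linear and continuous extension from the dense subalgebra to $L^1(\mathbb{G})$ is routine.
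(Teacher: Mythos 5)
The paper does not prove this proposition --- it is cited verbatim from Folland and Stein \cite[Proposition 1.15]{FS}, with Proposition \ref{fs_1.13} (also cited from the same source) as the ingredient. Your reconstruction is correct and in fact follows the same route as Folland--Stein: polar decomposition $x=\delta_{\|x\|_\mathbb{G}}\pi(x)$, the mean-value constant $M_{f_\phi}$ from Proposition \ref{fs_1.13} viewed as a positive bounded linear functional $\phi\mapsto M(\phi)$ on $C(S)$, Riesz representation to produce $\sigma$, verification on product functions, and a density/monotone-convergence extension to $L^1(\mathbb{G})$.

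Two small remarks that a fully detailed write-up should attend to, though they do not affect correctness. First, the density of the span of products $\phi(\pi(x))\psi(\|x\|_\mathbb{G})$ should be phrased as density in $C_0(\mathbb{G}\setminus\{0\})$ (or uniform density on compact annuli together with a radial cutoff in the algebra), since $C_c$ is not closed in the sup norm; the cutoff can itself be taken of the form $\chi(\|x\|_\mathbb{G})$ with $\chi\in C_c((0,\infty))$, which keeps the approximants inside the algebra. Second, the passage from $C_c(\mathbb{G}\setminus\{0\})$ to $L^1(\mathbb{G})$ is cleanest if one first extends to nonnegative measurable functions by monotone convergence (establishing in particular the absolute convergence you flag), and only then decomposes a general $u\in L^1$ as $u^+-u^-$; invoking $L^1$-continuity of the right-hand side directly is slightly circular before this step, exactly as you anticipate at the end.
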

We conclude this part recalling the notion of Pansu differentiability, given by Pansu in \cite{P}. We remind that, for any $u:\mathbb{G} \to \mathbb{R}$ sufficiently smooth, given $(X_1,..,X_m)$ a basis of the horizontal layer $V_1$, of left-invariant vector fields, then the {\it horizontal gradient} of $u:\mathbb{G} \to \mathbb{R}$ is defined as
$$\nabla_\mathbb{G}u:=\sum_{j=1}^m(X_j u)X_j=(X_1u,..,X_m u).$$
\begin{defn}
A function $f:\B{G}\to\B{R}$ is said to be \emph{Pansu differentiable} at $x\in\B{G}$ if there exists a $\B{G}-$linear map $L_x^f:\B{G}\to\B{R}$, named Pansu differential, such that
\begin{equation*}
\lim_{\|h\|_\mathbb{G}\to0}\frac{f(x\cdot h)-f(x)-L_x^f(h')}{\|h\|_\mathbb{G}}=0.
\end{equation*}
We will say that $f$ is Pansu differentiable in $\B{G}$ if it is Pansu differentiable at any $x\in\B{G}$.
\end{defn}
\begin{rmk}
Let us notice that the Pansu differential $L_x^f$ does not depend on the basis of the Lie algebra $\mathfrak{g}$. In the sequel, we will use the notation
\[
L_x^f(h') =  \nabla_\mathbb{G} f \cdot h'.
\]
As noted in \cite[Section 5]{FSSC} every function in $C^1(\B{G})$ is also Pansu differentiable and the inclusion is actually strict.
\end{rmk}
\subsection{Orlicz functions}
\begin{defn}Let $\phi:\mathbb{R}^+_0\to\mathbb{R}^+_0$ be a real valued function such that:
\begin{itemize} 
    \item[$(i)$] $\phi(0)=0$, and $\phi(t)>0$ for any $t>0$;
    \item[$(ii)$] $\phi$ is nondecreasing on $\mathbb{R}^+_0$;
    \item[$(iii)$] $\phi$ is right-continuous in $\mathbb{R}^+_0$ and $\lim_{t\to+\infty}\phi(t)=+\infty$.
\end{itemize}
Then, the real valued function defined on $\mathbb{R}^+_0$ by 
$$\varphi(t)=\int_0^t \phi(s)\,ds$$
is called Orlicz function (or Nice Young function).
\end{defn}
It is easy to show that hypothesis $(i)-(iii)$ imply that $\varphi$ is continuous, Locally Lipschitz continuous, strictly increasing and convex on $\mathbb{R}^+_0$. Moreover, $\varphi(0)=0$ and $\varphi$ is superlinear at zero and at infinity, i.e., 
$$\lim_{t\to 0^+}\frac{\varphi(t)}{t}=0 \qquad \lim_{t\to+\infty}\frac{\varphi(t)}{t}=+\infty.$$
Up to normalization, we can assume $\varphi(1)=1$. Hypothesis $(i)-(iii)$ also guarantee the existence of $\varphi^{-1}:\mathbb{R}^+_0\to\mathbb{R}^+_0$, which is continuous, concave and strictly increasing, with $\varphi^{-1}(0)=0$ and $\varphi^{-1}(1)=1$.

From now on, the following growth condition will be required on $\varphi$:
\begin{equation} \label{L} \tag{L}
p^-\leq \frac{t\,\phi(t)}{\varphi(t)} \leq p^+ \quad \forall t>0,
\end{equation}
where $p^-\leq p^+$ are positive constants grater than $1$. It holds that
\begin{align*}
  &s^{\overline{p}}\varphi(t)\leq\varphi(st)\leq s^{\tilde{p}}\varphi(t),\tag{$\varphi_1$}\label{G1}\\
  &\varphi(s+t)\leq\frac{2^{p^+}}{2}(\varphi(s)+\varphi(t)).\tag{$\varphi_2$}\label{G2}
 \end{align*}
for any $s,t\in\mathbb{R}^+_0$, where $s^{\tilde{p}}:=\max\{s^{p^-},s^{p^+}\}$ and $s^{\overline{p}}:=\min\{s^{p^-},s^{p^+}\}$.

Let us notice that $p^-=p^+$ if and only if $\phi(t)=t^p$, being $\varphi(1)=1$.
\\
We remind that the conjugate function of $\varphi$, defined as its Legendre's transform, is
\begin{align*}
    \varphi^*(s):=\sup_{t>0}\{ st-\varphi(t)\}.
\end{align*}
\begin{defn}\label{delta2}
The smallest $C\in\mathbb{R}^+$ such that the following $\Delta_2$-condition holds
\begin{align*}
\varphi(2t)\leq C\varphi(t)\quad \forall t\in\mathbb{R}^+_0,   
\end{align*}
is called the $\Delta_2$-constant and it is denoted by $\C$. By $\eqref{G2}$, we have that
\begin{equation}\label{doubling}
    2<\C\leq 2^{p^+}.
\end{equation}
It is not difficult to show that \eqref{L} is equivalent to require the $\Delta_2$-condition both on $\varphi$ and $\varphi^*$ (see for instance \cite[Chapter 4]{PKJF}).
\end{defn}
The following Lemma can be seen as an improvement of \eqref{G2}.
\begin{lem}\cite[Lemma 2.6]{BS}\label{lem.2.6}
Let $\varphi$ be an Orlicz function and let $s,t\in\mathbb{R}^+_0$. Then, for any $\delta>0$, there exists a positive constant $C_\delta$ such that
$$
\varphi(s+t)\leq C_\delta\varphi(s)+(1+\delta)^{p^+} \varphi(t).
$$
\end{lem}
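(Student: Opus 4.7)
The plan is to prove the inequality by a simple case split on the relative sizes of $s$ and $t$, using the monotonicity of $\varphi$ together with the upper growth estimate contained in \eqref{G1}, namely $\varphi(\lambda \tau) \leq \lambda^{p^+} \varphi(\tau)$ for every $\lambda \geq 1$ and $\tau \geq 0$.

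First I would fix $\delta>0$ and consider the case $s \leq \delta t$. In this regime, $s+t \leq (1+\delta)\,t$, so the monotonicity of $\varphi$ and \eqref{G1} (applied with $\lambda = 1+\delta \geq 1$) give
\begin{equation*}
\varphi(s+t) \leq \varphi\bigl((1+\delta)\,t\bigr) \leq (1+\delta)^{p^+}\varphi(t),
\end{equation*}
which is already the desired estimate (the term $C_\delta\varphi(s)$ is nonnegative and can simply be dropped).

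Next I would treat the complementary case $s > \delta t$, where $t < s/\delta$ and therefore $s+t < s\bigl(1+\tfrac{1}{\delta}\bigr)$. Again by monotonicity and \eqref{G1},
\begin{equation*}
\varphi(s+t) \leq \varphi\!\left(\Bigl(1+\tfrac{1}{\delta}\Bigr) s\right) \leq \Bigl(1+\tfrac{1}{\delta}\Bigr)^{p^+}\varphi(s).
\end{equation*}
Discarding the nonnegative summand $(1+\delta)^{p^+}\varphi(t)$ yields the claimed bound in this case as well.

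Combining both cases, the lemma holds with the explicit constant $C_\delta := \bigl(1+\tfrac{1}{\delta}\bigr)^{p^+}$. There is no substantial obstacle: the whole argument rests on the elementary observation that in an inequality of the form $\varphi(s+t) \leq C_\delta\varphi(s) + (1+\delta)^{p^+}\varphi(t)$ one of the two terms on the right is only ever needed in one of the two regimes ``$s$ small'' or ``$t$ small''. The only mildly subtle point is recognizing that the factor $(1+\delta)^{p^+}$ appears precisely because \eqref{G1} controls the dilation of $\varphi$ by a power determined by $p^+$; this is why the hypothesis \eqref{L} (equivalently, the $\Delta_2$-condition for $\varphi$) is essential.
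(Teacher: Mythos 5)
Your proof is correct. Note first that the paper at hand offers no proof of this lemma: it is imported directly from \cite[Lemma 2.6]{BS}, so there is no in-paper argument to compare against. Your case split on whether $s\leq\delta t$ or $s>\delta t$, combined with the monotonicity of $\varphi$ and the dilation bound $\varphi(\lambda\tau)\leq\lambda^{p^+}\varphi(\tau)$ for $\lambda\geq 1$ coming from \eqref{G1}, cleanly yields the inequality with the explicit constant $C_\delta=(1+1/\delta)^{p^+}$, and you correctly observe that in each regime only one of the two right-hand terms is actually needed. An alternative derivation, common in the Orlicz literature, goes through convexity: for $\lambda\in(0,1)$ Jensen's inequality gives $\varphi(s+t)\leq\lambda\,\varphi(s/\lambda)+(1-\lambda)\,\varphi\bigl(t/(1-\lambda)\bigr)$, and \eqref{G1} then turns this into $\lambda^{1-p^+}\varphi(s)+(1-\lambda)^{1-p^+}\varphi(t)$, after which one tunes $\lambda$ so that the second coefficient equals $(1+\delta)^{p^+}$. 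Your route avoids convexity entirely, relying only on monotonicity and the growth estimate; both approaches are elementary and produce constants of the same shape, so either is perfectly adequate for the uses the paper makes of the lemma.
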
 
We conclude this section recalling a fundamental definition which is the natural counterpart of \cite[Remark 2.15]{BS} in the context of Carnot groups. From now on, when necessary, a generic $z\in\B{G}$ will be denoted as $z=(z',z'')$ where $z'=(z_1,..,z_m)$ is the horizontal part and $z''=(z_{m+1},..,z_n)$ is the vertical one.
\begin{defn}
For an Orlicz function $\varphi$ and $t\in\mathbb{R}^+$, we define the bounded function
\begin{equation*}
    \tilde{\varphi}^+(t) \coloneqq \limsup_{s \uparrow 1}(1-s)\int_0^1\left(\int_{S} \varphi(t \|z'\|_{\mathbb{R}^m} r^{1-s})\,d\sigma(z)\right) \frac{dr}{r}.
\end{equation*}
A similar definition with $\liminf$ instead of $\limsup$ is used to define $\tilde{\varphi}^-$. When they coincide, we will define
\begin{equation}\label{eq.phitilde}
    \tilde{\varphi}(t) \coloneqq \lim_{s \uparrow 1}(1-s)\int_0^1\left(\int_{S} \varphi(t \|z'\|_{\mathbb{R}^m} r^{1-s})\,d\sigma(z)\right) \frac{dr}{r}.
\end{equation}
\end{defn}
\begin{prop}\label{2.16}
The functions $\tilde{\varphi}^\pm$ are still Orlicz functions, both of them equivalent to $\varphi$, i.e., there exist $c_1,c_2>0$ such that
$$c_1\varphi(t)\leq\tilde{\varphi}^\pm(t)\leq c_2\varphi(t)$$
for any $t\in\mathbb{R}^+$.
\end{prop}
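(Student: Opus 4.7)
The proof rests on a single algebraic observation: the substitution $u = r^{1-s}$ makes the integrand in the definition of $\tilde\varphi^\pm(t)$ independent of $s$, so in fact $\tilde\varphi^+ = \tilde\varphi^- = \tilde\varphi$ and the limit exists trivially. Once this is carried out, the equivalence $\tilde\varphi \asymp \varphi$ and the Orlicz-function properties of $\tilde\varphi$ reduce to direct consequences of \eqref{L} together with a mild non-degeneracy property of the spherical measure $\sigma$.

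More precisely, I would first apply Fubini (the integrand is non-negative) and change variables via $u = r^{1-s}$, using $du/u = (1-s)\,dr/r$, to obtain
\[
(1-s)\int_0^1\int_S \varphi(t\|z'\|_{\mathbb{R}^m} r^{1-s})\,d\sigma(z)\,\frac{dr}{r} = \int_S\int_0^1 \varphi(t\|z'\|_{\mathbb{R}^m} u)\,\frac{du}{u}\,d\sigma(z).
\]
The right-hand side is independent of $s$; a further substitution $w = t\|z'\|_{\mathbb{R}^m} u$ rewrites it as $\int_S \bigl(\int_0^{t\|z'\|_{\mathbb{R}^m}} \varphi(w)\,\tfrac{dw}{w}\bigr)\,d\sigma(z)$. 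Integrating the pointwise inequality $p^-\varphi(w)/w \leq \varphi'(w) \leq p^+\varphi(w)/w$ from \eqref{L} now yields
\[
\frac{\varphi(\alpha)}{p^+} \leq \int_0^\alpha \varphi(w)\,\frac{dw}{w} \leq \frac{\varphi(\alpha)}{p^-} \qquad \forall\, \alpha > 0,
\]
which reduces the equivalence $\tilde\varphi \asymp \varphi$ to showing that $\int_S \varphi(t\|z'\|_{\mathbb{R}^m})\,d\sigma(z) \asymp \varphi(t)$. The upper bound is immediate from the continuity of $z \mapsto \|z'\|_{\mathbb{R}^m}$ on the compact set $S$ (giving some $M>0$) combined with the growth estimate $\varphi(Mt) \leq \max(1,M^{p^+})\varphi(t)$ from \eqref{G1}.

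The lower bound is the only nontrivial step. I would argue by contradiction: if $\sigma(\{z \in S : z' \neq 0\}) = 0$, then applying Proposition \ref{prop.fs} to the characteristic function of $B(0,R) \cap \{x \in \mathbb{G} : x' \neq 0\}$ would force this set to have zero Haar measure, which is absurd since its complement in $B(0,R)$ is contained in the proper linear subspace $\{x' = 0\}$. Hence continuity of $z \mapsto \|z'\|_{\mathbb{R}^m}$ on compact $S$ supplies $\delta \in (0,1]$ and $A \subset S$ with $\sigma(A) > 0$ on which $\|z'\|_{\mathbb{R}^m} \geq \delta$, so that by \eqref{G1}
\[
\int_S \varphi(t\|z'\|_{\mathbb{R}^m})\,d\sigma(z) \geq \varphi(\delta t)\,\sigma(A) \geq \delta^{p^+}\,\sigma(A)\,\varphi(t),
\]
closing the equivalence. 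The remaining Orlicz-function properties of $\tilde\varphi$ (vanishing at the origin, strict monotonicity, convexity, and continuity) then follow directly from those of $\varphi$ through the explicit representation obtained in the first step, using linearity of the integrals for convexity and monotone/dominated convergence for continuity. The sphere non-degeneracy is really the only nonroutine ingredient.
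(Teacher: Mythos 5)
Your proposal is correct and takes a genuinely different and in fact more careful route than the paper's own argument. The decisive simplification is the substitution $u = r^{1-s}$, which converts $(1-s)\,dr/r$ to $du/u$ and so renders the integral independent of $s$; this immediately identifies $\tilde\varphi^+ = \tilde\varphi^-$ and removes the need to pass to $\limsup/\liminf$, whereas the paper simply appeals to \cite[Proposition 2.16]{BS} for the Orlicz-function properties. After that, you use the integrated form of \eqref{L} to sandwich $\int_0^\alpha \varphi(w)\,dw/w$ between $\varphi(\alpha)/p^+$ and $\varphi(\alpha)/p^-$, while the paper uses the pointwise form of \eqref{G1} on the factor $r^{1-s}$ and then computes $\int_0^1 r^{(1-s)p^\pm - 1}\,dr$ explicitly; these are close in spirit but yours organizes the estimates more transparently. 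The substantive difference is the lower bound on the spherical integral. The paper asserts $\int_S \|z'\|_{\mathbb{R}^m}^{p^+}\,d\sigma(z) \geq QC_b = \sigma(S)$, which cannot be right in a non-Abelian Carnot group: for a Korányi-type norm one has $\|z'\|_{\mathbb{R}^m} \leq 1$ on $S$ with strict inequality off a lower-dimensional set, so the integral is strictly smaller than $\sigma(S)$. Your non-degeneracy argument — using Proposition \ref{prop.fs} on the indicator of $B(0,R)\cap\{x'\neq 0\}$ to deduce $\sigma(\{z\in S: z'\neq 0\})>0$, then choosing $A=\{z : \|z'\|_{\mathbb{R}^m}\geq \delta\}$ of positive $\sigma$-measure — is exactly what is needed to make the lower bound genuinely valid, and it is the one nonroutine ingredient missing from the paper's proof. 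Your resulting constants $c_1, c_2$ differ from the paper's $QC_b/p^+$ and $QC_b/p^-$, but the proposition only asserts existence of positive constants, so this is harmless. One minor caveat: the convexity of $\tilde\varphi$ does not follow from "linearity of the integrals" alone; you should note that $\Psi(\alpha):=\int_0^\alpha\varphi(w)\,dw/w$ satisfies $\Psi''(\alpha)=(\alpha\varphi'(\alpha)-\varphi(\alpha))/\alpha^2>0$ precisely because $p^->1$ in \eqref{L}, and strict monotonicity of $\tilde\varphi$ again uses $\sigma(\{z'\neq 0\})>0$.
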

\begin{proof}
$\tilde{\varphi}^\pm$ are Orlicz functions by similar arguments of \cite[Proposition 2.16]{BS}. Moreover, by \eqref{G1}, we can notice that
\begin{equation*}
    \begin{split}
        \int_0^1\left(\int_{S} \varphi(t\|z'\|_{\mathbb{R}^m}r^{1-s})\,d\sigma(z)\right)\frac{dr}{r} &\leq \int_{S} \|z'\|_{\mathbb{R}^m}^{p^-} d\sigma(z) \int_0^1 \varphi(tr^{1-s})\frac{dr}{r}\\
        &\leq QC_b\varphi(t)\int_0^1 r^{(1-s)p^- -1}dr = \frac{QC_b}{(1-s)p^-}\varphi(t)
    \end{split}
\end{equation*}
and
\begin{equation*}
    \begin{split}
        \int_0^1\left(\int_{S} \varphi(t\|z'\|_{\mathbb{R}^m}r^{1-s})\,d\sigma(z)\right)\frac{dr}{r} &\geq \int_{S} \|z'\|_{\mathbb{R}^m}^{p^+} d\sigma(z) \int_0^1 \varphi(tr^{1-s})\frac{dr}{r}\\
        &\geq QC_b\varphi(t)\int_0^1 r^{(1-s)p^+ -1}dr = \frac{QC_b}{(1-s)p^+}\varphi(t).
    \end{split}
\end{equation*}
Thus, taking $c_1:=\frac{QC_b}{p^+}$ and $c_2:=\frac{QC_b}{p^-}$, we get the thesis.
\end{proof}
\begin{rmk}
Let us notice that $c_1=c_2=\frac{QC_b}{p}$ if and only if $\varphi(t)=t^p$. We also remind that explicit examples of $\tilde{\varphi}$, in the Euclidean case, are given in \cite[Example 2.17]{BS}.
\end{rmk}
\subsection{The functional setting}
\begin{defn}
Let $\mathbb{G}$ be a Carnot group, let $\varphi$ be an Orlicz function and let $0<s\leq 1$. We define, with a little abuse of notation, the Orlicz-Lebesgue space and the Fractional Orlicz-Sobolev spaces, respectively, as
\begin{align*}
L^{\varphi}(\mathbb{G})&\coloneqq\{u:\mathbb{G}\to\mathbb{R}\ \text{measurables such that}\ \Phi_{\varphi}(u)<\infty\}\\
W^{s,\varphi}(\mathbb{G})&\coloneqq\{u\in L^{\varphi}(\mathbb{G})\ \text{such that}\ \Phi_{s,\varphi}(u)<\infty\},
\end{align*}
where
\begin{align*}
\Phi_{\varphi}(u)&\coloneqq\int_{\mathbb{G}}\varphi(|u(x)|)\,dx,\\
\Phi_{s,\varphi}(u)&\coloneqq
\begin{cases}
\displaystyle{\iint_{\mathbb{G}\times\mathbb{G}}\varphi\left(\frac{|u(x)-u(y)|}{\|y^{-1}\cdot x\|_\mathbb{G}^s}\right)\,\frac{dx\,dy}{\|y^{-1}\cdot x\|_\mathbb{G}^Q}}&\qquad\text{if}\ 0<s<1\\
\displaystyle \Phi_\varphi(\|\nabla_\mathbb{G} u\|_{\mathbb{R}^m})&\qquad\text{if}\ s=1
\end{cases}.
\end{align*}
\end{defn}
These spaces are usually endowed with the so-called Luxemburg norms, studied by Luxemburg in \cite{L}, and defined as
\begin{align*}
\|u\|_{\varphi}&\coloneqq\inf\{\lambda>0:\Phi_{\varphi}\left(\frac{u}{\lambda}\right)\leq 1\}\\
\|u\|_{s,\varphi}&\coloneqq\|u\|_{\varphi}+[u]_{s,\varphi}
\end{align*}
where
$$[u]_{s,\varphi}\coloneqq\inf\{\lambda>0:\Phi_{s,\varphi}\left(\frac{u}{\lambda}\right)\leq 1\}$$
is the $(s,\varphi)-$Gagliardo seminorm.

By well-known results given in \cite{DHHR,HHK} for the Euclidean case, it is easy to characterize these spaces as follows
\begin{theorem}\label{2.10}
Let $\varphi$ be an Orlicz function, then $L^{\varphi}(\B{G})$ and $W^{1,\varphi}(\B{G})$ are separable Banach spaces. Moreover, if both $\varphi$ and $\varphi^*$ satisfy the $\Delta_2$-condition, then the spaces $L^{\varphi}(\B{G})$ and $W^{1,\varphi}(\B{G})$ are also reflexive and the dual space of $L^{\varphi}(\B{G})$ can be identified with $L^{\varphi^*}(\B{G})$. Finally, $C_c^{\infty}(\B{G})$ is dense in both $L^{\varphi}(\B{G})$ and $W^{1,\varphi}(\B{G})$.
\end{theorem}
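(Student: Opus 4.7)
The proof reduces to classical Orlicz theory on $\mathbb{R}^n$ because the Haar measure of $\mathbb{G}=(\mathbb{R}^n,\cdot)$ coincides with the Lebesgue measure $\mathcal{L}^n$. Hence, as a set, $L^{\varphi}(\mathbb{G})$ is identical to the Euclidean Orlicz space associated with $\varphi$, and both the modular $\Phi_\varphi$ and the Luxemburg norm $\|\cdot\|_\varphi$ ignore the group structure entirely. The cited results in \cite{DHHR,HHK} thus immediately yield that $L^{\varphi}(\mathbb{G})$ is a separable Banach space; that under the joint $\Delta_2$-condition on $\varphi$ and $\varphi^*$ it is reflexive with dual $L^{\varphi^*}(\mathbb{G})$; and that $C_c^\infty(\mathbb{G})=C_c^\infty(\mathbb{R}^n)$ is dense in $L^{\varphi}(\mathbb{G})$.

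For $W^{1,\varphi}(\mathbb{G})$ I would first prove completeness via the standard argument: given a Cauchy sequence $\{u_k\}$ with respect to the norm $\|u\|_\varphi+\|\,\|\nabla_\mathbb{G} u\|_{\mathbb{R}^m}\|_\varphi$, completeness of $L^\varphi(\mathbb{G})$ produces limits $u$ and $v=(v_1,\ldots,v_m)$, and one verifies that $v_j=X_j u$ in the distributional sense using integration by parts for left-invariant vector fields together with density of $C_c^\infty$ in the predual $L^{\varphi^*}$ under $\Delta_2$. Separability and reflexivity then follow from the isometric embedding
\[
T\colon W^{1,\varphi}(\mathbb{G})\to L^{\varphi}(\mathbb{G})\times L^{\varphi}(\mathbb{G};\mathbb{R}^m),\qquad T(u)=(u,\nabla_\mathbb{G} u),
\]
whose image is closed by the same distributional argument, combined with the analogous properties of the target product space.

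For density of $C_c^\infty(\mathbb{G})$ in $W^{1,\varphi}(\mathbb{G})$ I would combine truncation and mollification by group convolution. Fix smooth cutoffs $\eta_R\in C_c^\infty(\mathbb{G})$ with $\eta_R\equiv 1$ on $B(0,R)$ and horizontal gradient uniformly bounded; the product $\eta_R u$ approximates $u$ in $W^{1,\varphi}$ by dominated convergence for $\Phi_\varphi$ together with the $\Delta_2$-condition. Then take a nonnegative mollifier $\rho\in C_c^\infty(\mathbb{G})$ with $\int\rho=1$, set $\rho_\varepsilon(x)=\varepsilon^{-Q}\rho(\delta_{1/\varepsilon}x)$, and define the left group convolution
\[
(u*\rho_\varepsilon)(x)=\int_\mathbb{G} u(y)\,\rho_\varepsilon(y^{-1}\cdot x)\,dy.
\]
Since the vector fields $X_j$ are left-invariant, one has $X_j(u*\rho_\varepsilon)=(X_j u)*\rho_\varepsilon$, and then the $\Delta_2$-condition together with continuity of left translations on $L^\varphi$ yields $u*\rho_\varepsilon\to u$ and $X_j(u*\rho_\varepsilon)\to X_j u$ in $L^\varphi(\mathbb{G})$ as $\varepsilon\downarrow 0$.

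The main obstacle I expect is precisely this last density step: one must regularize in a way that commutes with the horizontal derivatives $X_j$ rather than with the Euclidean partials, and then use $\Delta_2$ to upgrade convergence in $L^1_{\mathrm{loc}}$ (where classical mollification theory lives) to convergence of the modulars $\Phi_\varphi$. Everything else is a formal transplantation of the Euclidean Orlicz theory to $(\mathbb{R}^n,\mathcal{L}^n,\cdot)$.
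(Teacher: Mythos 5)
Your overall route is the same as the paper's, which simply declares that the statement ``trivially follows from the Euclidean case'' and cites \cite[Theorem 2.3.13, Theorem 2.5.10]{DHHR} and \cite[Theorem 5.3, Theorem 5.5, Corollary 3.7, Corollary 3.9]{HHK} without writing anything out; your proposal is therefore considerably more informative than what the paper offers, and you correctly pinpoint the only non-formal ingredient, namely that the regularization must commute with the horizontal derivatives $X_j$. The reduction of the $L^\varphi(\mathbb G)$ statements to the Euclidean Orlicz theory via the identification of Haar measure with $\mathcal L^n$, the closed-graph/product-embedding argument for completeness and reflexivity of $W^{1,\varphi}(\mathbb G)$, and the truncation step are all sound.

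However, your mollification formula has the convolution on the wrong side. You set
\[
(u*\rho_\varepsilon)(x)=\int_{\mathbb G} u(y)\,\rho_\varepsilon(y^{-1}\cdot x)\,dy=\int_{\mathbb G} u(x\cdot z^{-1})\,\rho_\varepsilon(z)\,dz,
\]
and then $X_j$ (being left-invariant) acts as $\frac{d}{dt}\big|_{t=0}u\bigl(x\exp(tX_j)z^{-1}\bigr)$ on the integrand, which on a non-abelian group is not $(X_j u)(x z^{-1})$; with your convention one gets $X_j(u*\rho_\varepsilon)=u*(X_j\rho_\varepsilon)$, not $(X_j u)*\rho_\varepsilon$. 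The convention that does commute with left-invariant fields is the one in the paper's Definition \ref{def_moll}:
\[
(u*\rho_\varepsilon)(x)=\int_{\mathbb G} u(y)\,\rho_\varepsilon(x\cdot y^{-1})\,dy=\int_{\mathbb G} u(y^{-1}\cdot x)\,\rho_\varepsilon(y)\,dy,
\]
for which $u(y^{-1}\cdot x)=(u\circ L_{y^{-1}})(x)$ and left-invariance of $X_j$ give $X_j(u*\rho_\varepsilon)=(X_j u)*\rho_\varepsilon$ at once. Once you switch to this convolution, the rest of your density argument (continuity of left translations on $L^\varphi$ under $\Delta_2$, convergence of modulars, etc.) goes through as stated.
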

The proof of Theorem \ref{2.10} trivially follows from the Euclidean case. The reader can see from instance \cite[Theorem 2.3.13, Theorem 2.5.10]{DHHR} and \cite[Theorem 5.3, Theorem 5.5, Corollary 3.7, Corollary 3.9]{HHK}, where a more general theory is treated.

\medskip
Following the same technique of \cite[Proposition 2.11]{BS}, we can also state the following Theorem.
\begin{theorem}
Let us assume the same hypothesis of the previous Theorem. Then, for each $s\in (0,1)$, the space $W^{s,\varphi}(\B{G})$ is a reflexive and separable Banach space. Moreover, $C_c^{\infty}(\B{G})$ is dense in $W^{s,\varphi}(\B{G})$.
\end{theorem}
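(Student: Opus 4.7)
The plan is to imitate the Euclidean argument of Fernández Bonder and Salort, adapting each step to the Carnot group setting by exploiting the group-invariant Haar measure, left translations, and group convolution. The proof splits naturally into three parts: Banach structure, reflexivity/separability, and density.

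For the Banach structure and for reflexivity and separability, the cleanest route is via an isometric embedding. Define
\[
T\colon W^{s,\varphi}(\mathbb{G})\to L^{\varphi}(\mathbb{G})\times L^{\varphi}\bigl(\mathbb{G}\times\mathbb{G},d\mu_s\bigr),\qquad T(u)=(u,D_s u),
\]
where $D_s u(x,y)=\frac{u(x)-u(y)}{\|y^{-1}\cdot x\|_{\mathbb{G}}^{s}}$ and $d\mu_s=\frac{dx\,dy}{\|y^{-1}\cdot x\|_{\mathbb{G}}^{Q}}$. By construction $T$ is a linear isometry if the product is equipped with the sum of the Luxemburg norms, and the definitions of $\Phi_\varphi$ and $\Phi_{s,\varphi}$ show that a Cauchy sequence in $W^{s,\varphi}(\mathbb{G})$ produces a Cauchy sequence in the product; passing to a subsequence converging a.e.\ and using Fatou's lemma on the modulars (all justified by the $\Delta_2$-condition) identifies the limit as the image under $T$ of the $L^\varphi$-limit of $(u_n)$. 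Hence $T(W^{s,\varphi}(\mathbb{G}))$ is a closed subspace of a reflexive and separable space, which by Theorem \ref{2.10} applied to both the Haar measure on $\mathbb{G}$ and to the $\sigma$-finite measure $\mu_s$ on $\mathbb{G}\times\mathbb{G}\setminus\{x=y\}$ gives immediately that $W^{s,\varphi}(\mathbb{G})$ is a reflexive and separable Banach space.

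For the density of $C_c^\infty(\mathbb{G})$, I would proceed in three stages, mimicking \cite[Proposition 2.11]{BS}. First, given $u\in W^{s,\varphi}(\mathbb{G})$, truncate by $u_M:=\max\{-M,\min\{M,u\}\}$: the pointwise inequality $|u_M(x)-u_M(y)|\le|u(x)-u(y)|$ together with dominated convergence in the modulars and the $\Delta_2$-condition yield $u_M\to u$ in $W^{s,\varphi}(\mathbb{G})$. Second, multiply by a cutoff $\chi_R\in C_c^\infty(\mathbb{G})$ with $\chi_R\equiv 1$ on $B(0,R)$ and $\mathrm{supp}(\chi_R)\subset B(0,2R)$; the bounded, compactly supported approximations $u_{M,R}=u_M\chi_R$ converge to $u_M$ in $L^\varphi$ by dominated convergence, while for the Gagliardo piece one splits $\mathbb{G}\times\mathbb{G}$ into the regions where both points lie near the support and where at least one is far, estimates each via Lemma \ref{lem.2.6} and Proposition \ref{prop.radialfunct}, and uses the triangular inequality (item (v) of the assumed norm) to reduce the far-field integrals to Euclidean-type tail estimates. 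Third, convolve with a left-invariant group mollifier $\rho_\varepsilon$: left-invariance of the Haar measure ensures $u_{M,R}\ast\rho_\varepsilon\in C_c^\infty(\mathbb{G})$, continuity of left translation in $L^\varphi(\mathbb{G})$ (which follows from the $\Delta_2$-condition and density of continuous compactly supported functions in $L^\varphi$) yields $L^\varphi$-convergence, and a change of variables $\xi=y^{-1}\cdot x$ combined with Jensen's inequality shows that the Gagliardo modular also converges, again via the $\Delta_2$-condition and dominated convergence.

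The main obstacle I expect is the Gagliardo part of the second and third steps, precisely because the group convolution does not commute with the homogeneous distance the way Euclidean convolution does with $|x-y|$. The key technical device for overcoming this is the triangular inequality $|\|y\|_\mathbb{G}-\|x\|_\mathbb{G}|\le\|y^{-1}\cdot x\|_\mathbb{G}\le\|x\|_\mathbb{G}+\|y\|_\mathbb{G}$ assumed on $\|\cdot\|_\mathbb{G}$, which replaces the Euclidean triangle inequality in every place where the difference of distances is bounded; with this in hand, every estimate in \cite[Proposition 2.11]{BS} transfers essentially verbatim, with Lebesgue measure replaced by the Haar measure and Euclidean convolution replaced by group convolution.
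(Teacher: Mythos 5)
Your proposal is correct and is precisely the route the paper intends: the paper itself gives no details here, simply invoking "the same technique of \cite[Proposition 2.11]{BS}," and your outline faithfully reconstructs that argument in the Carnot setting — isometric embedding $T(u)=(u,D_su)$ into $L^\varphi(\mathbb{G})\times L^\varphi(\mathbb{G}\times\mathbb{G},d\mu_s)$ with a Fatou/a.e.-subsequence argument to show the image is closed, then truncation, spatial cutoff and group mollification for density, with the assumed triangular inequality on $\|\cdot\|_\mathbb{G}$ replacing the Euclidean one in the tail estimates. The added range-truncation stage $u\mapsto u_M$ is a sound (and standard) preliminary step: the pointwise bound $|(u_M-u)(x)-(u_M-u)(y)|\le 2|u(x)-u(y)|$ together with $\Delta_2$ and dominated convergence gives $\Phi_{s,\varphi}(u_M-u)\to 0$, after which your cutoff and mollification steps match what the paper's Definitions \ref{def_moll}--\ref{def_trun} and Lemmas \ref{2.13}--\ref{2.14} are set up to handle.
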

As in the Euclidean case, the immersion of the space $W^{s,\varphi}(\B{G})$ into $L^\varphi(\mathbb{G})$ is compact, as a consequence of the following
\begin{theorem}\cite[Theorem 11.4]{KR}\label{11.4}
    Any sequence of functions $\{v_k\}_k\subset L^{\varphi}(\mathbb{G})$ is compact if and only if the following two conditions are satisfied:
    \begin{itemize}
        \item[$(i)$] $\Phi_\varphi(v_k)$ is bounded;
        \item[$(ii)$] for any $\varepsilon>0$ there exists $\delta>0$ such that $\Phi_\varphi(\tau_h v_k-v_k)<\varepsilon$ for any $h\in\mathbb{G}$ such that $\|h\|_\mathbb{G}<\delta$, where $\tau_h u(x)\coloneqq u(x\cdot h)$ for any $x\in\mathbb{G}$.
    \end{itemize}
\end{theorem}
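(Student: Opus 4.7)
\emph{Approach.} The plan is to mimic the classical Fr\'echet--Kolmogorov--M.~Riesz argument in its modular form, replacing the $L^p$-norm by the modular $\Phi_\varphi$ and using the $\Delta_2$-condition built into \eqref{L} (cf.\ Definition~\ref{delta2}) to pass freely between modular control and Luxemburg-norm control. The two implications are handled separately.

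\emph{Necessity.} If $\{v_k\}$ is relatively compact in $L^\varphi(\mathbb{G})$, it is norm-bounded, and $\Delta_2$ gives $\sup_k\Phi_\varphi(v_k)<\infty$, hence (i). Translation is continuous in $L^\varphi(\mathbb{G})$: it is plainly so on $C_c^\infty(\mathbb{G})$ by uniform continuity, and extends by density (Theorem~\ref{2.10}) to the whole space, so $\Phi_\varphi(\tau_h w -w)\to 0$ as $\|h\|_\mathbb{G}\to 0$ for each fixed $w$. Picking a finite $\varepsilon$-net $\{w_1,\dots,w_N\}\subset L^\varphi(\mathbb{G})$ for $\{v_k\}$ and splitting
\[
\Phi_\varphi(\tau_h v_k - v_k) \le \Phi_\varphi\bigl(\tau_h(v_k-w_{j(k)}) - (v_k-w_{j(k)})\bigr) + \Phi_\varphi(\tau_h w_{j(k)} - w_{j(k)})
\]
via Lemma~\ref{lem.2.6} upgrades pointwise control to uniform-in-$k$ control, yielding (ii).

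\emph{Sufficiency.} Fix a left mollifier $\rho_\delta\in C_c^\infty(\mathbb{G})$ with $\rho_\delta\ge 0$, $\mathrm{supp}(\rho_\delta)\subset B(0,\delta)$, $\int_\mathbb{G}\rho_\delta=1$, and set $(v_k*\rho_\delta)(x):=\int_\mathbb{G} v_k(x\cdot h^{-1})\rho_\delta(h)\,dh$. Writing $v_k(x)=\int\rho_\delta(h)v_k(x)\,dh$ and applying Jensen's inequality to the convex $\varphi$ together with Fubini gives
\[
\Phi_\varphi(v_k*\rho_\delta - v_k) \le \int_\mathbb{G}\rho_\delta(h)\,\Phi_\varphi(\tau_{h^{-1}} v_k - v_k)\,dh,
\]
which by (ii) and $\|h^{-1}\|_\mathbb{G}=\|h\|_\mathbb{G}$ is smaller than any prescribed $\varepsilon$, uniformly in $k$, once $\delta$ is small. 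For fixed $\delta$, condition (i) and the smoothness of $\rho_\delta$ render $\{v_k*\rho_\delta\}_k$ uniformly bounded and equicontinuous on every ball $B(0,R)$, so Arzel\`a--Ascoli extracts a subsequence uniformly convergent on $B(0,R)$; a Cantor diagonal across $R\uparrow\infty$ and $\delta\downarrow 0$ then produces a subsequence of $\{v_k\}$ that is Cauchy in $L^\varphi(\mathbb{G})$.

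\emph{Main obstacle.} The delicate point is the passage from uniform convergence on balls to convergence in $L^\varphi(\mathbb{G})$: since $\mathbb{G}$ is unbounded, mass can in principle escape to infinity, and the cited statement must be read together with an (implicit) tightness requirement, or one has to show that condition (ii), combined with the triangle inequality~$(v)$ of Section~\ref{section2}, already controls tails. A second, lower-order technicality is the careful bookkeeping of left versus right translations on the noncommutative group $\mathbb{G}$ when verifying that $\tau_{h^{-1}} v_k - v_k$ and $\tau_h v_k - v_k$ play symmetric roles in the modular estimate; this is precisely where property~$(v)$ of the homogeneous norm and the left/right invariance of the Haar measure become essential.
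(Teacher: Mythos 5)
The paper offers no argument here: the statement is quoted verbatim as a citation to Krasnosel'skii--Rutickii \cite[Theorem 11.4]{KR}, which in that source is proved for a \emph{bounded} domain. Your proposal reconstructs the modular Fr\'echet--Kolmogorov--Riesz argument, which is the standard and correct route on a bounded domain; the necessity direction and the Jensen/Fubini step in the sufficiency direction are fine in outline. However, the ``main obstacle'' you flag at the end is not a lower-order technicality; it is a genuine gap, and the escape route you suggest (that (ii) together with the triangle inequality $(v)$ might already control the tails) is not available.

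Indeed, conditions (i) and (ii) alone do not forbid mass escaping to infinity on the unbounded group $\mathbb{G}$. Take any nontrivial $v\in C_c^\infty(\mathbb{G})$, a sequence $g_k\in\mathbb{G}$ with $\|g_k\|_\mathbb{G}\to\infty$, and set $v_k(x):=v(g_k^{-1}\cdot x)$. Left-invariance of the Haar measure gives, for every $k$ and $h$,
\[
\Phi_\varphi(v_k)=\Phi_\varphi(v),
\qquad
\Phi_\varphi(\tau_h v_k-v_k)=\int_\mathbb{G}\varphi\bigl(|v(y\cdot h)-v(y)|\bigr)\,dy=\Phi_\varphi(\tau_h v-v),
\]
so (i) and (ii) hold uniformly in $k$. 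Yet $\{v_k\}$ has no subsequence converging in $L^\varphi(\mathbb{G})$: any $L^\varphi$-limit would have to be $0$ because the supports $g_k\cdot\supp(v)$ run off to infinity, while $\Phi_\varphi(v_k)\equiv\Phi_\varphi(v)>0$ cannot converge to $\Phi_\varphi(0)=0$ under the $\Delta_2$-condition. Hence a third hypothesis of modular tightness,
\[
\lim_{R\to\infty}\ \sup_k\int_{\{\|x\|_\mathbb{G}>R\}}\varphi(|v_k(x)|)\,dx=0,
\]
is indispensable for the sufficiency direction on all of $\mathbb{G}$, and no bookkeeping with left/right translations or with property $(v)$ of the homogeneous norm can substitute for it. This caveat also affects the paper's application of the statement in Theorem~\ref{comp}, where the sequence is only assumed bounded in $W^{s,\varphi}(\mathbb{G})$.
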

\begin{theorem}\label{comp}
Let $0<s<1$ and $\varphi$ be an Orlicz function. Then, from every bounded sequence $\{u_n\}_n\subset W^{s,\varphi}(\B{G})$, there exist $u\in W^{s,\varphi}(\B{G})$ and $\{u_{n_k}\}_k\subset\{u_n\}_n$ such that $u_{n_k}\to u$ in $L^{\varphi}(\B{G})$.
\end{theorem}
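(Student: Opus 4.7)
The strategy is to verify the two conditions of the Orlicz Kolmogorov--Riesz compactness criterion (Theorem \ref{11.4}) for the bounded sequence $\{u_n\} \subset W^{s,\varphi}(\mathbb{G})$. Boundedness in $W^{s,\varphi}$ together with the growth condition \eqref{L}, which forces $\Delta_2$ on both $\varphi$ and $\varphi^*$, gives uniform bounds on both modulars $\Phi_\varphi(u_n) \leq M_1$ and $\Phi_{s,\varphi}(u_n) \leq M_2$. Condition (i) of Theorem \ref{11.4} is then immediate.

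For condition (ii), the target is the pointwise-in-$h$ estimate
\[
\Phi_\varphi(\tau_h u - u) \leq C \, \|h\|_\mathbb{G}^{s p^-}\, \Phi_{s,\varphi}(u), \qquad \|h\|_\mathbb{G} \leq 1,
\]
valid for every $u \in W^{s,\varphi}(\mathbb{G})$. Applied to each $u_n$, the uniform bound on $\Phi_{s,\varphi}(u_n)$ and $\|h\|_\mathbb{G}^{s p^-} \to 0$ yield the equi-continuity required in Theorem \ref{11.4}, producing a subsequence that converges in $L^\varphi(\mathbb{G})$.

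To prove the key estimate I would proceed pointwise: by \eqref{G2} (or Lemma \ref{lem.2.6}), for every $y \in \mathbb{G}$,
\[
\varphi(|u(x\cdot h) - u(x)|) \leq C\bigl[\varphi(|u(x\cdot h) - u(y)|) + \varphi(|u(y) - u(x)|)\bigr].
\]
I would then average this inequality over $y \in B(x,\|h\|_\mathbb{G})$. The stronger triangle inequality $(v)$ — precisely the place where the extra hypothesis on the homogeneous norm enters — gives $\|y^{-1}\cdot x\|_\mathbb{G} \leq \|h\|_\mathbb{G}$ and $\|y^{-1}\cdot(x\cdot h)\|_\mathbb{G} \leq 2\|h\|_\mathbb{G}$ on this ball. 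Applying the growth property \eqref{G1} with $\lambda = \|y^{-1}\cdot(x\cdot h)\|_\mathbb{G}^s \leq 1$, I would convert each term into the Gagliardo-type density, e.g.
\[
\varphi(|u(x\cdot h) - u(y)|) \leq (2\|h\|_\mathbb{G})^{s p^- + Q}\, \varphi\!\left(\frac{|u(x\cdot h) - u(y)|}{\|y^{-1}\cdot(x\cdot h)\|_\mathbb{G}^s}\right)\frac{1}{\|y^{-1}\cdot(x\cdot h)\|_\mathbb{G}^Q},
\]
using $\|y^{-1}\cdot(x\cdot h)\|_\mathbb{G}^{-Q} \geq (2\|h\|_\mathbb{G})^{-Q}$ to introduce the weight; the second term is handled analogously.

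Integrating in $x \in \mathbb{G}$, applying Fubini, and changing variable $z = x \cdot h$ (which is licit by left and right invariance of the Haar measure recorded earlier), together with enlarging the $y$-region of integration to all of $\mathbb{G}$, bounds both contributions by a constant multiple of $\|h\|_\mathbb{G}^{s p^-}\,\Phi_{s,\varphi}(u)$, which is the desired estimate. The main technical obstacle I expect is the bookkeeping of the change of variables and Fubini in the non-abelian setting while keeping the Gagliardo weight explicit, and the precise exploitation of the triangle inequality $(v)$ to ensure that the shifted pair $(x\cdot h, y)$ lies in a region where the weight can be absorbed into the full seminorm $\Phi_{s,\varphi}(u)$.
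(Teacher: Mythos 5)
Your proposal follows essentially the same route as the paper: verify the two conditions of the Kolmogorov--Riesz criterion (Theorem \ref{11.4}), where condition (i) is immediate and condition (ii) is reduced to the estimate $\Phi_\varphi(\tau_h u - u) \leq C\|h\|_\mathbb{G}^{sp^-}\Phi_{s,\varphi}(u)$, proved exactly as in the paper by splitting with the $\Delta_2$-condition, averaging over $y \in B(x,\|h\|_\mathbb{G})$, invoking the triangle inequality $(v)$ to control $\|y^{-1}\cdot x\cdot h\|_\mathbb{G} \leq 2\|h\|_\mathbb{G}$, applying \eqref{G1}, and reassembling via Fubini and a change of variables into $\Phi_{s,\varphi}(u)$.

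One step you leave out: after extracting a subsequence $u_{n_k} \to u$ in $L^\varphi(\mathbb{G})$, the theorem also claims $u \in W^{s,\varphi}(\mathbb{G})$, not merely $u \in L^\varphi(\mathbb{G})$. This requires a closing argument — in the paper it is Fatou's Lemma applied to the Gagliardo modular, using continuity of $\varphi$ and pointwise a.e.\ convergence of a further subsequence to conclude $\Phi_{s,\varphi}(u) \leq \liminf_k \Phi_{s,\varphi}(u_{n_k}) < \infty$. It is a short step but should be stated; otherwise the conclusion as written is not fully established.
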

\begin{proof}
Let us fix $u\in W^{s,\varphi}(\B{G})$. In order to apply Theorem \ref{11.4}, we want to show the existence of a constant $M>0$ such that
\begin{equation}\label{fkt}
    \Phi_{\varphi}(\tau_hu - u) \leq M\|h\|_\mathbb{G}^{sp^-}\Phi_{s,\varphi}(u)
\end{equation}
for every $h\in\mathbb{G}$ such that $\|h\|_\mathbb{G}<\frac{1}{2}$.

For any $y\in B(x,\|h\|_\mathbb{G})$, by the monotonicity of $\varphi$, the $\Delta_2$-condition and being $|B(x,r)|=r^Q|B|=r^Q C_b$, we have
\begin{equation}\label{eq.lemma3.2}
    \begin{split}
        \Phi_{\varphi}(\tau_h u-u)&=\int_\mathbb{G}\varphi(|u(x\cdot h)-u(y)+u(y)-u(x)|)\,dx\\
        &\leq\frac{\C}{2}\left[\int_\mathbb{G}\varphi(|u(x\cdot h)-u(y)|)\,dx+\int_\mathbb{G}\varphi(|u(y)-u(x)|)\,dx\right]\\
        &=\frac{\C}{2C_b\|h\|_\mathbb{G}^Q}\int_{B(x,\|h\|_\mathbb{G})}\left(\int_\mathbb{G}\varphi(|u(x\cdot h)-u(y)|)\,dx\right)\,dy\\
        &\quad+\frac{\C}{2C_b\|h\|_\mathbb{G}^Q}\int_{B(x,\|h\|_\mathbb{G})}\left(\int_\mathbb{G}\varphi(|u(y)-u(x)|)\,dx\right)\,dy\\
        &=\frac{\C}{2C_b\|h\|_\mathbb{G}^Q}(I_1 + I_2).
    \end{split}
\end{equation}
Let us notice that, by the triangular inequality, $$\|y^{-1}\cdot x\cdot h\|_\mathbb{G}\leq\|y^{-1}\cdot x\|_\mathbb{G}+\|h\|_\mathbb{G}\leq 2\|h\|_\mathbb{G}.$$
Therefore, by \eqref{G1}, the monotonicity of $\varphi$ and a change of variables, we have
\begin{equation*}
    \begin{split}
        I_1&=\int_{B(x,\|h\|_\mathbb{G})}\left(\int_\mathbb{G}\varphi\left(\frac{|u(x\cdot h)-u(y)|}{\|y^{-1}\cdot x\cdot h\|_\mathbb{G}^s}\|y^{-1}\cdot x\cdot h\|_\mathbb{G}^s\right)\frac{\|y^{-1}\cdot x\cdot h\|_\mathbb{G}^Q}{\|y^{-1}\cdot x\cdot h\|_\mathbb{G}^Q}\,dx\right)\,dy\\
        &\leq2^Q\|h\|_\mathbb{G}^Q\int_{B(x,\|h\|_\mathbb{G})}\left(\int_\mathbb{G}\varphi\left(\frac{|u(x\cdot h)-u(y)|}{\|y^{-1}\cdot x\cdot h\|_\mathbb{G}^s}(2\|h\|_\mathbb{G})^s\right)\frac{dx}{\|y^{-1}\cdot x\cdot h\|_\mathbb{G}^Q}\right)\,dy\\
        &\leq2^{sp^-+Q}\|h\|_\mathbb{G}^{sp^-+Q}\int_{B(x,\|h\|_\mathbb{G})}\left(\int_\mathbb{G}\varphi\left(\frac{|u(z)-u(y)|}{\|y^{-1}\cdot z\|_\mathbb{G}^s}\right)\frac{dz}{\|y^{-1}\cdot z\|_\mathbb{G}^Q}\right)\,dy\\
        &\leq2^{sp^-+Q}\|h\|_\mathbb{G}^{sp^-+Q}\Phi_{s,\varphi}(u).
    \end{split}
\end{equation*}
Similarly
$$I_2\leq\|h\|_\mathbb{G}^{sp^-+Q}\Phi_{s,\varphi}(u).$$
Thus, by \eqref{eq.lemma3.2}, we finally have
$$\Phi_{\varphi}(\tau_h u-u)\leq\frac{\C}{2C_b}(2^{sp^-+Q}+1)\|h\|_\mathbb{G}^{sp^-}\Phi_{s,\varphi}(u):=M\|h\|_\mathbb{G}^{sp^-}\Phi_{s,\varphi}(u).$$
\medskip

Let now $\{u_n\}_n\subset W^{s,\varphi}(\B{G})$ be a bounded sequence in $W^{s,\varphi}(\B{G})$. In particular, $\{u_n\}_n$ is bounded in $L^{\varphi}(\B{G})$. Therefore, by \eqref{fkt}
\[
\sup_{n\in\B{N}}\Phi_\varphi(\tau_h u_n - u_n)\leq \sup_{n\in\B{N}}(\Phi_{s,\varphi}(u_n) + \Phi_{\varphi}(u_n))M\|h\|_\mathbb{G}^{sp^-}.
\]

Thus, by Theorem \ref{11.4}, there exist $u\in L^{\varphi}(\B{G})$ and $\{u_{n_k}\}_k\subset\{u_n\}_n$ such that $u_{n_k}\to u$ in $L^{\varphi}(\B{G})$.
In order to conclude the proof, we show that $u\in W^{s,\varphi}(\B{G})$.

By the Fatou's Lemma and the continuity of $\varphi$, we have
\begin{align*}
    \Phi_{s,\varphi}(u)&=\iint_{\mathbb{G}\times\mathbb{G}}\varphi\left(\frac{|u(x)-u(y)|}{\|y^{-1}\cdot x\|_\mathbb{G}^s}\right)\frac{dx\,dy}{\|y^{-1}\cdot x\|_\mathbb{G}^Q}\\
    &\leq\liminf_{k\to\infty} \iint_{\mathbb{G}\times\mathbb{G}}\varphi\left( \frac{|u_{n_k}(x)-u_{n_k}(y)|}{\|y^{-1}\cdot x\|_\mathbb{G}^s}\right)\frac{dx\,dy}{\|y^{-1}\cdot x\|_\mathbb{G}^Q}\\
    &\leq\sup_{n\in\B{N}}\Phi_{s,\varphi}(u_{n_k})<\infty.
\end{align*}
\end{proof}
Taking into account \cite[Definition 5.3.6]{BLU}, we state below the notions of convolution and truncation on Carnot groups.
\begin{defn}\label{def_moll}
Let $\rho\in C^{\infty}_c(\B{G};\mathbb{R}_0^+)$ be the standard mollifier, that is, $\supp(\rho)\subset B(0,1)$ and $\int_\mathbb{G}\rho(x)\,dx = 1$. Then, for any $u\in L^{\varphi}(\B{G})$ and $x\in\B{G}$, considering the sequence of mollifiers $$\rho_{\eps}(x)\coloneqq\eps^{-Q}\rho(\delta_{\eps^{-1}}x)\qquad \varepsilon>0,$$ 
we define the regularized functions of $u$, $\{u_\varepsilon\}_\varepsilon\subset L^\varphi(\mathbb{G})\cap C^\infty(\mathbb{G})$, as
\begin{equation*}
    \begin{split}
        u_{\eps}(x)\coloneqq(u*\rho_{\eps})(x)&\coloneqq\int_\mathbb{G}u(y)\rho_\eps(x\cdot y^{-1})\,dy=\int_{B(0,\varepsilon)}u(y^{-1}\cdot x)\rho_{\eps}(y)\,dy\\
        &=\int_{B(0,1)}u((\delta_\eps z)^{-1}\cdot x)\rho(z)\,dz
    \end{split}
\end{equation*}
for any $\varepsilon>0$.
\end{defn}
\begin{defn}\label{def_trun}
Given $\eta \in C^{\infty}_c(\B{G})$ such that $0\leq \eta\leq 1$, $\eta = 1$ in $B(0,1)$, $\supp(\eta)\subset B(0,2)$ and $\|\nabla_\mathbb{G}\eta\|_{\mathbb{R}^m}\leq 2$, we define the cut-off functions
$$\eta_k(x)\coloneqq \eta(\delta_{k^{-1}}x)$$
for any $k\in\B{N}$. Let us notice that $0\leq \eta_k\leq 1$, $\eta_k = 1$ in $B(0,k)$, $\supp(\eta_k)\subset B(0,2k)$ and $\|\nabla_\mathbb{G}\eta_k\|_{\mathbb{R}^m}\leq \frac{2}{k}$.

For any $u\in L^{\varphi}(\B{G})$ we define the truncated functions of $u$, $\{u_k\}_k$, as
\[
u_k \coloneqq \eta_k u
\]
for any $k\in\mathbb{N}$. We remind that $supp(u_k)\subset B(0,2k)$.
\end{defn}
The two following Lemmas will be useful in the next section.
\begin{lem}\label{2.13}
Let $u\in L^\varphi(\mathbb{G})$ and let $\{u_\varepsilon\}_\varepsilon$ be a sequence of regularized functions of $u$, in the sense of Definition \ref{def_moll}. Then
\begin{equation*}
\Phi_{s,\varphi}(u_\varepsilon)\leq\Phi_{s,\varphi}(u)
\end{equation*}
for any $\varepsilon>0$ and $0<s<1$.
\end{lem}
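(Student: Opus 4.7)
The plan is to exploit the convexity of $\varphi$ via Jensen's inequality, together with the fact that $\rho_\varepsilon(z)\,dz$ is a probability measure and that the Haar measure on $\B{G}$ is left-invariant. Concretely, I would use the third form of the convolution given in Definition \ref{def_moll}, namely
\[
u_\varepsilon(x) = \int_{B(0,1)} u((\delta_\varepsilon z)^{-1}\cdot x)\,\rho(z)\,dz,
\]
so that $u_\varepsilon(x)-u_\varepsilon(y)$ is the $\rho(z)\,dz$-average of the translated differences $u((\delta_\varepsilon z)^{-1}\cdot x)-u((\delta_\varepsilon z)^{-1}\cdot y)$.

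First I would pass the absolute value inside the integral by the triangle inequality and divide by $\|y^{-1}\cdot x\|_\mathbb{G}^s$, which is independent of $z$. Since $\varphi$ is nondecreasing and convex and $\rho(z)\,dz$ has unit mass on $B(0,1)$, Jensen's inequality yields the pointwise bound
\[
\varphi\!\left(\frac{|u_\varepsilon(x)-u_\varepsilon(y)|}{\|y^{-1}\cdot x\|_\mathbb{G}^s}\right) \leq \int_{B(0,1)} \varphi\!\left(\frac{|u((\delta_\varepsilon z)^{-1}\cdot x)-u((\delta_\varepsilon z)^{-1}\cdot y)|}{\|y^{-1}\cdot x\|_\mathbb{G}^s}\right)\rho(z)\,dz.
\]

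Next I would integrate this inequality against $\frac{dx\,dy}{\|y^{-1}\cdot x\|_\mathbb{G}^{Q}}$ on $\mathbb{G}\times\mathbb{G}$ and apply Fubini. For each fixed $z$, perform the change of variables $x'=(\delta_\varepsilon z)^{-1}\cdot x$, $y'=(\delta_\varepsilon z)^{-1}\cdot y$. The crucial algebraic identity is
\[
y'^{-1}\cdot x' = \bigl((\delta_\varepsilon z)^{-1}\cdot y\bigr)^{-1}\cdot (\delta_\varepsilon z)^{-1}\cdot x = y^{-1}\cdot x,
\]
so the weight $\|y^{-1}\cdot x\|_\mathbb{G}^{-Q-s\cdot 0}$ and the power $\|y^{-1}\cdot x\|_\mathbb{G}^{s}$ inside $\varphi$ are both preserved; combined with the left-invariance of the Haar measure on $\B{G}$ this gives
\[
\iint_{\mathbb{G}\times\mathbb{G}} \varphi\!\left(\frac{|u((\delta_\varepsilon z)^{-1}\cdot x)-u((\delta_\varepsilon z)^{-1}\cdot y)|}{\|y^{-1}\cdot x\|_\mathbb{G}^s}\right)\frac{dx\,dy}{\|y^{-1}\cdot x\|_\mathbb{G}^{Q}} = \Phi_{s,\varphi}(u)
\]
independently of $z$. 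Integrating against $\rho(z)\,dz$ (which has unit mass) then gives the claim $\Phi_{s,\varphi}(u_\varepsilon)\leq \Phi_{s,\varphi}(u)$.

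There is no genuine obstacle here; the only delicate point is making sure the change of variables really preserves both the ``distance'' $\|y^{-1}\cdot x\|_\mathbb{G}$ and the product Haar measure, which is exactly where left-translation invariance of $\B{G}$ is used. Everything else is Jensen plus Fubini, and the inequality $\Phi_\varphi(u_\varepsilon)\leq \Phi_\varphi(u)$ (not needed here but analogous) would follow by the same argument without the $\|y^{-1}\cdot x\|_\mathbb{G}$ weight.
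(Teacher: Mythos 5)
Your proof is correct and takes essentially the same route as the paper's: Jensen's inequality applied to the convolution (viewed as an average against a probability measure), followed by Fubini and the left-invariance of the Haar measure, which makes the inner double integral independent of the mollification variable. The only cosmetic differences are that you use the $\rho(z)\,dz$ parametrization on $B(0,1)$ and keep the pair $(x,y)$, whereas the paper uses $\rho_\varepsilon$ on $B(0,\varepsilon)$ and substitutes $h=y^{-1}\cdot x$; the underlying argument is identical.
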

\begin{proof}
Let $x,y\in\mathbb{G}$ and let $h=y^{-1}\cdot x$. Then, by the Jensen's inequality and the monotonicity of $\varphi$, we have
\begin{equation*}
    \begin{split}
        \varphi\left(\frac{|u_{\eps}(x\cdot h) - u_{\eps}(x)|}{\|h\|_\mathbb{G}^s}\right)&\leq \,\varphi\left( \int_{B(0,\varepsilon)}\frac{|u(y^{-1}\cdot x\cdot h)-u(y^{-1}\cdot x)|}{\|h\|_\mathbb{G}^s}\rho_{\eps}(y)\,dy \right)\\
        &\leq\int_{B(0,\varepsilon)}\varphi\left(\frac{|u(y^{-1}\cdot x\cdot h)-u(y^{-1}\cdot x)|}{\|h\|_\mathbb{G}^s}\right)\rho_{\eps}(y)\,dy. 
    \end{split}
\end{equation*}
Therefore, by the invariance of the norm under translations, we have
\begin{equation*}\label{eq.mollifiers}
    \begin{split}
        &\int_\mathbb{G}\varphi\left( \frac{|u_{\eps}(x\cdot h) - u_{\eps}(x)|}{\|h\|_\mathbb{G}^s} \right)\frac{dx}{\|h\|_\mathbb{G}^Q}\\
        &\leq\int_\mathbb{G}\left(\int_{B(0,\varepsilon)} \varphi\left(\frac{|u(y^{-1}\cdot x\cdot h)-u(y^{-1}\cdot x)|}{\|h\|_\mathbb{G}^s} \right)\rho_{\eps}(y)\,dy  \right)\frac{dx}{\|h\|_\mathbb{G}^Q}\\
        &=\int_\mathbb{G}\left(\int_\mathbb{G} \varphi \left(\frac{|u(y^{-1}\cdot x\cdot h)-u(y^{-1}\cdot x)|}{\|h\|_\mathbb{G}^s} \right)\frac{dx}{\|h\|_\mathbb{G}^Q} \right)\rho_{\eps}(y)\,dy\\
        &=\int_\mathbb{G}\varphi\left(\frac{|u(x\cdot h)-u(x)|}{\|h\|_\mathbb{G}^s} \right) \frac{dx}{\|h\|_\mathbb{G}^Q}.
    \end{split}
\end{equation*}
Thus, the thesis follows, by integrating in $\mathbb{G}$ with respect to $h$.
\end{proof}
\begin{lem}\label{2.14}
Let $u\in L^{\varphi}(\B{G})$ and let $\{u_k\}_k$ the sequence of truncated functions of $u$, in the sense of Definition \ref{def_trun}. Then
\[
\Phi_{s,\varphi}(u_k) \leq \frac{\C}{2}\left(\Phi_{s,\varphi}(u) + \left(\frac{2}{k}\right)^{p^-}\frac{QC_b}{(1-s)p^+}\Phi_\varphi(u)+2^{p^+}\frac{QC_b}{sp^-}\Phi_\varphi(u)\right)
\]
for any $k\in\B{N}$ and $0<s<1$.
\end{lem}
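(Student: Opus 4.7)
The strategy is to control $u_k$ by writing its increment via a discrete Leibniz rule, splitting off the contribution of the cut-off, and then separating near and far interactions. First I would use the identity
\begin{equation*}
u_k(x)-u_k(y)=\eta_k(x)\bigl(u(x)-u(y)\bigr)+u(y)\bigl(\eta_k(x)-\eta_k(y)\bigr),
\end{equation*}
divide by $\|y^{-1}\cdot x\|_\mathbb{G}^s$, and apply $\varphi$ together with the $\Delta_2$-condition of Definition \ref{delta2}, which gives
\begin{equation*}
\varphi\!\left(\frac{|u_k(x)-u_k(y)|}{\|y^{-1}\cdot x\|_\mathbb{G}^s}\right)\leq \frac{\C}{2}\!\left[\varphi\!\left(\frac{\eta_k(x)|u(x)-u(y)|}{\|y^{-1}\cdot x\|_\mathbb{G}^s}\right)+\varphi\!\left(\frac{|u(y)||\eta_k(x)-\eta_k(y)|}{\|y^{-1}\cdot x\|_\mathbb{G}^s}\right)\right].
\end{equation*}
Integrating against $dx\,dy/\|y^{-1}\cdot x\|_\mathbb{G}^Q$ leaves two summands; since $0\leq\eta_k\leq 1$ and $\varphi$ is nondecreasing, the first is immediately bounded by $\frac{\C}{2}\Phi_{s,\varphi}(u)$, matching the first term of the claimed inequality.

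To handle the second summand, I would fix $y$ and split the $x$-integration at $\|y^{-1}\cdot x\|_\mathbb{G}=1$. On the near region the plan is to invoke the sub-Riemannian mean-value estimate $|\eta_k(x)-\eta_k(y)|\leq\frac{2}{k}\|y^{-1}\cdot x\|_\mathbb{G}$, coming from $\|\nabla_\mathbb{G}\eta_k\|_{\mathbb{R}^m}\leq 2/k$ via integration along a horizontal curve; then apply \eqref{G1} to pull the scaling factor $\frac{2\|y^{-1}\cdot x\|_\mathbb{G}^{1-s}}{k}<1$ out of $\varphi$, and use Proposition \ref{prop.radialfunct} to evaluate the radial integral $QC_b\int_0^1 r^{(1-s)p-1}\,dr$, with $p\in\{p^-,p^+\}$ selected according to which side of \eqref{G1} is applied. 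After integrating in $y$, this produces the middle term $\bigl(\tfrac{2}{k}\bigr)^{p^-}\tfrac{QC_b}{(1-s)p^+}\Phi_\varphi(u)$. On the far region I would use the crude bound $|\eta_k(x)-\eta_k(y)|\leq 2$, and combine \eqref{G1} applied to the scaling $\|y^{-1}\cdot x\|_\mathbb{G}^{-s}<1$ with the $\Delta_2$-estimate $\varphi(2t)\leq 2^{p^+}\varphi(t)$ to obtain
\begin{equation*}
\varphi\!\left(\frac{2|u(y)|}{\|y^{-1}\cdot x\|_\mathbb{G}^s}\right)\leq \frac{2^{p^+}\varphi(|u(y)|)}{\|y^{-1}\cdot x\|_\mathbb{G}^{sp^-}};
\end{equation*}
Proposition \ref{prop.radialfunct} then converts the $x$-integral into $QC_b\int_1^{+\infty}r^{-sp^--1}\,dr=\tfrac{QC_b}{sp^-}$, delivering the last term of the stated bound.

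The main obstacle is not the algebra of $\varphi$ -- which, once \eqref{L}, \eqref{G1} and the $\Delta_2$-condition are at hand, is a matter of careful bookkeeping of $\C$ and of the exponents $p^\pm$ -- but the sub-Riemannian mean-value step that turns the bound on $\|\nabla_\mathbb{G}\eta_k\|_{\mathbb{R}^m}$ into a genuine Lipschitz estimate in the homogeneous norm. That step is precisely where hypothesis (v) on $\|\cdot\|_\mathbb{G}$ becomes essential: it guarantees the equivalence of $\|\cdot\|_\mathbb{G}$ with the Carnot--Carath\'eodory distance, along whose horizontal curves $\nabla_\mathbb{G}\eta_k$ genuinely controls the oscillation of $\eta_k$. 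Everything else reduces to radial computations via Proposition \ref{prop.radialfunct} and bookkeeping of constants.
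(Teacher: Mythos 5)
Your proposal follows essentially the same route as the paper: the discrete Leibniz split of $u_k(x)-u_k(y)$, the $\Delta_2$ estimate, the bound $\eta_k\leq 1$ to recover $\frac{\C}{2}\Phi_{s,\varphi}(u)$, the split of the cut-off term at $\|y^{-1}\cdot x\|_\mathbb{G}=1$, the mean-value bound $|\eta_k(x)-\eta_k(y)|\lesssim\tfrac{2}{k}\|y^{-1}\cdot x\|_\mathbb{G}$ on the near region, the crude bound $|\eta_k(x)-\eta_k(y)|\leq 2$ on the far region, and the radial reductions via Proposition \ref{prop.radialfunct}. The paper's algebraic split is $u_k(x)-u_k(y)=u(x)(\eta_k(x)-\eta_k(y))+\eta_k(y)(u(x)-u(y))$ rather than your $\eta_k(x)(u(x)-u(y))+u(y)(\eta_k(x)-\eta_k(y))$, but by the $(x,y)$-symmetry of the integration domain this makes no difference.

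One conceptual slip in your closing remarks: hypothesis (v) (the exact triangle inequality) is not what underwrites the Lipschitz estimate for $\eta_k$. On any Carnot group all homogeneous distances are bi-Lipschitz equivalent to the Carnot--Carath\'eodory distance regardless of whether they satisfy the exact triangle inequality (a standard fact, cf.\ \cite{BLU}), so the bound obtained by integrating $\nabla_\mathbb{G}\eta_k$ along a horizontal curve transfers to $\|\cdot\|_\mathbb{G}$ up to a uniform constant without invoking (v). In fact this lemma makes no use of (v) at all; that hypothesis is needed elsewhere (e.g.\ in the proof of Theorem \ref{comp} and in the Maz'ya--Shaposhnikova estimates of Section \ref{section4}). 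You are right, however, that the mean-value step is the one place where sub-Riemannian geometry genuinely enters, and that the rest is $\varphi$-bookkeeping plus Proposition \ref{prop.radialfunct}.
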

\begin{proof}
Let us fix $x,y\in\mathbb{G}$. Then, by the $\Delta_2$-condition and the monotonicity of $\varphi$, we have
\begin{equation*}
    \begin{split}
         \varphi& \left( \frac{|u_k(x)-u_k(y)|}{\|y^{-1}\cdot x\|_\mathbb{G}^s} \right) \\
         &=  \varphi\left( \frac{|\eta_k(x)u(x) -\eta_k(y)u(x) + \eta_k(y)u(x) - \eta_k(y)u(y)|}{\|y^{-1}\cdot x\|_\mathbb{G}^s} \right)\\
         &\leq \frac{\C}{2} \varphi\left( \frac{|u(x)||\eta_k(x) - \eta_k(y)|}{\|y^{-1}\cdot x\|_\mathbb{G}^s} \right) + \frac{\C}{2} \varphi\left( \frac{|\eta_k(y)||u(x)-u(y)|}{\|y^{-1}\cdot x\|_\mathbb{G}^s} \right).
    \end{split}
\end{equation*}
Hence, being $\eta_k\leq 1$ for any $k\in\mathbb{N}$, we get
\begin{equation*}
    \begin{split}
        \Phi_{s,\varphi}(u_k) &= \iint_{\mathbb{G}\times\mathbb{G}}\varphi\left( \frac{|u_k(x)-u_k(y)|}{\|y^{-1}\cdot x\|_\mathbb{G}^s}\right)\frac{dx\,dy}{\|y^{-1}\cdot x\|_\mathbb{G}^Q} \\
        & \leq \frac{\C}{2}\Phi_{s,\varphi}(u)+\frac{\C}{2} \iint_{\mathbb{G}\times\mathbb{G}} \varphi\left( \frac{|u(x)||\eta_k(x)-\eta_k(y)|}{\|y^{-1}\cdot x\|_\mathbb{G}^s} \right) \frac{dx\,dy}{\|y^{-1}\cdot x\|_\mathbb{G}^Q}\\
        & = \frac{\C}{2}\Phi_{s,\varphi}(u)+\frac{\C}{2}\left[ \int_\mathbb{G}\int_{\{\|y^{-1}\cdot x\|_\mathbb{G}<1\}}\varphi\left(\frac{|u(x)||\eta_k(x) - \eta_k(y)|}{\|y^{-1}\cdot x\|_\mathbb{G}^s} \right) \frac{dx\,dy}{\|y^{-1}\cdot x\|_\mathbb{G}^Q}\right. \\
        & \quad+\left.\int_\mathbb{G}\int_{\{\|y^{-1}\cdot x\|_\mathbb{G}\geq 1\}}\varphi\left(\frac{|u(x)||\eta_k(x) - \eta_k(y)|}{\|y^{-1}\cdot x\|_\mathbb{G}^s} \right) \frac{dx\,dy}{\|y^{-1}\cdot x\|_\mathbb{G}^Q}\right].
    \end{split}
\end{equation*}
Since $\|\nabla\eta_k\|_{\mathbb{R}^m}\leq\frac{2}{k}$, then, by \eqref{G1}, assuming without loss of generality $k>2$, and by Proposition \ref{prop.radialfunct}, we have
\begin{equation*}
    \begin{split}
         \int_\mathbb{G}\int_{\{\|y^{-1}\cdot x\|_\mathbb{G}<1\}} &\varphi\left( \frac{|u(x)||\eta_k(x) - \eta_k(y)|}{\|y^{-1}\cdot x\|_\mathbb{G}^s} \right) \frac{dx\,dy}{\|y^{-1}\cdot x\|_\mathbb{G}^Q}\\
         &\leq \int_\mathbb{G}\left(\int_{\{\|y^{-1}\cdot x\|_\mathbb{G}\leq1\}} \varphi\left(\frac{2}{k}\frac{|u(x)|}{\|y^{-1}\cdot x\|_\mathbb{G}^{s-1}}\right)\frac{dy}{\|y^{-1}\cdot x\|_\mathbb{G}^Q}\right)\,dx\\
         &\leq\left(\frac{2}{k}\right)^{p^-} \int_\mathbb{G}\left(\int_{\{\|y^{-1}\cdot x\|_\mathbb{G}\leq1\}} \frac{dy}{\|y^{-1}\cdot x\|_\mathbb{G}^{(s-1)p^++Q}}\right)\varphi(|u(x)|)\,dx\\
         &=\Phi_\varphi(u)\left(\frac{2}{k}\right)^{p^-}QC_b\int_0^1r^{(1-s)p^+-1}\,dr=\left(\frac{2}{k}\right)^{p^-}\frac{QC_b}{(1-s)p^+}\Phi_\varphi(u).
         \end{split}
\end{equation*}
Moreover
\begin{equation*}
    \begin{split}
        \int_\mathbb{G}\int_{\{\|y^{-1}\cdot x\|_\mathbb{G}\geq 1\}} &\varphi\left( \frac{|u(x)||\eta_k(x) - \eta_k(y)|}{\|y^{-1}\cdot x\|_\mathbb{G}^s} \right) \frac{dx\,dy}{\|y^{-1}\cdot x\|_\mathbb{G}^Q}\\
        &\leq\int_\mathbb{G}\int_{\{\|y^{-1}\cdot x\|_\mathbb{G}\geq 1\}} \varphi\left( \frac{2|u(x)|}{\|y^{-1}\cdot x\|_\mathbb{G}^s} \right) \frac{dx\,dy}{\|y^{-1}\cdot x\|_\mathbb{G}^Q}\\
        & \leq 2^{p^+} \int_\mathbb{G}\left(\int_{\{\|y^{-1}\cdot x\|_\mathbb{G}\geq 1\}}\frac{dy}{\|y^{-1}\cdot x\|_\mathbb{G}^{sp^-+Q}}\right)\varphi(|u(x)|)\,dx\\
        & = \Phi_\varphi(u)2^{p^+}QC_b\int_1^\infty r^{-sp^--1}\,dr=2^{p^+}\frac{QC_b}{sp^-}\Phi_\varphi(u).
    \end{split}
\end{equation*}
\end{proof}
\section{A Bougain-Brezis-Mironescu-type Theorem}\label{section3}
In order to prove Theorem \ref{BBM}, we need the two following fundamental Lemmas.
\begin{lem}\label{lemma4.2}
Let $u\in W^{1,\varphi}(\B{G})$. Then, for any $0<s<1$, it holds that
\[
\Phi_{s,\varphi}(u)\leq\frac{QC_b}{p^-}\left( \frac{1}{1-s}\Phi_{\varphi}(\|\nabla_\mathbb{G} u\|_{\mathbb{R}^m}) + \frac{\C}{s}\Phi_{\varphi}(u)\right),
\]
where $\C$ is the $\Delta_2$-constant given in \eqref{doubling} and $p^-$ is given in \eqref{L}.
\end{lem}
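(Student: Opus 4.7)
The plan is to split the integral defining $\Phi_{s,\varphi}(u)$ at $\|y^{-1}\cdot x\|_\mathbb{G}=1$, so as to control the near-diagonal part by the horizontal gradient (through a mean-value estimate along horizontal curves) and the far part by $u$ itself (through the $\Delta_2$-condition). The radial integrals in both pieces are then reduced to explicit one-dimensional integrals via Proposition \ref{prop.radialfunct}.

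For the far piece, setting $h:=y^{-1}\cdot x$, I would first use that $\|h\|_\mathbb{G}^{-s}\le 1$ and invoke $(\varphi_1)$ to obtain $\varphi(|u(x)-u(y)|/\|h\|_\mathbb{G}^s)\le \|h\|_\mathbb{G}^{-sp^-}\varphi(|u(x)-u(y)|)$. Monotonicity of $\varphi$ together with the $\Delta_2$-type inequality
\[
\varphi(a+b)=\varphi\!\left(2\cdot\tfrac{a+b}{2}\right)\le \C\,\varphi\!\left(\tfrac{a+b}{2}\right)\le \tfrac{\C}{2}\bigl(\varphi(a)+\varphi(b)\bigr),
\]
where the last step uses convexity, then bounds $\varphi(|u(x)-u(y)|)\le \tfrac{\C}{2}(\varphi(|u(x)|)+\varphi(|u(y)|))$. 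After the change of variable $(x,y)\mapsto(x,h)$ (with Jacobian $1$ by the left-invariance of the Haar measure), a symmetrisation in the two $u$-terms, and Proposition \ref{prop.radialfunct} applied to $\int_{\|h\|_\mathbb{G}\ge 1}\|h\|_\mathbb{G}^{-sp^--Q}\,dh = QC_b/(sp^-)$, the far piece is dominated by $\C\,\Phi_\varphi(u)\cdot QC_b/(sp^-)$, exactly the second term in the statement.

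For the near piece, I would invoke a horizontal mean-value inequality of the form
\[
|u(x)-u(y)|\;\le\;\|h\|_\mathbb{G}\int_0^1\|\nabla_\mathbb{G} u(\gamma_t)\|_{\mathbb{R}^m}\,dt,
\]
valid for smooth $u$ along a suitable horizontal curve $\gamma$ from $y$ to $x$ of length comparable to $\|h\|_\mathbb{G}$ (a consequence of Pansu differentiability together with the compatibility of $\|\cdot\|_\mathbb{G}$ with the sub-Riemannian structure). Jensen's inequality for the convex $\varphi$ then gives
\[
\varphi\!\left(\frac{|u(x)-u(y)|}{\|h\|_\mathbb{G}^s}\right)\le \int_0^1 \varphi\!\bigl(\|h\|_\mathbb{G}^{1-s}\|\nabla_\mathbb{G} u(\gamma_t)\|_{\mathbb{R}^m}\bigr)\,dt,
\]
and since $\|h\|_\mathbb{G}^{1-s}\le 1$, $(\varphi_1)$ extracts the factor $\|h\|_\mathbb{G}^{(1-s)p^-}$. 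Fubini, the (Jacobian-$1$) change of variable $z=\gamma_t$ at fixed $(h,t)$, and Proposition \ref{prop.radialfunct} applied to $\int_{\|h\|_\mathbb{G}<1}\|h\|_\mathbb{G}^{(1-s)p^--Q}\,dh = QC_b/((1-s)p^-)$ then bound the near piece by $\Phi_\varphi(\|\nabla_\mathbb{G} u\|_{\mathbb{R}^m})\cdot QC_b/((1-s)p^-)$, which is the first term.

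Summing the two contributions and factoring out $QC_b/p^-$ gives the lemma; the passage from smooth functions to general $u\in W^{1,\varphi}(\mathbb{G})$ uses the density of $C_c^{\infty}(\mathbb{G})$ in $W^{1,\varphi}(\mathbb{G})$ from Theorem \ref{2.10} together with Fatou applied to $\Phi_{s,\varphi}$. The main obstacle, as is typical of Carnot-group analogues of Euclidean fractional estimates, is the horizontal mean-value step: unlike the straight segment $\{y+t(x-y)\}$ in $\mathbb{R}^n$, no canonical horizontal path from $y$ to $x$ is immediately available that simultaneously has length comparable to $\|h\|_\mathbb{G}$ and behaves Jacobian-$1$ under $x\mapsto \gamma_t(x,y)$ at fixed $h$. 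This is precisely the place where the triangle inequality $(v)$ and the equivalence of $\|\cdot\|_\mathbb{G}$ with the Carnot--Carath\'eodory distance become essential.
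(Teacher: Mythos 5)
Your decomposition of $\Phi_{s,\varphi}(u)$ at $\|y^{-1}\cdot x\|_\mathbb{G}=1$ and the treatment of the far piece are exactly the paper's argument: $(\varphi_1)$ extracts $\|h\|_\mathbb{G}^{-sp^-}$, the $\Delta_2$-plus-convexity inequality $\varphi(a+b)\leq\tfrac{\C}{2}(\varphi(a)+\varphi(b))$ splits the difference, right-invariance of the Haar measure identifies the two resulting $u$-integrals, and Proposition \ref{prop.radialfunct} evaluates $\int_{\{\|h\|_\mathbb{G}\geq 1\}}\|h\|_\mathbb{G}^{-sp^--Q}\,dh=QC_b/(sp^-)$. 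The gap is in the near piece, and you have located it correctly, but the obstacle you describe does not appear in the paper's proof. The paper does not use a horizontal curve of CC-length comparable to $\|h\|_\mathbb{G}$ from $y$ to $x$. Instead, for $u\in C^2_c(\mathbb{G})$ and $h=y^{-1}\cdot x$, it uses the \emph{dilation curve}: setting $\xi(t):=u(x\cdot\delta_t h)$, Pansu differentiability (the computation is in \cite[Section 2]{B}) gives
\[
u(x\cdot h)-u(x)=\xi(1)-\xi(0)=\int_0^1\nabla_\mathbb{G} u(x\cdot\delta_t h)\cdot h'\,dt,
\]
where $h'$ is the horizontal component of $h$, so that $|\nabla_\mathbb{G} u\cdot h'|\leq\|\nabla_\mathbb{G} u\|_{\mathbb{R}^m}\|h'\|_{\mathbb{R}^m}\leq\|\nabla_\mathbb{G} u\|_{\mathbb{R}^m}\|h\|_\mathbb{G}$. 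After Jensen and $(\varphi_1)$ exactly as you planned, the crucial point is that at fixed $(t,h)$ the map $x\mapsto x\cdot\delta_t h$ is a right translation, hence measure-preserving, so $\int_\mathbb{G}\varphi(\|\nabla_\mathbb{G}u(x\cdot\delta_t h)\|_{\mathbb{R}^m})\,dx=\Phi_\varphi(\|\nabla_\mathbb{G}u\|_{\mathbb{R}^m})$ for every $(t,h)$, and the radial integral $\int_{\{\|h\|_\mathbb{G}<1\}}\|h\|_\mathbb{G}^{(1-s)p^--Q}\,dh$ gives $QC_b/((1-s)p^-)$. Thus, contrary to your closing remark, neither the triangle inequality $(v)$ nor any equivalence with the Carnot--Carath\'eodory distance is needed in this lemma: the dilation structure, which every homogeneous norm respects by definition, plays the role of the Euclidean segment. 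Once this substitution is made, your argument coincides with the paper's, including the concluding density-plus-Fatou passage from $C^2_c(\mathbb{G})$ to $W^{1,\varphi}(\mathbb{G})$.
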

\begin{proof}
Let $u\in C_c^2(\B{G})$ and $h=y^{-1}\cdot x$. It follows that
\begin{equation*}
    \begin{split}
        \Phi_{s,\varphi}(u) &= 
        \int_\mathbb{G}\left(\int_{\{\|h\|_\mathbb{G}<1\}}\varphi \left( \frac{|u(x\cdot h)-u(x)|}{\|h\|_\mathbb{G}^s}\right)\frac{dh}{\|h\|_\mathbb{G}^Q}\right)\,dx\\
        &\quad+\int_\mathbb{G}\left(\int_{\{\|h\|_\mathbb{G}\geq 1\}}\varphi \left( \frac{|u(x\cdot h)-u(x)|}{\|h\|_\mathbb{G}^s}\right)\frac{dh}{\|h\|_\mathbb{G}^Q}\right)\,dx= I_1 + I_2.
    \end{split}
\end{equation*}
Let us start from $I_1$. Observe that $u\in C_c^2(\B{G})$ implies that $u$ is Pansu differentiable (see also \cite[Section 2]{B}). If we define the auxiliary function $\xi(t) \coloneqq u(x\cdot\delta_t h)$, then we can write
\begin{equation*}
u(x\cdot h)-u(x) = \xi(1) - \xi(0) = \int_0^1 \frac{d}{dt}\xi(t)\,dt = \int_0^1 \nabla_\mathbb{G} u(x\cdot\delta_t h)\cdot h'\,dt.
\end{equation*}
Therefore, by the monotonicity and the convexity of $\varphi$, we get
\begin{equation}\label{eq.I1lemma4.2}
    \begin{split}
        \varphi\left( \frac{|u(x\cdot h)-u(x)|}{\|h\|_\mathbb{G}^s} \right) &\leq  \varphi\left(\int_0^1 \frac{|\nabla_\mathbb{G} u(x\cdot\delta_t h)\cdot h'|}{\|h\|_\mathbb{G}^s}\,dt\right)\\
        & \leq \int_0^1 \varphi \left( \frac{|\nabla_\mathbb{G} u(x\cdot\delta_t h)\cdot h'|}{\|h\|_\mathbb{G}^s} \right)\,dt\\
        & \leq \int_0^1 \varphi ( \|\nabla_\mathbb{G} u(x\cdot\delta_t h)\|_{\mathbb{R}^m}\|h\|_\mathbb{G}^{1-s})\,dt.
    \end{split}
\end{equation}
Thus, by \eqref{G1} and Proposition \ref{prop.radialfunct}
\begin{align*}
    I_1&\leq\int_\mathbb{G}\left(\int_{\{\|h\|_\mathbb{G}<1\}}\left(\int_0^1 \varphi ( \|\nabla_\mathbb{G} u(x\cdot\delta_t h)\|_{\mathbb{R}^m}\|h\|_\mathbb{G}^{1-s})\,dt\right)\,\frac{dh}{\|h\|_\mathbb{G}^Q}\right)\,dx\\
    &\leq\int_\mathbb{G}\left(\int_{\{\|h\|_\mathbb{G}<1\}}\left(\int_0^1 \varphi ( \|\nabla_\mathbb{G} u(x\cdot\delta_t h)\|_{\mathbb{R}^m})\,dt\right)\,\frac{\|h\|_\mathbb{G}^{(1-s)p^-}}{\|h\|_\mathbb{G}^Q}\,dh\right)\,dx\\
    &=\int_{\{\|h\|_\mathbb{G}<1\}}\|h\|_\mathbb{G}^{(1-s)p^--Q}\,dh\int_\mathbb{G}\varphi( \|\nabla_\mathbb{G} u(x)\|_{\mathbb{R}^m})\,dx\\
    &=QC_b\int_0^1r^{(1-s)p^--1}\,dr\,\Phi_\varphi(\|\nabla u\|_{\mathbb{R}^m})=\frac{QC_b}{(1-s)p^-}\Phi_\varphi(\|\nabla u\|_{\mathbb{R}^m}).
\end{align*}
Moreover, by \eqref{G1}, \eqref{G2}, Proposition \ref{prop.radialfunct}, the monotonicity of $\varphi$ and by a change of variables, we have
\begin{align*}
    I_2&\leq\int_\mathbb{G}\left(\int_{\{\|h\|_\mathbb{G}\geq 1\}}\varphi\left(|u(x\cdot h)|+|u(x)|\right)\frac{dh}{\|h\|_\mathbb{G}^{sp^-+Q}}\right)\,dx\\
    &\leq\frac{\C}{2}\int_\mathbb{G}\left(\int_{\{\|h\|_\mathbb{G}\geq 1\}}\varphi\left(|u(x\cdot h)|\right)\frac{dh}{\|h\|_\mathbb{G}^{sp^-+Q}}\right)\,dx\\
    &\quad+\frac{\C}{2}\int_\mathbb{G}\left(\int_{\{\|h\|_\mathbb{G}\geq 1\}}\varphi\left(|u(x)|\right)\frac{dh}{\|h\|_\mathbb{G}^{sp^-+Q}}\right)\,dx\\
    &=\C\int_\mathbb{G}\left(\int_{\{\|h\|_\mathbb{G}\geq 1\}}\varphi\left(|u(x)|\right)\frac{dh}{\|h\|_\mathbb{G}^{sp^-+Q}}\right)\,dx\\
    &=\C\int_{\{\|h\|_\mathbb{G}\geq 1\}}\frac{dh}{\|h\|_\mathbb{G}^{sp^-+Q}}\int_\mathbb{G}\varphi\left(|u(x)|\right)\,dx\\
    &=\C QC_b\int_1^{+\infty}r^{-sp^--1}\,dr\,\Phi_\varphi(u)=\C\frac{QC_b}{sp^-}\Phi_\varphi(u).
\end{align*}
Finally, for any $u\in W^{1,\varphi}(\B{G})$, let $\{ u_k \}_k \subset  C_c^2(\B{G})$ be convergent to $u$ in $W^{1,\varphi}(\B{G})$. Thus, by the Fatou's Lemma and the continuity of $\varphi$, we get
\begin{equation*}
    \begin{split}
        \Phi_{s,\varphi}(u) &\leq \liminf_{k\to\infty} \Phi_{s,\varphi}(u_k) \leq \lim_{k\to\infty}\left[\frac{QC_b}{p^-}\left( \frac{1}{1-s}\Phi_{\varphi}(\|\nabla_\mathbb{G} u_k\|_{\mathbb{R}^m}) + \frac{\C}{s}\Phi_{\varphi}(u_k)\right) \right]\\
        & =\frac{QC_b}{p^-}\left( \frac{1}{1-s}\Phi_{\varphi}(\|\nabla_\mathbb{G} u\|_{\mathbb{R}^m}) + \frac{\C}{s}\Phi_{\varphi}(u)\right)
    \end{split}
\end{equation*}
as desired.
\end{proof}
\begin{lem}\label{lemma4.3}
Let $\varphi$ be an Orlicz function such that $\tilde{\varphi}$ exists and let $u\in C^2_c(\B{G})$. Then, for every fixed $x\in\B{G}$, we have that
\begin{equation*}
    \lim_{s \uparrow 1}(1-s)\int_{\B{G}} \varphi\left( \frac{|u(x)-u(y)|}{\|y^{-1}\cdot x\|_\mathbb{G}^s} \right) \frac{dy}{\|y^{-1}\cdot x\|_\mathbb{G}^Q} = \tilde{\varphi}(\|\nabla_\mathbb{G} u\|_{\mathbb{R}^m}).
\end{equation*}
\end{lem}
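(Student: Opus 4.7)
The plan is to reduce the integral, via a change of variables and polar coordinates on $\mathbb{G}$, to an expression that matches the definition \eqref{eq.phitilde} of $\tilde{\varphi}$ once the Pansu differential of $u$ is substituted for the increment; the correction terms are controlled by a compactness argument on the unit sphere $S$ and by the refined doubling inequality of Lemma \ref{lem.2.6}.

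First, I would use left-translation invariance of the Haar measure to write $y = x \cdot h$, so that $\|y^{-1}\cdot x\|_\mathbb{G} = \|h^{-1}\|_\mathbb{G} = \|h\|_\mathbb{G}$ (property $(ii)$) and the integral becomes
\begin{equation*}
(1-s)\int_\mathbb{G} \varphi\!\left(\frac{|u(x)-u(x\cdot h)|}{\|h\|_\mathbb{G}^s}\right)\frac{dh}{\|h\|_\mathbb{G}^Q}.
\end{equation*}
Then Proposition \ref{prop.fs} with $h = \delta_r z$, $z\in S$, gives (after collecting the $r^{Q-1}/r^Q$ factors)
\begin{equation*}
(1-s)\int_0^{\infty}\int_S \varphi\!\left(\frac{|u(x)-u(x\cdot \delta_r z)|}{r^s}\right) d\sigma(z)\,\frac{dr}{r}.
\end{equation*}
I would split this integral at $r=1$. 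The tail $r\geq 1$ is easy: since $u\in C_c^2(\mathbb{G})$ is bounded, \eqref{G1} yields $\varphi(2\|u\|_\infty/r^s)\leq \varphi(2\|u\|_\infty)\,r^{-sp^-}$, so the tail is bounded by $\frac{\sigma(S)\varphi(2\|u\|_\infty)}{sp^-}$ and multiplying by $(1-s)$ makes it vanish as $s\uparrow 1$.

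For the main piece $\int_0^1$, I would invoke Pansu differentiability of $u\in C_c^2(\mathbb{G})$, writing
\begin{equation*}
u(x\cdot \delta_r z) - u(x) = r\,\nabla_\mathbb{G} u(x)\cdot z' + \rho(r,z),
\end{equation*}
where $\rho(r,z)/r \to 0$ as $r \to 0$ uniformly in $z\in S$ (by compactness of $S$ and the $C^2$ regularity of $u$, which make the second-order correction coming from the Baker--Campbell--Hausdorff expansion uniform). Now, for an arbitrary $\delta>0$, I would apply Lemma \ref{lem.2.6} in both directions:
\begin{align*}
\varphi\!\left(\tfrac{|u(x\cdot \delta_r z)-u(x)|}{r^s}\right) &\leq (1+\delta)^{p^+}\varphi\!\left(|\nabla_\mathbb{G} u(x)\cdot z'|\,r^{1-s}\right) + C_\delta\,\varphi\!\left(\tfrac{|\rho(r,z)|}{r^s}\right),\\
\varphi\!\left(|\nabla_\mathbb{G} u(x)\cdot z'|\,r^{1-s}\right) &\leq (1+\delta)^{p^+}\varphi\!\left(\tfrac{|u(x\cdot \delta_r z)-u(x)|}{r^s}\right) + C_\delta\,\varphi\!\left(\tfrac{|\rho(r,z)|}{r^s}\right).
\end{align*}

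The heart of the argument is then showing that the remainder
$(1-s)\int_0^1\int_S \varphi(|\rho(r,z)|/r^s)\,d\sigma(z)\,\tfrac{dr}{r}$ vanishes as $s\uparrow 1$. For any $\eta>0$, pick $r_0>0$ with $|\rho(r,z)|\leq \eta r$ on $(0,r_0)\times S$; on $(0,r_0)$ use \eqref{G1} to bound the integrand by $\eta^{p^-}\varphi(r^{1-s})$ (whose integral in $r$ against $dr/r$ is of order $1/((1-s)p^-)$, which is tamed by the prefactor $(1-s)$), while on $(r_0,1)$ the term $|\rho(r,z)|/r^s$ is uniformly bounded and integrates to a finite constant that vanishes after multiplication by $(1-s)$. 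Together with the invariance of $\sigma$ under horizontal rotations (hypothesis $(iv)$ on the homogeneous norm), which allows one to replace $\int_S \varphi(|\nabla_\mathbb{G} u(x)\cdot z'|\,r^{1-s})\,d\sigma(z)$ by the symmetric form $\int_S \varphi(\|\nabla_\mathbb{G} u(x)\|_{\mathbb{R}^m}\,\|z'\|_{\mathbb{R}^m}\,r^{1-s})\,d\sigma(z)$ appearing in \eqref{eq.phitilde}, this yields
\begin{equation*}
(1+\delta)^{-p^+}\tilde{\varphi}(\|\nabla_\mathbb{G} u(x)\|_{\mathbb{R}^m}) \leq \liminf_{s\uparrow 1} \leq \limsup_{s\uparrow 1} \leq (1+\delta)^{p^+}\tilde{\varphi}(\|\nabla_\mathbb{G} u(x)\|_{\mathbb{R}^m}),
\end{equation*}
and letting $\delta\downarrow 0$ closes the proof. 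The main technical obstacle is the uniform-in-$z$ control of the Pansu remainder $\rho(r,z)$ and its integrability against $dr/r$ near $r=0$; the $C^2$-regularity of $u$ and the compactness of $S$ are what make this work.
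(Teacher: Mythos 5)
Your proof is correct but departs from the paper's argument in a meaningful way. The paper handles the main piece $I_1$ (over $\|h\|_\mathbb{G}<1$) by a single pointwise local-Lipschitz estimate: since $\varphi$ is Lipschitz with some constant $L$ on $[0,\|\nabla_\mathbb{G}u\|_\infty]$ and the stratified Taylor theorem gives $|u(x\cdot h)-u(x)-\nabla_\mathbb{G}u(x)\cdot h'| \leq C\|h\|_\mathbb{G}^2$, the difference between $\varphi$ evaluated at the difference quotient and $\varphi$ evaluated at the Pansu linearization is bounded by $C\|h\|_\mathbb{G}^{2-s}$, which integrates against $\|h\|_\mathbb{G}^{-Q}\,dh$ over the unit ball to a quantity bounded uniformly in $s$; multiplying by $(1-s)$ then kills the error in one stroke. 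You instead sandwich the two $\varphi$-values using the near-subadditivity of Lemma~\ref{lem.2.6} in both directions, pick up multiplicative $(1+\delta)^{p^+}$ factors plus a $C_\delta\varphi(|\rho(r,z)|/r^s)$ remainder, show that $(1-s)\int_0^1\int_S\varphi(|\rho(r,z)|/r^s)\,d\sigma\,\tfrac{dr}{r}\to 0$ via an $\eta$--$r_0$ argument, and finally let $\delta\downarrow 0$. Both proofs rest on the same two ingredients: the uniform $O(r^2)$ Taylor remainder on the compact sphere $S$ coming from the $C^2$-regularity of $u$, and the invariance of $\|\cdot\|_\mathbb{G}$ (hence of $\sigma$) under horizontal rotations to pass from $|\nabla_\mathbb{G}u(x)\cdot z'|$ to $\|\nabla_\mathbb{G}u(x)\|_{\mathbb{R}^m}\|z'\|_{\mathbb{R}^m}$. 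The paper's route is shorter and entirely one-sided; yours is more ``Orlicz-native'' in that it does not invoke the pointwise Lipschitz constant of $\varphi$, at the cost of the extra $\delta$-limit and a more involved remainder estimate. Both are valid.
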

\begin{proof}
For each fixed $x\in\B{G}$ we have
\begin{equation*}
    \begin{split}
        \int_{\B{G}}\varphi\left( \frac{|u(x)-u(y)|}{\|y^{-1}\cdot x\|_\mathbb{G}^s} \right)\frac{dy}{\|y^{-1}\cdot x\|_\mathbb{G}^Q} = & \int_{\{\|y^{-1}\cdot x\|_\mathbb{G}<1\}} \varphi\left( \frac{|u(x)-u(y)|}{\|y^{-1}\cdot x\|_\mathbb{G}^s} \right)\frac{dy}{\|y^{-1}\cdot x\|_\mathbb{G}^Q}\\
       & + \int_{\{\|y^{-1}\cdot x\|_\mathbb{G}\geq 1\}} \varphi\left(\frac{|u(x)-u(y)|}{\|y^{-1}\cdot x\|_\mathbb{G}^s}\right)\frac{dy}{\|y^{-1}\cdot x\|_\mathbb{G}^Q}\\
       & = I_1 + I_2.
    \end{split}
\end{equation*}

Let us first notice that
\begin{equation*}
\lim_{s\uparrow 1}(1-s)I_2 = 0.
\end{equation*}
In facty, by \eqref{G1} and Proposition \ref{prop.radialfunct}, we have
\begin{align*} 
    \int_{\{\|y^{-1}\cdot x\|_\mathbb{G}\geq 1\}} \varphi\left(\frac{|u(x)-u(y)|}{\|y^{-1}\cdot x\|_\mathbb{G}^s}\right)\frac{dy}{\|y^{-1}\cdot x\|_\mathbb{G}^Q}&\leq\varphi(2\|u\|_\infty)\int_{\{\|y^{-1}\cdot x\|_\mathbb{G}\geq 1\}}\frac{dy}{\|y^{-1}\cdot x\|_\mathbb{G}^{sp^- +Q}}\\
    &=\varphi(2\|u\|_\infty)QC_b\int_1^{+\infty}r^{-sp^--1}\,dr\\
    &=\frac{QC_b}{sp^-}\varphi(2\|u\|_\infty).
\end{align*}

Now, by the local Lipschitzianity of $\varphi$, for any $x,y\in\B{G}$ such that $x\neq y$, we have
\begin{equation*}
\begin{split}
\left| \varphi\left( \frac{|u(x)-u(y)|}{\|h\|_\mathbb{G}^s} \right)  - \varphi\left( \frac{|\nabla_\mathbb{G} u(x)\cdot h'|}{\|h\|_\mathbb{G}^s} \right) \right| & \leq L \frac{|u(x)-u(y) - \nabla_\mathbb{G} u(x) \cdot h'|}{\|h\|_\mathbb{G}^s}\\
& \leq C\|h\|_\mathbb{G}^{2-s}
\end{split}
\end{equation*}
where $L$ is the Lipschitz constant of $\varphi$ in the interval $[0,\|\nabla_\mathbb{G}u\|_\infty]$, $C$ is a constant depending on the $C^2$-norm of $u$ and $h=y^{-1}\cdot x$. (The last inequality follows from standard results about the Taylor polynomial that can be found, for instance, in \cite[Chapter 20]{BLU}).

Moreover, by Proposition \ref{prop.radialfunct}, it follows that
\[
\int_{\{\|h\|_\mathbb{G}<1\}}\|h\|_\mathbb{G}^{2-s}\frac{dy}{\|h\|_\mathbb{G}^Q} = QC_b\int_0^1 r^{1-s}\,dr = \frac{QC_b}{2-s}.
\]
Thus
\begin{equation*}
    \begin{split}
        &\lim_{s\uparrow 1}(1-s)\int_{\{\|y^{-1}\cdot x\|_\mathbb{G}<1\}}\varphi\left( \frac{|u(x)-u(y)|}{\|y^{-1}\cdot x\|_\mathbb{G}^s} \right)\frac{dy}{\|y^{-1}\cdot x\|_\mathbb{G}^Q}\\
        & = \lim_{s\uparrow 1}(1-s)\int_{\{\|h\|_\mathbb{G}<1\}}\varphi\left( \frac{|\nabla_\mathbb{G} u(x)\cdot h'|}{\|h\|_\mathbb{G}^s} \right)\frac{dy}{\|h\|_\mathbb{G}^Q}.
    \end{split}
\end{equation*}

Finally, by Proposition \ref{prop.fs} and the invariance of $\|\cdot\|_\mathbb{G}$ under horizontal rotations, we have
\begin{equation*}
    \begin{split}
        \int_{\{\|h\|_\mathbb{G}<1\}}\varphi\left( \frac{|\nabla_\mathbb{G} u(x)\cdot h'|}{\|h\|_\mathbb{G}^s} \right)\frac{dy}{\|h\|_\mathbb{G}^Q} &= \int_0^1\left(\int_S \varphi\left(\frac{|\nabla_\mathbb{G} u(x)\cdot \delta_r z'|}{\|\delta_r z\|_\mathbb{G}^s}\right)\frac{d\sigma(z)}{\|\delta_r z\|_\mathbb{G}^Q}\right)r^{Q-1}\,dr\\
        &=\int_0^1\left(\int_S \varphi\left(\frac{|\nabla_\mathbb{G} u(x)\cdot z'|}{r^s\|z\|_\mathbb{G}^s}r\right)\frac{d\sigma(z)}{\|z\|_\mathbb{G}^Q}\right)\frac{r^{Q-1}}{r^Q}\,dr\\
        &=\int_0^1\left(\int_S \varphi(\|\nabla_\mathbb{G} u(x)\|_{\mathbb{R}^m}\|z'\|_{\mathbb{R}^m}r^{1-s}) d\sigma(z)\right)\frac{dr}{r},
    \end{split}
\end{equation*}
i.e.,
\begin{equation*}\label{eq.I1}
\lim_{s\uparrow 1}(1-s)I_1 = \tilde{\varphi}(\|\nabla_\mathbb{G} u(x)\|_{\mathbb{R}^m}).
\end{equation*}
\end{proof}
Finally, we are ready to prove Theorem \ref{BBM}.
\begin{proof}[Proof of Theorem \ref{BBM}]
We divide the proof of the Theorem in three steps. 
\medskip

\underline{\textit{Step 1:}} Let us prove the Theorem for any function $u\in C^2_c({\B{G}})$ whose support is contained in $B(0,R)$. Let
\[
F_s(x)\coloneqq \int_\mathbb{G}\varphi\left( \frac{|u(x)-u(y)|}{\|y^{-1}\cdot x\|_\mathbb{G}^s} \right) \frac{dy}{\|y^{-1}\cdot x\|_\mathbb{G}^Q}.
\]
In virtu of Lemma \ref{lemma4.3}, in order to apply the Dominated Convergence Theorem, it is enough to show the existence of an integrable function in $\mathbb{G}$ that dominates the sequence $\{(1-s)F_s\}_{s\in(0,1)}$.

Let us fix $R>1$. For any $x\in\mathbb{G}$ such that $\|x\|_\mathbb{G}<2R$, we have
\begin{align*}
    F_s(x)&=\int_{\{\|y^{-1}\cdot x\|_\mathbb{G}< 1\}} \varphi\left(\frac{|u(x)-u(y)|}{\|y^{-1}\cdot x\|_\mathbb{G}^s}\right)\frac{dy}{\|y^{-1}\cdot x\|_\mathbb{G}^Q}\\
    &\quad+\int_{\{\|y^{-1}\cdot x\|_\mathbb{G}\geq1\}}\varphi\left(\frac{|u(x)-u(y)|}{\|y^{-1}\cdot x\|_\mathbb{G}^s}\right)\frac{dy}{\|y^{-1}\cdot x\|_\mathbb{G}^Q}= I_1 + I_2.
\end{align*}
By \eqref{eq.I1lemma4.2}, \eqref{G1}, Proposition \ref{prop.radialfunct} and the monotonicity of $\varphi$, called $h=y^{-1}\cdot x$, we have
\begin{equation*}
    \begin{split}
        I_1&\leq\int_{\{\|h\|_\mathbb{G}<1\}}\left(\int_0^1\varphi ( \|\nabla_\mathbb{G} u(x\cdot\delta_t h)\|_{\mathbb{R}^m}\|h\|_\mathbb{G}^{1-s})\,dt\right)\frac{dh}{\|h\|_\mathbb{G}^Q}\\
        &\leq\int_{\{\|h\|_\mathbb{G}<1\}}\left(\int_0^1\varphi(\|\nabla_\mathbb{G} u(x\cdot\delta_t h)\|_{\mathbb{R}^m})\,dt\right)\frac{\|h\|_\mathbb{G}^{(1-s)p^-}}{\|h\|_\mathbb{G}^Q}\,dh\\
        &\leq\varphi(||\nabla_\mathbb{G} u||_{\infty})QC_b\int_0^1 r^{(1-s)p^--1}\,dr=\frac{QC_b}{(1-s)p^-}\varphi(||\nabla_\mathbb{G} u||_{\infty}).
    \end{split}
\end{equation*}
Moreover, by \eqref{G1} and Proposition \ref{prop.radialfunct}
\begin{equation*}
    \begin{split}
        I_2&\leq\int_{\{\|y^{-1}\cdot x\|_\mathbb{G}\geq1\}}\varphi\left(|u(x)|+|u(y)|\right)\frac{dy}{\|y^{-1}\cdot x\|_\mathbb{G}^{sp^-+Q}}\\
        &\leq\varphi(2||u||_{\infty})QC_b\int_1^{\infty}r^{-sp^--1}\,dr=\frac{QC_b}{sp^-}\varphi(2||u||_{\infty}).
    \end{split}
\end{equation*}
Thus
\begin{equation}\label{chi1}
    F_s(x)\leq\frac{QC_b}{(1-s)p^-}\varphi(||\nabla_\mathbb{G} u||_{\infty})+\frac{QC_b}{sp^-}\varphi(2||u||_{\infty})
\end{equation}
for every $\|x\|_\mathbb{G}<2R$.

Let $\|x\|_\mathbb{G}\geq 2R$. Since the support of $u$ is contained in $B(0,R)$, then $u(z)=0$ for any $\|z\|_\mathbb{G}>R$. Thus
\[
F_s(x) = \int_{\{\|y\|_\mathbb{G}\leq R\}}\varphi\left( \frac{|u(y)|}{\|y^{-1}\cdot x\|_\mathbb{G}^s} \right) \frac{dy}{\|y^{-1}\cdot x\|_\mathbb{G}^Q}.
\]
Now, since $\|y^{-1}\cdot x\|_\mathbb{G}\geq\|x\|_\mathbb{G}-\|y\|_\mathbb{G}\geq\|x\|_\mathbb{G}-R\geq\frac{1}{2}\|x\|_\mathbb{G}$, then by the monotonicity of $\varphi$, the $\Delta_2$-condition and \eqref{G1}, we get
\begin{equation*}
    \begin{split}
        F_s(x)&\leq\int_{\{\|y\|_\mathbb{G}\leq R\}}\varphi\left( \frac{|u(y)|}{(\frac{1}{2}\|x\|_\mathbb{G})^s} \right)\frac{dy}{(\frac{1}{2}\|x\|_\mathbb{G})^Q}\\
        &\leq2^Q\int_{\{\|y\|_\mathbb{G}\leq R\}}\varphi\left(2^s|u(y)| \right)\frac{dy}{\|x\|_\mathbb{G}^{sp^-+Q}}\\
        &\leq\C\frac{2^Q}{\|x\|_\mathbb{G}^{sp^-+Q}}\int_{\{\|y\|_\mathbb{G}\leq R\}}\varphi\left(|u(y)| \right)\,dy\\
        &\leq\C\frac{2^Q}{\|x\|_\mathbb{G}^{\frac{1}{2}p^-+Q}}\int_{\{\|y\|_\mathbb{G}\leq R\}}\varphi\left(|u(y)|\right)\,dy,
    \end{split}
\end{equation*}
that is
\begin{equation}\label{chi2}
    F_s(x)\leq\frac{K}{\|x\|_\mathbb{G}^{\frac{1}{2}p^-+Q}}
\end{equation}
for every $\|x\|_\mathbb{G}\geq 2R$, where we assumed $s\geq 1/2$. Here $K$ is a constant independent of $s$.

Therefore, by \eqref{chi1} and \eqref{chi2}, we have
\begin{align*}
    F_s(x)&\leq\left(\frac{QC_b}{(1-s)p^-}\varphi(||\nabla_\mathbb{G} u||_{\infty})+\frac{QC_b}{sp^-}\varphi(2||u||_{\infty})\right)\chi_{B(0,2R)}(x)\\
    &\quad+\frac{K}{\|x\|_\mathbb{G}^{\frac{1}{2}p^-+Q}}\chi_{\mathbb{G}\setminus B(0,2R)}(x)=:H(x),
\end{align*}
i.e.,
\begin{align*}
    (1-s)F_s(x)\leq(1-s)H(x)\in L^1(\B{G}),
\end{align*}
as desired.

\underline{\textit{Step 2:}} Let $u\in W^{1,\varphi}(\B{G})$. Then, by Theorem \ref{2.10}, there exists $\{u_{k}\}_{k\in\mathbb{N}}\subset C^2_c({\B{G}})$ such that $u_k\to u$ in $W^{1,\varphi}(\B{G})$. Let us show that
\[
\lim_{s\uparrow 1}(1-s)\Phi_{s,\varphi}(u) = \Phi_{\tilde{\varphi}}(\|\nabla_\mathbb{G} u\|_{\mathbb{R}^m}).
\]
Being
\begin{align*}
    |(1-s)\Phi_{s,\varphi}(u)-\Phi_{\tilde{\varphi}}(\|\nabla_\mathbb{G} u\|_{\mathbb{R}^m})|&\leq |(1-s)\Phi_{s,\varphi}(u)-(1-s)\Phi_{s,\varphi}(u_k)|\\&
    +|(1-s)\Phi_{s,\varphi}(u_k)-\Phi_{\tilde{\varphi}}(\|\nabla_\mathbb{G} u_k\|_{\mathbb{R}^m})|\\&
    +|\Phi_{\tilde{\varphi}}(\|\nabla_\mathbb{G} u_k \|_{\mathbb{R}^m})-\Phi_{\tilde{\varphi}}(\|\nabla_\mathbb{G} u\|_{\mathbb{R}^m})|
\end{align*}
then, by Lemma \ref{lemma4.3}, it only remains to show that for any $s\in(0,1)$ and for any $\varepsilon>0$ there exists $\overline{k}\in\mathbb{N}$ such that
\begin{align*}
    (1-s)|\Phi_{s,\varphi}(u)-\Phi_{s,\varphi}(u_k)|+|\Phi_{\tilde{\varphi}}(\|\nabla_\mathbb{G} u_k\|_{\mathbb{R}^m})-\Phi_{\tilde{\varphi}}(\|\nabla_\mathbb{G} u\|_{\mathbb{R}^m})|<\varepsilon
\end{align*}
$\forall k\geq\overline{k}$.

Fixed $\varepsilon>0$, by Theorem \ref{2.10}, there exists $k_0\in\mathbb{N}$ such that
\begin{align*}\label{grad}
    |\Phi_{\tilde{\varphi}}(\|\nabla_\mathbb{G} u_k\|_{\mathbb{R}^m})-\Phi_{\tilde{\varphi}}(\|\nabla_\mathbb{G} u\|_{\mathbb{R}^m})|<\dfrac{\varepsilon}{2}
\end{align*}
for any $k\geq k_0$. Moreover, by Lemma \ref{lem.2.6}, for any $\delta>0$ there exists $C_\delta>0$ such that
\[
\varphi(s+t) \leq C_{\delta}\varphi(s) + (1+\delta)^{p^+}\varphi(t)\ \text{for any }s,t\geq 0.
\]

Moreover, there exists $\overline{\delta}>0$ such that $(1+\delta)^{p^+}\leq 1+\overline{\delta}$. Therefore
\begin{align*}
    |\Phi_{s,\varphi}(u)-\Phi_{s,\varphi}(u_k)|&\leq\iint_{\mathbb{G}\times\mathbb{G}}\bigg|\varphi\bigg(\frac{|(u-u_k)(x)-(u-u_k)(y)|}{\|y^{-1}\cdot x\|_\mathbb{G}^s}\\
    &\quad+\frac{|u_k(x)-u_k(y)|}{\|y^{-1}\cdot x\|_\mathbb{G}^s}\bigg)-\varphi\left(\frac{|u_k(x)-u_k(y)|}{\|y^{-1}\cdot x\|_\mathbb{G}^s}\right)\bigg|\frac{dx\,dy}{\|y^{-1}\cdot x\|_\mathbb{G}^Q}\\
    &\leq C_\delta\Phi_{s,\varphi}(u-u_k)+\overline{\delta}\Phi_{s,\varphi}(u_k).
\end{align*}
Taking into account Lemma \ref{lemma4.2}, and being $u_k\to u$ in $W^{1,\varphi}(\B{G})$, then there exists $k_1\in\mathbb{N}$ such that
\begin{align*}
    \Phi_{s,\varphi}(u-u_k)\leq\frac{QC_b}{p^-}\left( \frac{1}{1-s}\Phi_{\varphi}(\|\nabla_\mathbb{G} (u-u_k)\|_{\mathbb{R}^m}) + \frac{\C}{s}\Phi_{\varphi}(u-u_k)\right)
\end{align*}
for any $k\geq k_1$, that is, 
\begin{align*}
    (1-s)\Phi_{s,\varphi}(u-u_k)\leq\dfrac{\varepsilon}{4C_\delta}.
\end{align*}
Moreover, still by Lemma \ref{lemma4.2}, there exists a positive constant $M$ such that
\begin{align*}
    \Phi_{s,\varphi}(u_k)\leq\frac{QC_b}{p^-}\left( \frac{1}{1-s}\Phi_{\varphi}(\|\nabla_\mathbb{G} (u_k)\|_{\mathbb{R}^m}) + \frac{\C}{s}\Phi_{\varphi}(u_k)\right)\leq M.
\end{align*}

Then, taking $\overline{\delta}\leq\dfrac{\varepsilon}{4M(1-s)}$, we get the thesis, as $s\to 1$ for any $k\geq\max\{k_0,k_1\}$.

\underline{\textit{Step 3:}}
In order to conclude the proof of the Theorem, let us prove the result for any $u\in L^\varphi(\B{G})$.

Let us fix $u\in L^\varphi(\B{G})$, $k\in\mathbb{N}$ and $\varepsilon>0$ and let us define
\begin{align*}
    u_{k,\varepsilon}\coloneqq(\eta_k u)*\rho_\epsilon\in C^\infty_c(\B{G})
\end{align*}
where $\{\rho_\varepsilon\}_\varepsilon$ is a sequence of mollifiers and $\{\eta_k\}_k$ is a truncated sequence, in the sense of Definition \ref{def_moll} and Definition \ref{def_trun}.

Then, by Lemma \ref{2.13} and Lemma \ref{2.14}, there exists a positive constant $N$, independent of $k$ and $\varepsilon$, such that
\begin{align*}
    \liminf_{s\uparrow 1}(1-s)\Phi_{s,\varphi}(u_{k,\varepsilon})<N.
\end{align*}
Therefore, by \textit{Step 1}
\begin{align*}
    \Phi_{\tilde{\varphi}}(\|\nabla_\mathbb{G} u_{k,\varepsilon}\|_{\mathbb{R}^m})|<\infty,
\end{align*}
i.e., the sequence $\{u_{k,\varepsilon}\}_{k,\varepsilon}$ is bounded in $W^{1.\tilde{\varphi}}(\B{G})$ and then, in virtue of Proposition \ref{2.16}, $\{u_{k,\varepsilon}\}_{k,\varepsilon}$ is bounded in $W^{1.\varphi}(\B{G})$.

Thus, by the reflexivity of the space $W^{1,\varphi}(\B{G})$, there exists $\tilde{u}\in W^{1,\varphi}(\B{G})$ such that, up to subsequence,
\begin{align*}
    u_{k,\varepsilon}\rightharpoonup\tilde{u}\ \text{weakly in }W^{1,\varphi}(\B{G})
\end{align*}
as $k\uparrow\infty$ and $\varepsilon\downarrow 0$. Thus, being $u_{k,\varepsilon}\to u$ in $L^\varphi(\B{G})$, it follows that $\tilde{u}=u$ in $W^{1,\varphi}(\B{G})$. Finally, the thesis holds by \textit{Step 2}.
\end{proof}
\section{Maz'ya-Shaposhnikova-type Theorems}\label{section4}
The last part of the work consists on some generalizations of the well-known paper of Maz'ya-Shaposhnikova \cite{MS}. 
\begin{theorem}[Liminf estimate]\label{thliminf}
For any $u\in W^{s,\varphi}(\mathbb{G})$ it holds that
\begin{equation}\label{liminf}
    \liminf_{s\downarrow 0}s\Phi_{s,\varphi}(u)\geq \dfrac{4}{\C}\dfrac{QC_b}{p^+}\Phi_\varphi(u),
\end{equation}
where $2<\C\leq 2^{p^+}$ is the $\Delta_2$-constant of Definition \ref{delta2}.
\end{theorem}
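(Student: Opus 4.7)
My plan is to exploit an asymmetric splitting of $\B{G}\times\B{G}$ combined with the \eqref{G2}-type subadditivity applied to the identity $u(x)=(u(x)-u(y))+u(y)$. A symmetric splitting over $\{\|y^{-1}\cdot x\|_\B{G}\geq R\}$ is doomed to fail: the resulting pointwise bound, once integrated over a symmetric region, would produce the negative factor $\frac{2}{\C}-1$, since $\C>2$. The missing multiplicative factor $2$ (together with the $\frac{2}{\C}$ from the subadditive bound) is precisely what an asymmetric decomposition supplies, yielding the sought $\frac{4}{\C}$.

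Fix $R>1$ and introduce the disjoint pair
\begin{equation*}
E_R=\{(x,y)\in\B{G}\times\B{G}: \|x\|_\B{G}<R,\ \|y\|_\B{G}>2R\}, \qquad E_R''=\{(x,y):(y,x)\in E_R\}.
\end{equation*}
Since $(x,y)\mapsto(y,x)$ preserves the product Haar measure, $\|x^{-1}\cdot y\|_\B{G}=\|y^{-1}\cdot x\|_\B{G}$ by $(ii)$, and the integrand in $\Phi_{s,\varphi}$ is symmetric in $(x,y)$, the integrals over $E_R$ and $E_R''$ coincide, so that
\begin{equation*}
\Phi_{s,\varphi}(u)\geq 2\iint_{E_R}\varphi\left(\frac{|u(x)-u(y)|}{\|y^{-1}\cdot x\|_\B{G}^s}\right)\frac{dx\,dy}{\|y^{-1}\cdot x\|_\B{G}^Q}.
\end{equation*}
Dividing $|u(x)|\leq|u(x)-u(y)|+|u(y)|$ by $\|y^{-1}\cdot x\|_\B{G}^s$, applying the monotonicity of $\varphi$ and \eqref{G2}, and rearranging gives the pointwise bound
\begin{equation*}
\varphi\left(\frac{|u(x)-u(y)|}{\|y^{-1}\cdot x\|_\B{G}^s}\right)\geq \frac{2}{\C}\,\varphi\left(\frac{|u(x)|}{\|y^{-1}\cdot x\|_\B{G}^s}\right)-\varphi\left(\frac{|u(y)|}{\|y^{-1}\cdot x\|_\B{G}^s}\right).
\end{equation*}
Calling $T_1(R,s)$ and $T_2(R,s)$ the two resulting integrals over $E_R$, the theorem reduces to
\begin{equation*}
\liminf_{s\downarrow 0}s\,T_1(R,s)\geq \frac{QC_b}{p^+}\int_{B(0,R)}\varphi(|u(x)|)\,dx, \qquad \limsup_{s\downarrow 0}s\,T_2(R,s)=0,
\end{equation*}
after which I let $R\to\infty$ via monotone convergence on $\int_{B(0,R)}\varphi(|u|)\to\Phi_\varphi(u)$.

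The $T_1$ estimate is the heart of the matter. By property $(v)$, the set $\{(x,y):\|x\|_\B{G}<R,\ \|y^{-1}\cdot x\|_\B{G}\geq 3R\}$ is contained in $E_R$, so for fixed $x\in B(0,R)$ Proposition \ref{prop.radialfunct} reduces the inner integral to $QC_b\int_{3R}^{+\infty}\varphi(|u(x)|/r^s)\,dr/r$; the change of variable $t=|u(x)|/r^s$ turns $s$ times this quantity into $QC_b\int_0^{|u(x)|/(3R)^s}\varphi(t)\,dt/t$. Monotone convergence as $s\downarrow 0$ sends this to $QC_b\int_0^{|u(x)|}\varphi(t)\,dt/t$, and the Orlicz inequality $\int_0^a\varphi(t)\,dt/t\geq\varphi(a)/p^+$, which is an immediate consequence of the upper bound $t\phi(t)\leq p^+\varphi(t)$ in \eqref{L}, combined with Fatou's lemma in $x$, delivers the required bound. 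For $T_2$, property $(v)$ yields $\|y^{-1}\cdot x\|_\B{G}\geq\|y\|_\B{G}/2$ on $E_R$, so \eqref{G1} gives $\varphi(|u(y)|/\|y^{-1}\cdot x\|_\B{G}^s)\leq 2^{sp^+}\varphi(|u(y)|)$; together with $\|y^{-1}\cdot x\|_\B{G}^{-Q}\leq 2^Q\|y\|_\B{G}^{-Q}$ and $\|y\|_\B{G}^{-Q}\leq(2R)^{-Q}$ on the domain, I obtain $T_2(R,s)\leq 2^{sp^+}C_b\int_{\{\|y\|_\B{G}>2R\}}\varphi(|u(y)|)\,dy$, which is finite since $u\in L^\varphi(\B{G})$, and multiplying by $s\downarrow 0$ sends it to $0$ for each fixed $R$. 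The only genuine conceptual obstacle is identifying the asymmetric decomposition that doubles the $T_1$ contribution while leaving the $T_2$ error term controllable; once this is in place, the remainder is bookkeeping with \eqref{G1}--\eqref{G2}, \eqref{L}, and the radial integration formula.
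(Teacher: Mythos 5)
Your proof is correct, and it takes a genuinely different route from the paper's. The paper splits $\Phi_{s,\varphi}(u)$ over the $x$-dependent region $\{\|y^{-1}\cdot x\|_\mathbb{G}>2\|x\|_\mathbb{G}\}$ and its complement, uses a Fubini/change-of-variables argument to establish $\Phi_{s,\varphi}(u)\geq 2(i)$, then applies the $\Delta_2$-bound and reduces everything to weighted integrals of the form $\int_\mathbb{G}\varphi(|u(x)|)\|x\|_\mathbb{G}^{-s\overline{p}}\,dx$ and $\int_\mathbb{G}\varphi(|u(x)|)\|x\|_\mathbb{G}^{-s\tilde{p}}\,dx$, concluding with a Fatou argument on the first and an (implicit) claim that the second, multiplied by $3^{s\tilde{p}}-1$, vanishes as $s\downarrow 0$. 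You instead split over the fixed doubly-truncated region $E_R=\{\|x\|_\mathbb{G}<R,\,\|y\|_\mathbb{G}>2R\}$ and its swap, obtaining $\Phi_{s,\varphi}(u)\geq 2\iint_{E_R}$ by the $(x,y)\mapsto(y,x)$ symmetry of the integrand; you then apply the same pointwise $\frac{2}{\mathbf{C}}$-rearrangement of \eqref{G2}, but estimate the main term $T_1$ by the exact change of variable $t=|u(x)|/r^s$ combined with the sharp Orlicz inequality $\int_0^a\varphi(t)\,dt/t\geq\varphi(a)/p^+$ (a direct consequence of the upper half of \eqref{L}) and Fatou in $x$, and control the error $T_2$ purely by the finiteness of $\Phi_\varphi(u)$ restricted to $\{\|y\|_\mathbb{G}>2R\}$, multiplied by $s\to 0$. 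Your route is closer in spirit to the original Maz'ya--Shaposhnikova argument, and it has a tangible technical advantage: it never needs the intermediate quantities $\int_\mathbb{G}\varphi(|u|)\|x\|_\mathbb{G}^{-s\tilde{p}}\,dx$ to be finite or uniformly bounded as $s\downarrow 0$ --- a point the paper's proof uses in the final $\limsup$ step but does not explicitly justify. The price you pay is the extra limit $R\to\infty$ at the end, which is harmless by monotone convergence, and the constant obtained, $\frac{4}{\mathbf{C}}\frac{QC_b}{p^+}$, matches the paper's.
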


\begin{proof}

Observe that we can express $\Phi_{s,\varphi}(u)$ as the sum of the terms
\begin{align*}
(i)&:= \int_{\mathbb{G}} \left(\int_{\{\|y^{-1}\cdot x\|_\mathbb{G}>2\|x\|_\mathbb{G}\}}  \varphi\left(\frac{|u(x)-u(y)|}{\|y^{-1}\cdot x\|_\mathbb{G}^s}\right) \,\frac{dy}{\|y^{-1}\cdot x\|_\mathbb{G}^Q}\right)\,dx\\
(ii)&:=  \int_{\mathbb{G}} \left(\int_{\{\|y^{-1}\cdot x\|_\mathbb{G}\leq 2\|x\|_\mathbb{G}\}}  \varphi\left(\frac{|u(x)-u(y)|}{\|y^{-1}\cdot x\|_\mathbb{G}^s}\right) \,\frac{dy}{\|y^{-1}\cdot x\|_\mathbb{G}^Q}\right)\,dx.
\end{align*}
We claim that
\begin{equation}\label{0}
 \Phi_{s,\varphi}(u) \geq 2\,(i).
\end{equation}
To prove \eqref{0}, first notice that, by changes of variables and the Fubini's Theorem
\begin{align}\label{sd}
\begin{split}
     (i)
     &=\int_\mathbb{G}\left(\int_{\{\|z\|_\mathbb{G}>2\|x\|_\mathbb{G}\}}\varphi\left(\frac{|u(x)-u(x\cdot z^{-1})|}{\|z\|_\mathbb{G}^s}\right)\,\frac{dz}{\|z\|_\mathbb{G}^Q}\right)\,dx\\
     &=\int_\mathbb{G}\left(\int_{\{\|x\|_\mathbb{G}<\frac{1}{2}\|z\|_\mathbb{G}\}}\varphi\left(\frac{|u(x)-u(x\cdot z^{-1})|}{\|z\|_\mathbb{G}^s}\right)\,\frac{dx}{\|z\|_\mathbb{G}^Q}\right)\,dz\\
     &=\int_\mathbb{G}\left(\int_{\{\|y^{-1}\cdot x\|_\mathbb{G}>2\|x\|_\mathbb{G}\}}\varphi\left(\frac{|u(x)-u(y)|}{\|y^{-1}\cdot x\|_\mathbb{G}^s}\right)\,\frac{dx}{\|y^{-1}\cdot x\|_\mathbb{G}^Q}\right)\,dy.
     \end{split}
\end{align}
Moreover, since the triangular inequality gives the inclusion
\begin{align*}
    \{(x,y)\in\mathbb{G}\times\mathbb{G}\colon \|y^{-1}\cdot x\|_\mathbb{G}>2\|x\|_\mathbb{G}\}\subset\{(x,y)\in\mathbb{G}\times\mathbb{G}\colon\|y^{-1}\cdot x\|_\mathbb{G}\leq 2\|y\|_\mathbb{G}\},
\end{align*}
then, by \eqref{sd}, we get
\begin{align*} 
       (ii)
        &=\int_{\mathbb{G}}\left(\int_{\{\|y^{-1}\cdot x\|_\mathbb{G}\leq2\|y\|_\mathbb{G}\}}\varphi\left(\frac{|u(x)-u(y)|}{\|y^{-1}\cdot x\|_\mathbb{G}^s}\right)\,\frac{dx}{\|y^{-1}\cdot x\|_\mathbb{G}^Q}\right)\,dy\\
        &\geq \int_{\mathbb{G}}\left(\int_{\{\|y^{-1}\cdot x\|_\mathbb{G}>2\|x\|_\mathbb{G}\}}\varphi\left(\frac{|u(x)-u(y)|}{\|y^{-1}\cdot x\|_\mathbb{G}^s}\right)\,\frac{dx}{\|y^{-1}\cdot x\|_\mathbb{G}^Q}\right)\,dy\\
        &=\int_{\mathbb{G}}\left(\int_{\{\|y^{-1}\cdot x\|_\mathbb{G}>2\|x\|_\mathbb{G}\}}\varphi\left(\frac{|u(x)-u(y)|}{\|y^{-1}\cdot x\|_\mathbb{G}^s}\right)\,\frac{dy}{\|y^{-1}\cdot x\|_\mathbb{G}^Q}\right)\,dx
\end{align*}
and hence \eqref{0} follows.

\medskip
Now, observe that, by the $\Delta_2$-condition and \eqref{0}, we get
\begin{equation} \label{00}
    \int_{\mathbb{G}} \left(\int_{\{\|y^{-1}\cdot x\|_\mathbb{G}>2\|x\|_\mathbb{G}\}}\varphi\left(\frac{|u(x)|}{\|y^{-1}\cdot x\|_\mathbb{G}^s}\right)\,\frac{dy}{\|y^{-1}\cdot x\|_\mathbb{G}^Q}\right)\,dx \leq \frac{\C}{2} (i)+
    \frac{\C}{2} (iii)
\end{equation}    
where
$$
(iii):=\int_{\mathbb{G}}\left(\int_{\{\|y^{-1}\cdot x\|_\mathbb{G}>2\|x\|_\mathbb{G}\}}\varphi\left(\frac{|u(y)|}{\|y^{-1}\cdot x\|_\mathbb{G}^s}\right)\,\frac{dy}{\|y^{-1}\cdot x\|_\mathbb{G}^Q}\right)\,dx.
$$
Moreover,  by the triangular inequality, $\|y^{-1}\cdot x\|_\mathbb{G}>2\|x\|_\mathbb{G}$ implies that
\begin{itemize}
    \item $\|y^{-1}\cdot x\|_\mathbb{G}\leq\|x\|_\mathbb{G}+\|y\|_\mathbb{G}<\frac{1}{2}\|y^{-1}\cdot x\|_\mathbb{G}+\|y\|_\mathbb{G}$,
    \item $\|y^{-1}\cdot x\|_\mathbb{G}\geq\|y\|_\mathbb{G}-\|x\|_\mathbb{G}>\|y\|_\mathbb{G}-\frac{1}{2}\|y^{-1}\cdot x\|_\mathbb{G}$,
\end{itemize}
that is
\begin{equation*}
    \{\|y^{-1}\cdot x\|_\mathbb{G}>2\|x\|_\mathbb{G}\}\subset\left\{\frac{2}{3}\|y\|_\mathbb{G}<\|y^{-1}\cdot x\|_\mathbb{G}<2\|y\|_\mathbb{G}\right\}.
\end{equation*}
Therefore, from \eqref{G1} and  Proposition \ref{prop.radialfunct}, we find that  
\begin{align} \label{000}
\begin{split}
(iii) &\leq
    \int_{\mathbb{G}}\left(\int_{\{\frac{2}{3}\|y\|_\mathbb{G}<\|y^{-1}\cdot x\|_\mathbb{G}<2\|y\|_\mathbb{G}\}}\frac{dx}{\|y^{-1}\cdot x\|_\mathbb{G}^{s\tilde{p}+Q}}\right)\varphi(|u(y)|)\,dy\\
    &=QC_b\int_{\mathbb{G}}\left(\int_{\frac{2}{3}\|y\|_\mathbb{G}}^{2\|y\|_\mathbb{G}}r^{-s\tilde{p}-1}\,dr\right)\varphi(|u(y)|)\,dy\\
    &=\frac{1}{s} \frac{QC_b}{2^{s\tilde{p}}\tilde{p}}(3^{s\tilde{p}}-1)\int_{\mathbb{G}}\frac{\varphi(|u(y)|)}{\|y\|_\mathbb{G}^{s\tilde{p}}}\,dy.
\end{split}
\end{align}
Furthermore, by \eqref{G1} and Proposition \ref{prop.radialfunct}, it follows that
\begin{align} \label{1}
\begin{split}
    \int_{\mathbb{G}}&\left(\int_{\{\|y^{-1}\cdot x\|_\mathbb{G}>2\|x\|_\mathbb{G}\}}\varphi\left(\frac{|u(x)|}{\|y^{-1}\cdot x\|_\mathbb{G}^s}\right)\,\frac{dy}{\|y^{-1}\cdot x\|_\mathbb{G}^Q}\right)\,dx\\
    &\geq\int_{\mathbb{G}}\left(\int_{\{\|y^{-1}\cdot x\|_\mathbb{G}>2\|x\|_\mathbb{G}\}}\frac{dy}{\|y^{-1}\cdot x\|_\mathbb{G}^{s\overline{p}+Q}}\right)\,\varphi(|u(x)|)\,dx\\
    &=\dfrac{QC_b}{2^{s\overline{p}}s\overline{p}}\int_\mathbb{G}\frac{\varphi(|u(x)|)}{\|x\|_\mathbb{G}^{s\overline{p}}}\,dx\geq\dfrac{QC_b}{2^{s\overline{p}}sp^+}\int_\mathbb{G}\frac{\varphi(|u(x)|)}{\|x\|_\mathbb{G}^{s\overline{p}}}\,dx.
\end{split}
\end{align}

Hence, gathering  \eqref{0}, \eqref{00}, \eqref{000} and \eqref{1} we obtain that
$$
    s\Phi_{s,\varphi}(u)\geq\dfrac{4}{\C}\dfrac{QC_b}{2^{s\overline{p}}p^+}\int_\mathbb{G}\frac{\varphi(|u(x)|)}{\|x\|_\mathbb{G}^{s\overline{p}}}\,dx-\frac{2QC_b}{2^{s\tilde{p}}\tilde{p}}(3^{s\tilde{p}}-1)\int_{\mathbb{G}}\frac{\varphi(|u(x)|)}{\|x\|_\mathbb{G}^{s\tilde{p}}}\,dx.
$$
Finally, by the Fatou's Lemma, we  get
\begin{align*}
    \liminf_{s\downarrow 0}s\Phi_{s,\varphi}(u)&\geq\liminf_{s\downarrow 0}\left[\dfrac{4}{\C}\dfrac{QC_b}{2^{s\overline{p}}p^+}\int_\mathbb{G}\frac{\varphi(|u(x)|)}{\|x\|_\mathbb{G}^{s\overline{p}}}\,dx-\frac{2QC_b}{2^{s\tilde{p}}\tilde{p}}(3^{s\tilde{p}}-1)\int_{\mathbb{G}}\frac{\varphi(|u(x)|)}{\|x\|_\mathbb{G}^{s\tilde{p}}}\,dx\right]\\
    &\geq\liminf_{s\downarrow 0}\dfrac{4}{\C}\dfrac{QC_b}{2^{s\overline{p}}p^+}\int_\mathbb{G}\frac{\varphi(|u(x)|)}{\|x\|_\mathbb{G}^{s\overline{p}}}\,dx-\limsup_{s\downarrow 0}\frac{2QC_b}{2^{s\tilde{p}}\tilde{p}}(3^{s\tilde{p}}-1)\int_{\mathbb{G}}\frac{\varphi(|u(x)|)}{\|x\|_\mathbb{G}^{s\tilde{p}}}\,dx\\
    &\geq \dfrac{4}{\C}\dfrac{QC_b}{p^+}\Phi_\varphi(u).
\end{align*}
\end{proof}
\begin{cor}
If $\liminf_{s\downarrow 0}s\Phi_{s,\varphi}(u)<\infty$, then $u\in L^\varphi(\mathbb{G})$.
\end{cor}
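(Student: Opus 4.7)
The plan is to observe that the corollary is an immediate consequence of the liminf estimate proved in Theorem \ref{thliminf}, since the constant multiplying $\Phi_\varphi(u)$ on the right-hand side of \eqref{liminf} is strictly positive. More precisely, I would first note that the proof of Theorem \ref{thliminf} does not actually require $u\in W^{s,\varphi}(\mathbb{G})$ for any specific $s$: the estimate
\[
\liminf_{s\downarrow 0}s\Phi_{s,\varphi}(u)\geq \dfrac{4}{\C}\dfrac{QC_b}{p^+}\Phi_\varphi(u)
\]
holds for any measurable $u\colon\mathbb{G}\to\mathbb{R}$, with both sides allowed a priori to be infinite.

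Assuming now that $\liminf_{s\downarrow 0}s\Phi_{s,\varphi}(u)<\infty$, I would invoke \eqref{liminf} to deduce
\[
\Phi_\varphi(u)\leq \dfrac{\C\, p^+}{4\,QC_b}\,\liminf_{s\downarrow 0}s\Phi_{s,\varphi}(u)<\infty,
\]
where positivity of $\C$, $Q$, $C_b$ and $p^+$ is ensured by \eqref{doubling} and \eqref{L}. By the very definition of $L^\varphi(\mathbb{G})$ as the set of measurable functions with $\Phi_\varphi(u)<\infty$, this yields $u\in L^\varphi(\mathbb{G})$, which is the claim.

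There is essentially no obstacle: the entire content sits inside Theorem \ref{thliminf}. The only small point to be careful about is the implicit hypothesis of that theorem statement; if one prefers to stick to its literal formulation, one should remark that the argument given in its proof only uses measurability of $u$ and the structural properties \eqref{G1}, \eqref{G2} of $\varphi$, so it applies verbatim here.
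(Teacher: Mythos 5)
Your proposal is correct and coincides with what the paper intends: the corollary is stated without proof precisely because it is a direct rearrangement of the liminf estimate \eqref{liminf}, yielding $\Phi_\varphi(u)\leq \frac{\C p^+}{4QC_b}\liminf_{s\downarrow 0}s\Phi_{s,\varphi}(u)<\infty$. You also correctly flag the one point that needs a remark, namely that Theorem \ref{thliminf} is literally stated for $u\in W^{s,\varphi}(\mathbb{G})$ (which already presupposes $u\in L^\varphi(\mathbb{G})$, making a literal application circular), while the proof's manipulations (Tonelli, changes of variables, Proposition \ref{prop.radialfunct}, Fatou) go through for any measurable $u$ with all quantities valued in $[0,+\infty]$.
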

\begin{theorem}[Limsup estimate]\label{thlimsup}
For any $u\in\bigcup_{s\in (0,1)}W^{s,\varphi}(\mathbb{G})$ it holds that
\begin{equation}\label{limsup}
    \limsup_{s\downarrow 0}s\Phi_{s,\varphi}(u)\leq\C\dfrac{QC_b}{p^-}\Phi_\varphi(u).
\end{equation}
\end{theorem}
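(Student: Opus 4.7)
My plan is to mimic the classical Maz'ya--Shaposhnikova estimate by splitting $\Phi_{s,\varphi}(u)$ according to the size of $\|y^{-1}\cdot x\|_{\mathbb{G}}$. More precisely, I would write
\[
\Phi_{s,\varphi}(u) = \iint_{\{\|y^{-1}\cdot x\|_{\mathbb{G}}<1\}} \varphi\!\left(\tfrac{|u(x)-u(y)|}{\|y^{-1}\cdot x\|_{\mathbb{G}}^s}\right)\tfrac{dx\,dy}{\|y^{-1}\cdot x\|_{\mathbb{G}}^Q} + \iint_{\{\|y^{-1}\cdot x\|_{\mathbb{G}}\geq 1\}} \varphi\!\left(\tfrac{|u(x)-u(y)|}{\|y^{-1}\cdot x\|_{\mathbb{G}}^s}\right)\tfrac{dx\,dy}{\|y^{-1}\cdot x\|_{\mathbb{G}}^Q} =: J_{\rm near}(s) + J_{\rm far}(s),
\]
and show that $s J_{\rm near}(s)\to 0$ while $\limsup_{s\downarrow 0} s J_{\rm far}(s)\leq \mathbf{C}\,\tfrac{QC_b}{p^-}\Phi_{\varphi}(u)$.

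The core of the argument is the estimate of $J_{\rm far}$. Monotonicity of $\varphi$ together with $\Delta_2$ (Definition \ref{delta2}) gives
\[
\varphi\!\left(\tfrac{|u(x)-u(y)|}{\|y^{-1}\cdot x\|_{\mathbb{G}}^s}\right) \leq \tfrac{\mathbf{C}}{2}\Bigl[\varphi\!\left(\tfrac{|u(x)|}{\|y^{-1}\cdot x\|_{\mathbb{G}}^s}\right) + \varphi\!\left(\tfrac{|u(y)|}{\|y^{-1}\cdot x\|_{\mathbb{G}}^s}\right)\Bigr].
\]
On the far region $\|y^{-1}\cdot x\|_{\mathbb{G}}\geq 1$, the factor $\|y^{-1}\cdot x\|_{\mathbb{G}}^{-s}$ is at most $1$, so I can apply the growth estimate \eqref{G1} to pull out $\|y^{-1}\cdot x\|_{\mathbb{G}}^{-sp^-}$ from each summand. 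Fixing $x$ (respectively $y$) and integrating in the remaining variable via Proposition \ref{prop.radialfunct} produces
\[
\int_{\{\|y^{-1}\cdot x\|_{\mathbb{G}}\geq 1\}} \tfrac{dy}{\|y^{-1}\cdot x\|_{\mathbb{G}}^{sp^-+Q}} = QC_b\int_1^{+\infty} r^{-sp^--1}\,dr = \tfrac{QC_b}{sp^-},
\]
and the analogous identity in $x$ follows by the left-invariance of the Haar measure. Combining and using Fubini, one obtains $J_{\rm far}(s)\leq \mathbf{C}\,\tfrac{QC_b}{sp^-}\Phi_{\varphi}(u)$, so that $\limsup_{s\downarrow 0} s J_{\rm far}(s)\leq \mathbf{C}\,\tfrac{QC_b}{p^-}\Phi_{\varphi}(u)$, which is exactly the target upper bound.

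For $J_{\rm near}$, the hypothesis is that $u\in W^{s_0,\varphi}(\mathbb{G})$ for some $s_0\in (0,1)$. For every $0<s<s_0$ and every pair $(x,y)$ with $\|y^{-1}\cdot x\|_{\mathbb{G}}<1$ we have $\|y^{-1}\cdot x\|_{\mathbb{G}}^{-s}\leq \|y^{-1}\cdot x\|_{\mathbb{G}}^{-s_0}$, so by monotonicity of $\varphi$
\[
J_{\rm near}(s)\leq \iint_{\{\|y^{-1}\cdot x\|_{\mathbb{G}}<1\}}\varphi\!\left(\tfrac{|u(x)-u(y)|}{\|y^{-1}\cdot x\|_{\mathbb{G}}^{s_0}}\right)\tfrac{dx\,dy}{\|y^{-1}\cdot x\|_{\mathbb{G}}^Q}\leq \Phi_{s_0,\varphi}(u)<\infty.
\]
Therefore $s J_{\rm near}(s)\to 0$ as $s\downarrow 0$, and adding the two contributions yields \eqref{limsup}.

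I do not expect a serious technical obstacle in this argument: once the cut at distance $1$ is chosen, everything reduces to $\Delta_2$, the growth estimate \eqref{G1}, and the explicit radial integration from Proposition \ref{prop.radialfunct}. The only delicate point is choosing the splitting so that the $\Delta_2$-linearization of $\varphi(|u(x)-u(y)|/\|y^{-1}\cdot x\|_{\mathbb{G}}^s)$ in terms of $|u(x)|$ and $|u(y)|$ is applied only on the far region, where $\|y^{-1}\cdot x\|_{\mathbb{G}}^{-s}\leq 1$ allows the use of the $p^-$-exponent from \eqref{G1} (rather than $p^+$) and hence matches the constant $\mathbf{C}\,QC_b/p^-$ in the statement.
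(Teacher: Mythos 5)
Your proof is correct, and it takes a genuinely different (and in fact cleaner) route than the paper's.

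You split the double integral at the threshold $\|y^{-1}\cdot x\|_{\mathbb{G}}=1$, which is the classical Maz'ya--Shaposhnikova cut. Your far-region estimate directly yields $J_{\rm far}(s)\leq\C\frac{QC_b}{sp^-}\Phi_\varphi(u)$, so that $s\,J_{\rm far}(s)$ is bounded by the target constant \emph{uniformly in $s$}, with no limit left to evaluate; and your near-region estimate uses the monotonicity of $r\mapsto r^{-s}$ in $s$ on $\{r<1\}$ to dominate $J_{\rm near}(s)$ by the fixed finite quantity $\Phi_{s_0,\varphi}(u)$ for all $s<s_0$, so $s\,J_{\rm near}(s)\to 0$. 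Both steps use only \eqref{G1}, convexity plus $\Delta_2$, and Proposition \ref{prop.radialfunct}, and each $\eqref{G1}$ application is on the far region where the scaling factor is $\leq1$, which is exactly what gives the $p^-$ constant. The paper instead decomposes by comparing $\|x\|_{\mathbb{G}}$ with $\|y\|_{\mathbb{G}}$ (the ``inner cone'' $\|x\|_{\mathbb{G}}\leq\frac12\|y\|_{\mathbb{G}}$ versus the annular region $\frac12\|y\|_{\mathbb{G}}<\|x\|_{\mathbb{G}}<\|y\|_{\mathbb{G}}$, up to a symmetrization via \eqref{fubini}), obtaining in its Step 1 a bound involving the $s$-dependent weighted integral $\int_{\mathbb{G}}\varphi(|u(x)|)\|x\|_{\mathbb{G}}^{-s\tilde p}\,dx$, whose behaviour as $s\downarrow 0$ then needs to be controlled; your decomposition avoids that issue entirely. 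What the paper's finer split buys is that the same regions and the auxiliary estimates \eqref{4}, \eqref{5}, \eqref{step1b} are reused in the proof of the liminf estimate (Theorem \ref{thliminf}) and of the sharper Theorem \ref{MS2} under the Minkowski-type hypothesis \eqref{Minkowski}, so all three proofs share one decomposition; your split would not transfer as directly to those arguments.

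One small stylistic remark: you could state explicitly that $\Phi_\varphi(u)<\infty$ because $W^{s_0,\varphi}(\mathbb{G})\subset L^\varphi(\mathbb{G})$, which is what makes the far-region bound finite; you use it implicitly.
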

\begin{proof}
Let us first notice that, for any $\alpha>0$, in light of the Fubini's Theorem and by a change of variables, it holds that
\begin{align}\label{fubini}
\begin{split}
    \int_{\mathbb{G}}&\left(\int_{\{\|x\|_\mathbb{G}\leq\alpha\|y\|_\mathbb{G}\}}\varphi\left(\frac{|u(x)-u(y)|}{\|y^{-1}\cdot x\|_\mathbb{G}^s}\right)\,\frac{dx}{\|y^{-1}\cdot x\|_\mathbb{G}^Q}\right)\,dy\\
    &=\int_{\mathbb{G}}\left(\int_{\{\|y\|_\mathbb{G}\geq\frac{1}{\alpha}\|x\|_\mathbb{G}\}}\varphi\left(\frac{|u(x)-u(y)|}{\|y^{-1}\cdot x\|_\mathbb{G}^s}\right)\,\frac{dy}{\|y^{-1}\cdot y\|_\mathbb{G}^Q}\right)\,dx\\
    &=\int_{\mathbb{G}}\left(\int_{\{\|x\|_\mathbb{G}\geq\frac{1}{\alpha}\|y\|_\mathbb{G}\}}\varphi\left(\frac{|u(x)-u(y)|}{\|y^{-1}\cdot x\|_\mathbb{G}^s}\right)\,\frac{dx}{\|y^{-1}\cdot x\|_\mathbb{G}^Q}\right)\,dy.
\end{split}
\end{align}
An analogous relation holds when changing $\leq$ with $\geq$. Then 
\begin{align} \label{3}
\begin{split}
  &\Phi_{s,\varphi}(u)=\\ &=\int_{\mathbb{G}}\left(\int_{\{\frac{1}{2}\|y\|_\mathbb{G}<\|x\|_\mathbb{G}<\|y\|_\mathbb{G}\}} + \int_{\{\|x\|_\mathbb{G}\leq\frac{1}{2}\|y\|_\mathbb{G}\} } +  \int_{\{\|x\|_\mathbb{G}\geq\|y\|_\mathbb{G}\} } \right) \varphi\left(\frac{|u(x)-u(y)|}{\|y^{-1}\cdot x\|_\mathbb{G}^s}\right)\,\frac{dx\,dy}{\|y^{-1}\cdot x\|_\mathbb{G}^Q} \\
& =\int_{\mathbb{G}}\left(\int_{\{\frac{1}{2}\|y\|_\mathbb{G}<\|x\|_\mathbb{G}<\|y\|_\mathbb{G}\}} + \int_{\{\|x\|_\mathbb{G}\leq\frac{1}{2}\|y\|_\mathbb{G}\} } +  \int_{\{\|x\|_\mathbb{G}\leq\|y\|_\mathbb{G}\} } \right) \varphi\left(\frac{|u(x)-u(y)|}{\|y^{-1}\cdot x\|_\mathbb{G}^s}\right)\,\frac{dx\,dy}{\|y^{-1}\cdot x\|_\mathbb{G}^Q} \\  
&=2\int_{\mathbb{G}}\left(\int_{\{\frac{1}{2}\|y\|_\mathbb{G}<\|x\|_\mathbb{G}<\|y\|_\mathbb{G}\}} + \int_{\{\|x\|_\mathbb{G}\leq\frac{1}{2}\|y\|_\mathbb{G}\} }  \right) \varphi\left(\frac{|u(x)-u(y)|}{\|y^{-1}\cdot x\|_\mathbb{G}^s}\right)\,\frac{dx\,dy}{\|y^{-1}\cdot x\|_\mathbb{G}^Q}.
\end{split}
\end{align}
We divide the proof of the Theorem in two main steps:

\underline{\textit{Step 1:}} Let us show that
\begin{equation}\label{step1}
    \begin{split}
        2s\int_{\mathbb{G}}&\left(\int_{\{\|x\|_\mathbb{G}\leq\frac{1}{2}\|y\|_\mathbb{G}\}}\varphi\left(\frac{|u(x)-u(y)|}{\|y^{-1}\cdot x\|_\mathbb{G}^s}\right)\,\frac{dx}{\|y^{-1}\cdot x\|_\mathbb{G}^Q}\right)\,dy\\
        &\leq\C\dfrac{QC_b}{p^-}\int_\mathbb{G}\frac{\varphi(|u(x)|)}{\|x\|_\mathbb{G}^{s\tilde{p}}}\,dx+\C s2^{s\tilde{p}}C_b\int_\mathbb{G}\frac{\varphi(|u(x)|)}{\|x\|_\mathbb{G}^{s\tilde{p}}}\,dx.
    \end{split}
\end{equation}
\begin{proof}[Proof of Step 1]
First, by the triangular inequality, $\|y\|_\mathbb{G}\geq 2\|x\|_\mathbb{G}$ implies that
$$
\|y^{-1}\cdot x\|_\mathbb{G}\geq\|y\|_\mathbb{G}-\|x\|_\mathbb{G}\geq\|x\|_\mathbb{G} \qquad \text{and} \qquad \|y^{-1}\cdot x\|_\mathbb{G}\geq\|y\|_\mathbb{G}-\|x\|_\mathbb{G}\geq\frac{1}{2}\|y\|_\mathbb{G}
$$ 
that is, 
\begin{equation}\label{sets2}
    \{\|y\|_\mathbb{G}\geq2\|x\|_\mathbb{G}\}\subset\{\|y^{-1}\cdot x\|_\mathbb{G}\geq\|x\|_\mathbb{G}\},
\end{equation}
\begin{equation}\label{sets3}
    \|y^{-1}\cdot x\|_\mathbb{G}\geq\frac{1}{2}\|y\|_\mathbb{G}.
\end{equation}
Then, by \eqref{sets2}, the Fubini's Theorem and by Proposition \ref{prop.radialfunct}
\begin{align} \label{4}
\begin{split}
    \int_{\mathbb{G}}\bigg(\int_{\{\|x\|_\mathbb{G}\leq\frac{1}{2}\|y\|_\mathbb{G}\}}&\frac{\varphi(|u(x)|)}{\|y^{-1}\cdot x\|_\mathbb{G}^{s\tilde{p}+Q}}\,dx\bigg)\,dy\\
    &=\int_{\mathbb{G}}\left(\int_{\{\|y\|_\mathbb{G}\geq2\|x\|_\mathbb{G}\}}\frac{dy}{\|y^{-1}\cdot x\|_\mathbb{G}^{s\tilde{p}+Q}}\right)\varphi(|u(x)|)\,dx\\
    &\leq\int_{\mathbb{G}}\left(\int_{\{\|y^{-1}\cdot x\|_\mathbb{G}\geq\|x\|_\mathbb{G}\}}\frac{dy}{\|y^{-1}\cdot x\|_\mathbb{G}^{s\tilde{p}+Q}}\right)\varphi(|u(x)|)\,dx\\
    &=QC_b\int_\mathbb{G}\left(\int_{\|x\|_\mathbb{G}}^{+\infty}r^{-s\tilde{p}-1}\,dr\right)\varphi(|u(x)|)\,dx\\
    &=\dfrac{QC_b}{s\tilde{p}}\int_\mathbb{G}\frac{\varphi(|u(x)|)}{\|x\|_\mathbb{G}^{s\tilde{p}}}\,dx\leq\dfrac{QC_b}{sp^-}\int_\mathbb{G}\frac{\varphi(|u(x)|)}{\|x\|_\mathbb{G}^{s\tilde{p}}}\,dx.
\end{split}
\end{align}
Moreover, by \eqref{sets3}, we get
\begin{align}\label{5}
\begin{split} 
    \int_{\mathbb{G}}\bigg( \int_{\{\|x\|_\mathbb{G}\leq\frac{1}{2}\|y\|_\mathbb{G}\}}&\frac{ \varphi(|u(y)|)}{\|y^{-1}\cdot x\|_\mathbb{G}^{s\tilde{p}+Q}}\,dx\bigg) \,dy\\
     &\leq 2^{s\tilde{p}+Q}\int_{\mathbb{G}}\left(\int_{\{\|x\|_\mathbb{G}\leq\frac{1}{2}\|y\|_\mathbb{G}\}}\,dx\right)\frac{\varphi(|u(y)|)}{\|y\|_\mathbb{G}^{s\tilde{p}+Q}}\,dy\\
    &=2^{s\tilde{p}+Q}QC_b\int_{\mathbb{G}}\left(\int_0^{\frac{1}{2}\|y\|_\mathbb{G}}r^{Q-1}\,dr\right)\frac{\varphi(|u(y)|)}{\|y\|_\mathbb{G}^{s\tilde{p}+Q}}\,dy\\
    &=2^{s\tilde{p}}C_b\int_{\mathbb{G}}\frac{\varphi(|u(y)|)}{\|y\|_\mathbb{G}^{s\tilde{p}}}\,dy.
\end{split}
\end{align}
Then, by \eqref{G1}, \eqref{G2}, \eqref{4} and \eqref{5}, we finally have
\begin{align*}
    \int_{\mathbb{G}}&\left(\int_{\{\|x\|_\mathbb{G}\leq\frac{1}{2}\|y\|_\mathbb{G}\}}\varphi\left(\frac{|u(x)-u(y)|}{\|y^{-1}\cdot x\|_\mathbb{G}^s}\right)\,\frac{dx}{\|y^{-1}\cdot x\|_\mathbb{G}^Q}\right)\,dy\\
    &\leq\frac{\C}2  \int_{\mathbb{G}}\left(\int_{\{\|x\|_\mathbb{G}\leq\frac{1}{2}\|y\|_\mathbb{G}\}} \left[ \varphi\left(\frac{|u(x)|}{\|y^{-1}\cdot x\|_\mathbb{G}^s}\right) + \varphi\left(\frac{|u(y)|}{\|y^{-1}\cdot x\|_\mathbb{G}^s}\right) \right]\,\frac{dx}{\|y^{-1}\cdot x\|_\mathbb{G}^Q}\right)\,dy\\
    &\leq\frac{\C}{2}\dfrac{QC_b}{sp^-}\int_\mathbb{G}\frac{\varphi(|u(x)|)}{\|x\|_\mathbb{G}^{s\tilde{p}}}\,dx+ \C 2^{s\tilde{p}-1}C_b\int_{\mathbb{G}}\frac{\varphi(|u(x)|)}{\|x\|_\mathbb{G}^{s\tilde{p}}}\,dx
\end{align*}
and hence \eqref{step1} follows.
\end{proof}
\underline{\textit{Step 2:}} Let us show that
\begin{equation}\label{step2}
    \limsup_{s\downarrow 0}s\int_{\mathbb{G}}\left(\int_{\{\frac{1}{2}\|y\|_\mathbb{G}<\|x\|_\mathbb{G}<\|y\|_\mathbb{G}\}}\varphi\left(\frac{|u(x)-u(y)|}{\|y^{-1}\cdot x\|_\mathbb{G}^s}\right)\,\frac{dx}{\|y^{-1}\cdot x\|_\mathbb{G}^Q}\right)\,dy=0.
\end{equation}
\begin{proof}[Proof of Step 2]
Fixed $N>1$ we write
\begin{equation} \label{ee0}
\int_{\mathbb{G}}\left(\int_{\{\frac{1}{2}\|y\|_\mathbb{G}<\|x\|_\mathbb{G}<\|y\|_\mathbb{G}\}}\varphi\left(\frac{|u(x)-u(y)|}{\|y^{-1}\cdot x\|_\mathbb{G}^s}\right)\,\frac{dx}{\|y^{-1}\cdot x\|_\mathbb{G}^Q}\right)\,dy = (i)+(ii), 
\end{equation}
where
\begin{align*}
(i):=\int_{\mathbb{G}}\left(\int_{\{\frac{1}{2}\|y\|_\mathbb{G}<\|x\|_\mathbb{G}<\|y\|_\mathbb{G},\|y^{-1}\cdot x\|_\mathbb{G}\leq N\}}\varphi\left(\frac{|u(x)-u(y)|}{\|y^{-1}\cdot x\|_\mathbb{G}^s}\right)\,\frac{dx}{\|y^{-1}\cdot x\|_\mathbb{G}^Q}\right)\,dy\\
(ii):=\int_{\mathbb{G}}\left(\int_{\{\frac{1}{2}\|y\|_\mathbb{G}<\|x\|_\mathbb{G}<\|y\|_\mathbb{G},\|y^{-1}\cdot x\|_\mathbb{G}> N\}}\varphi\left(\frac{|u(x)-u(y)|}{\|y^{-1}\cdot x\|_\mathbb{G}^s}\right)\,\frac{dx}{\|y^{-1}\cdot x\|_\mathbb{G}^Q}\right)\,dy.
\end{align*}
Observe that fixed $\overline{s}\in(0,1)$ such that $\overline{s}>s$, by \eqref{G1}, $(i)$ can be bounded as 
\begin{align} \label{6}
 N^{(\overline{s}-s)\tilde{p}}\int_{\mathbb{G}}\left(\int_{\{\frac{1}{2}\|y\|_\mathbb{G}<\|x\|_\mathbb{G}<\|y\|_\mathbb{G},\|y^{-1}\cdot x\|_\mathbb{G}\leq N\}}\varphi\left(\frac{|u(x)-u(y)|}{\|y^{-1}\cdot x\|_\mathbb{G}^{\overline{s}}}\right)\,\frac{dx}{\|y^{-1}\cdot x\|_\mathbb{G}^Q}\right)\,dy.
\end{align}
Moreover, by \eqref{G1} and \eqref{G2}, we get
\begin{align*}
    (ii)
    &\leq\frac{\C}{2}\int_{\mathbb{G}}\left(\int_{\{\frac{1}{2}\|y\|_\mathbb{G}<\|x\|_\mathbb{G}<\|y\|_\mathbb{G},\|y^{-1}\cdot x\|_\mathbb{G}>N\}} \frac{ \varphi(|u(x)|)}{\|y^{-1}\cdot x\|_\mathbb{G}^{sp^-+Q}} \,dx\right)\,dy\\
    &\quad+\frac{\C}{2}\int_{\mathbb{G}}\left(\int_{\{\frac{1}{2}\|y\|_\mathbb{G}<\|x\|_\mathbb{G}<\|y\|_\mathbb{G},\|y^{-1}\cdot x\|_\mathbb{G}>N\}}\,\frac{dx}{\|y^{-1}\cdot x\|_\mathbb{G}^{sp^-+Q}}\right)\varphi(|u(y)|)\,dy\\
    &:=\frac{\C}{2}\left((i')+(ii')\right).
\end{align*}
Let us note that, by the Fubini's Theorem and a change of variables
\begin{align*}
    (i')&=\int_{\mathbb{G}}\left(\int_{\{\|x\|_\mathbb{G}<\|y\|_\mathbb{G}<2\|x\|_\mathbb{G},\|y^{-1}\cdot x\|_\mathbb{G}>N\}}\,\frac{dy}{\|y^{-1}\cdot x\|_\mathbb{G}^{sp^-+Q}}\right)\varphi(|u(x)|)\,dx\\
    &=\int_{\mathbb{G}}\left(\int_{\{\|y\|_\mathbb{G}<\|x\|_\mathbb{G}<2\|y\|_\mathbb{G},\|y^{-1}\cdot x\|_\mathbb{G}>N\}}\,\frac{dx}{\|y^{-1}\cdot x\|_\mathbb{G}^{sp^-+Q}}\right)\varphi(|u(y)|)\,dy\\
    &\leq (ii')
\end{align*}
where we have used the inclusion
\begin{align*}
    &\{\|y\|_\mathbb{G}<\|x\|_\mathbb{G}<2\|y\|_\mathbb{G},\|y^{-1}\cdot x\|_\mathbb{G}>N\}
    \subset\left\{\frac{1}{2}\|y\|_\mathbb{G}<\|x\|_\mathbb{G}<\|y\|_\mathbb{G},\|y^{-1}\cdot x\|_\mathbb{G}>N\right\}.
\end{align*}
The last two inequalities lead to
\begin{equation}\label{7}
(ii)\leq \C (ii').
\end{equation}
Now, since
\begin{align*}
    \left\{ \frac{1}{2}\|y\|_\mathbb{G}<\|x\|_\mathbb{G}<\|y\|_\mathbb{G},\|y^{-1}\cdot x\|_\mathbb{G}>N\right\}
    \subset\left\{\|y\|_\mathbb{G}>\frac{N}{2},\|y^{-1}\cdot x\|_\mathbb{G}>N\right\},
\end{align*}
using Proposition \ref{prop.radialfunct}, we get
\begin{align}\label{8}
\begin{split}
    (ii')    &\leq  \int_{{\{\|y\|_\mathbb{G}>\frac{N}{2}}\}}\left(\int_{\{\|y^{-1}\cdot x\|_\mathbb{G}>N\}}\,\frac{dx}{\|y^{-1}\cdot x\|_\mathbb{G}^{sp^-+Q}}\right)\varphi(|u(y)|)\,dy\\
    &=  QC_b\int_{{\{\|y\|_\mathbb{G}>\frac{N}{2}}\}}\left(\int_N^{+\infty}r^{-sp^--1}\,dr\right)\varphi(|u(y)|)\,dy\\
    &=  \frac{QC_b}{sp^-N^{sp^-}}\int_{{\{\|y\|_\mathbb{G}>\frac{N}{2}}\}}\varphi(|u(y)|)\,dy.
\end{split}
\end{align}

Then, from \eqref{ee0}, \eqref{6}, \eqref{7} and \eqref{8}
\begin{align*}
    \limsup_{s\downarrow 0}s\int_{\mathbb{G}}&\left(\int_{\{\frac{1}{2}\|y\|_\mathbb{G}<\|x\|_\mathbb{G}<\|y\|_\mathbb{G}\}}\varphi\left(\frac{|u(x)-u(y)|}{\|y^{-1}\cdot x\|_\mathbb{G}^s}\right)\,\frac{dx}{\|y^{-1}\cdot x\|_\mathbb{G}^Q}\right)\,dy\leq\\ 
    &\leq \limsup_{s\downarrow 0}\frac{\C QC_b}{p^-N^{sp^-}}\int_{{\{\|y\|_\mathbb{G}>\frac{N}{2}}\}}\varphi(|u(y)|)\,dy=0
\end{align*}
for $N$ sufficiently large.
\end{proof}
Finally, we obtain the limsup inequality \eqref{limsup} by gathering  \eqref{3},  \underline{\textit{Step 1}}, \underline{\textit{Step 2}} and by using the Fatou's Lemma.
\end{proof}
As a corollary of Theorem \ref{thliminf} and Theorem \ref{thlimsup}, we get Theorem \ref{MS1}.
\begin{rmk}\label{rmkmin}
We remind that, as mentioned in the Introduction, Theorem \ref{MS1} does not provide a complete generalization of \eqref{MSclassic}, even in the prototype case $\varphi(t)=t^p$, i.e. $p^+=p^-$, being
$\frac{4}{\C}<\C$
by construction (see Definition \ref{delta2} for details).
\end{rmk}
\medskip

Let us conclude the paper with the proof of Theorem \ref{MS2}.
\begin{proof}[Proof of Theorem \ref{MS2}]
The first part of the proof, that is the $\liminf$-inequality, follows from similar arguments of Theorem \ref{thliminf}. Indeed, by \eqref{G1}, the Minkowski inequality,  \eqref{0} and \eqref{000}, we have
\begin{align*}
    &s\int_{\mathbb{G}}\left(\int_{\{\|y^{-1}\cdot x\|_\mathbb{G}>2\|x\|_\mathbb{G}\}}\varphi\left(\frac{|u(x)|}{\|y^{-1}\cdot x\|_\mathbb{G}^s}\right)\,\frac{dy}{\|y^{-1}\cdot x\|_\mathbb{G}^Q}\right)\,dx\\
    &\leq
    (\varphi\circ\varphi^{-1})\left[s\int_{\mathbb{G}}\left(\int_{\{\|y^{-1}\cdot x\|_\mathbb{G}>2\|x\|_\mathbb{G}\}}\varphi\left(\frac{|u(x)-u(y)|}{\|y^{-1}\cdot x\|_\mathbb{G}^s}+\frac{|u(y)|}{\|y^{-1}\cdot x\|_\mathbb{G}^s}\right)\,\frac{dy}{\|y^{-1}\cdot x\|_\mathbb{G}^Q}\right)\,dx\right]\\  
    &\leq
    \varphi\Bigg[\varphi^{-1}\left(s\int_{\mathbb{G}}\left(\int_{\{\|y^{-1}\cdot x\|_\mathbb{G}>2\|x\|_\mathbb{G}\}}\varphi\left(\frac{|u(x)-u(y)|}{\|y^{-1}\cdot x\|_\mathbb{G}^s}\right)\,\frac{dy}{\|y^{-1}\cdot x\|_\mathbb{G}^Q}\right)\,dx\right)\\
    &\quad+\varphi^{-1}\left(s\int_{\mathbb{G}}\left(\int_{\{\|y^{-1}\cdot x\|_\mathbb{G}>2\|x\|_\mathbb{G}\}}\varphi\left(\frac{|u(y)|}{\|y^{-1}\cdot x\|_\mathbb{G}^s}\right)\,\frac{dy}{\|y^{-1}\cdot x\|_\mathbb{G}^Q}\right)\,dx\right)\Bigg]\\
    &\leq\varphi\left[\varphi^{-1}\left(\frac{1}{2}s\Phi_{s,\varphi}(u)\right)+\varphi^{-1}\left(\frac{QC_b}{2^{s\tilde{p}}\tilde{p}}(3^{s\tilde{p}}-1)\int_{\mathbb{G}}\frac{\varphi(|u(y)|)}{\|y\|_\mathbb{G}^{s\tilde{p}}}\,dy\right)\right],
\end{align*}
and then, by \eqref{1} and the monotonicity of $\varphi^{-1}$, we get
\begin{align*}
    \varphi^{-1}\left(\frac{1}{2}s\Phi_{s,\varphi}(u)\right)&+\varphi^{-1}\left(\frac{QC_b}{2^{s\tilde{p}}\tilde{p}}(3^{s\tilde{p}}-1)\int_{\mathbb{G}}\frac{\varphi(|u(x)|)}{\|x\|_\mathbb{G}^{s\tilde{p}}}\,dx\right)\\
    &\geq\varphi^{-1}\left(\dfrac{QC_b}{2^{s\overline{p}}p^+}\int_\mathbb{G}\frac{\varphi(|u(x)|)}{\|x\|_\mathbb{G}^{s\overline{p}}}\,dx\right).
\end{align*}
Then, by the properties of the $\liminf$, we have
\begin{align*}
    \liminf_{s\downarrow0}\varphi^{-1}\left(\frac{1}{2}s\Phi_{s,\varphi}(u)\right)&\geq\liminf_{s\downarrow0}\left(\varphi^{-1}\left(\dfrac{QC_b}{2^{s\overline{p}}p^+}\int_\mathbb{G}\frac{\varphi(|u(x)|)}{\|x\|_\mathbb{G}^{s\overline{p}}}\,dx\right)\right)\\
    &\quad+\liminf_{s\downarrow0}\left(-\varphi^{-1}\left(\frac{QC_b}{2^{s\tilde{p}}\tilde{p}}(3^{s\tilde{p}}-1)\int_{\mathbb{G}}\frac{\varphi(|u(x)|)}{\|x\|_\mathbb{G}^{s\tilde{p}}}\,dx\right)\right)
\end{align*}
the right hand side of the inequality above is equal to
$$
\liminf_{s\downarrow0}\left(\varphi^{-1}\left(\dfrac{QC_b}{2^{s\overline{p}}p^+}\int_\mathbb{G}\frac{\varphi(|u(x)|)}{\|x\|_\mathbb{G}^{s\overline{p}}}\,dx\right)\right)-\limsup_{s\downarrow0}\left(\varphi^{-1}\left(\frac{QC_b}{2^{s\tilde{p}}\tilde{p}}(3^{s\tilde{p}}-1)\int_{\mathbb{G}}\frac{\varphi(|u(x)|)}{\|x\|_\mathbb{G}^{s\tilde{p}}}\,dx\right)\right)
$$
and, by the continuity of $\varphi^{-1}$, it is equivalent to
$$
\varphi^{-1}\left(\liminf_{s\downarrow0}\left(\dfrac{QC_b}{2^{s\overline{p}}p^+}\int_\mathbb{G}\frac{\varphi(|u(x)|)}{\|x\|_\mathbb{G}^{s\overline{p}}}\,dx\right)\right)-\varphi^{-1}\left(\limsup_{s\downarrow0}\left(\frac{QC_b}{2^{s\tilde{p}}\tilde{p}}(3^{s\tilde{p}}-1)\int_{\mathbb{G}}\frac{\varphi(|u(x)|)}{\|x\|_\mathbb{G}^{s\tilde{p}}}\,dx\right)\right).
$$
Therefore, by the Fatou's Lemma, we obtain that
$$
\liminf_{s\downarrow0}\varphi^{-1}\left(\frac{1}{2}s\Phi_{s,\varphi}(u)\right)\geq\varphi^{-1}\left(\dfrac{QC_b}{p^+}\Phi_\varphi(u)\right)-\varphi^{-1}(0).
$$
Thus, by the monotonicity and continuity of $\varphi$, and being $\varphi^{-1}(0)=0$, we get that
\begin{align*}
    \liminf_{s\downarrow0}\left(\frac{1}{2}s\Phi_{s,\varphi}(u)\right)&=\varphi\left(\varphi^{-1}\liminf_{s\downarrow0}\left(\frac{1}{2}s\Phi_{s,\varphi}(u)\right)\right)\\
    &=\varphi\left(\liminf_{s\downarrow0}\varphi^{-1}\left(\frac{1}{2}s\Phi_{s,\varphi}(u)\right)\right)\\
    &\geq\varphi\left(\varphi^{-1}\left(\dfrac{QC_b}{p^+}\Phi_\varphi(u)\right)\right)=\dfrac{QC_b}{p^+}\Phi_\varphi(u),
\end{align*}
as desired.

In order to conclude the proof of the Theorem, let us refine the proof of the $\limsup$ estimate. To this aim, it is not necessary to modify \underline{\textit{Step 2}} of Theorem \ref{thlimsup}. Instead, the following ``new version" of \underline{\textit{Step 1}} is needed:

\underline{\textit{Step 1b}}:
\begin{equation}\label{step1b}
    \begin{split}
        2s\int_{\mathbb{G}}&\left(\int_{\{\|x\|_\mathbb{G}\leq\frac{1}{2}\|y\|_\mathbb{G}\}}\varphi\left(\frac{|u(x)-u(y)|}{\|y^{-1}\cdot x\|_\mathbb{G}^s}\right)\,\frac{dx}{\|y^{-1}\cdot x\|_\mathbb{G}^Q}\right)\,dy\\
        &\leq\varphi\left[\varphi^{-1}\left(2\dfrac{QC_b}{p^-}\int_\mathbb{G}\frac{\varphi(|u(x)|)}{\|x\|_\mathbb{G}^{s\tilde{p}}}\,dx\right)+\varphi^{-1}\left(2 s2^{s\tilde{p}}C_b\int_\mathbb{G}\frac{\varphi(|u(x)|)}{\|x\|_\mathbb{G}^{s\tilde{p}}}\,dx\right)\right].
    \end{split}
\end{equation}
Indeed, by \eqref{G1} and the Minkowski inequality, we have that
\begin{align*}
    2s\int_{\mathbb{G}}&\left(\int_{\{\|x\|_\mathbb{G}\leq\frac{1}{2}\|y\|_\mathbb{G}\}}\varphi\left(\frac{|u(x)-u(y)|}{\|y^{-1}\cdot x\|_\mathbb{G}^s}\right)\,\frac{dx}{\|y^{-1}\cdot x\|_\mathbb{G}^Q}\right)\,dy\\
    &\leq\varphi\Bigg[\varphi^{-1}\left(2s\int_{\mathbb{G}}\left(\int_{\{\|x\|_\mathbb{G}\leq\frac{1}{2}\|y\|_\mathbb{G}\}}\varphi\left(\frac{|u(x)|}{\|y^{-1}\cdot x\|_\mathbb{G}^s}\right)\,\frac{dx}{\|y^{-1}\cdot x\|_\mathbb{G}^Q}\right)\,dy\right)\\
    &\quad+\varphi^{-1}\left(2s\int_{\mathbb{G}}\left(\int_{\{\|x\|_\mathbb{G}\leq\frac{1}{2}\|y\|_\mathbb{G}\}}\varphi\left(\frac{|u(y)|}{\|y^{-1}\cdot x\|_\mathbb{G}^s}\right)\,\frac{dx}{\|y^{-1}\cdot x\|_\mathbb{G}^Q}\right)\,dy\right)\Bigg]
\end{align*}
and hence,  \eqref{4} and \eqref{5} lead to  \eqref{step1b}.

Thus, named
$$
(i):=\int_{\mathbb{G}}\left(\int_{\{\frac{1}{2}\|y\|_\mathbb{G}<\|x\|_\mathbb{G}<\|y\|_\mathbb{G}\}}\varphi\left(\frac{|u(x)-u(y)|}{\|y^{-1}\cdot x\|_\mathbb{G}^s}\right)\,\frac{dx}{\|y^{-1}\cdot x\|_\mathbb{G}^Q}\right)\,dy
$$
and
$$
(ii):=\int_{\mathbb{G}}\left(\int_{\{\|x\|_\mathbb{G}\leq\frac{1}{2}\|y\|_\mathbb{G}\}}\varphi\left(\frac{|u(x)-u(y)|}{\|y^{-1}\cdot x\|_\mathbb{G}^s}\right)\,\frac{dx}{\|y^{-1}\cdot x\|_\mathbb{G}^Q}\right)\,dy
$$
by \eqref{3}, \underline{\textit{Step 1b}}, \underline{\textit{Step 2}}, the continuity of $\varphi$ and $\varphi^{-1}$ and the Fatou's Lemma, we finally get
\begin{align*}
    &\limsup_{s\downarrow 0}s\Phi_{s,\varphi}(u) \leq\limsup_{s\downarrow 0}2s \,(i)+\limsup_{s\downarrow 0}2s\, (ii)\\
    &\quad\leq
    \limsup_{s\downarrow 0}\varphi\Bigg[\varphi^{-1}\left(2\dfrac{QC_b}{p^-}\int_\mathbb{G}\frac{\varphi(|u(x)|)}{\|x\|_\mathbb{G}^{s\tilde{p}}}\,dx\right)+\varphi^{-1}\left(2s2^{s\tilde{p}}C_b\int_{\mathbb{G}}\frac{\varphi(|u(x)|)}{\|x\|_\mathbb{G}^{s\tilde{p}}}\,dx\right)\Bigg]\\
    &\quad\leq
    \varphi\Bigg[\varphi^{-1}\limsup_{s\downarrow 0}\left(2\dfrac{QC_b}{p^-}\int_\mathbb{G}\frac{\varphi(|u(x)|)}{\|x\|_\mathbb{G}^{s\tilde{p}}}\,dx\right)+\varphi^{-1}\left(\limsup_{s\downarrow 0}2s2^{s\tilde{p}}C_b\int_{\mathbb{G}}\frac{\varphi(|u(x)|)}{\|x\|_\mathbb{G}^{s\tilde{p}}}\,dx\right)\Bigg]\\
    &\quad\leq\varphi\Bigg[\varphi^{-1}\left(2\dfrac{QC_b}{p^-}\Phi_\varphi(u)\right)+\varphi^{-1}(0)\Bigg]=2\dfrac{QC_b}{p^-}\Phi_\varphi(u)
\end{align*}
concluding the proof.
\end{proof}


\begin{thebibliography}{10}
\bibitem{AmbDepMart}
{\sc L. Ambrosio, G. De Philippis and L. Martinazzi},
{\em $\Gamma-$convergence of nonlocal perimeter functionals,}
Manuscripta Math. {\bf 134}, (2011), 377--403.

\bibitem{B} 
{\sc D. Barbieri}, 
{\em Approximations of Sobolev norms in Carnot groups}, 
Commun. Contemp. Math. {\bf 13}(5), (2011), 765–-794.

\bibitem{BBM}
{\sc J. Bourgain, H. Brezis and P. Mironescu},
{\em Another look at Sobolev spaces}, 
in \emph{Optimal Control and Partial Differential Equations. A Volume in Honor of Professor Alain Bensoussan's 60th Birthday},
(eds. J. L. Menaldi, E. Rofman and A. Sulem), IOS Press, Amsterdam, (2001), 439--455.


\bibitem{bre}
{\sc H. Brezis},
{\it How to recognize constant functions. Connections with Sobolev spaces},
Russian Mathematical Surveys \textbf{57}, (2002), 693--708.

\bibitem{bre-linc}
{\sc H. Brezis}, 
{\em New approximations of the total variation and filters in imaging}, 
Rend Accad. Lincei {\bf 26}, (2015), 223--240.

\bibitem{BHN}
{\sc H. Brezis and H.-M. Nguyen},
{\em Non-local functionals related to the total variation and connections with image processing}, Annals of PDE. Journal Dedicated to the Analysis of Problems from Physical Sciences, {\bf 4} (1), (2018), Art. 9, 77.

\bibitem{BHN2}
{\sc H. Brezis and H.-M. Nguyen},
{\em The BBM formula revisited}, 
Atti Accad. Naz. Lincei Rend. Lincei Mat. Appl. {\bf 27}, (2016), 515--533.

\bibitem{BHN3}
{\sc H. Brezis and H.-M. Nguyen},
{\em Two subtle convex nonlocal approximations of the BV-norm},
Nonlinear Anal. {\bf 137}, (2016), 222–-245. 

\bibitem{BLU}
{\sc A. Bonfiglioli, E. Lanconelli and F. Uguzzoni}, 
{\em Stratified Lie Groups and Potential Theory for Their Sub-Laplacians}, 
Springer, 2007.

\bibitem{BS}
{\sc J. Fern\'{a}ndez Bonder and A.M. Salort},
{\em Fractional order Orlicz-Sobolev spaces}, 
J. Funct. Anal. {\bf 277} (2), (2019), 333–-367.

\bibitem{BS2}
{\sc J. Fern\'{a}ndez Bonder and A.M. Salort},
{\em Magnetic Fractional order Orlicz-Sobolev spaces}, preprint.
https://arxiv.org/abs/1812.05998.

\bibitem{CV}
{\sc L. Caffarelli and E. Valdinoci}, 
{\em Regularity properties of nonlocal minimal surfaces via limiting arguments}, 
Adv. Math. {\bf 248}, (2013), 843--871. 

\bibitem{Cui}
{\sc X. Cui, N. Lam and G. Lu}, 
{\em New characterizations of Sobolev spaces in the Heisenberg group},
J. Funct. Anal. \textbf{267} (2014), 2962--2994.

\bibitem{Cygan}
{\sc J. Cygan}, 
{\em Subadditivity of homogeneous norms on certain nilpotent Lie groups}, 
Proc. Amer. Math. Soc. {\bf 83}, (1981), 69--70.

\bibitem{davila}
{\sc J. Davila}, 
{\em On an open question about functions of bounded variation}, 
Calc. Var. Partial Differential Equations {\bf 15}, (2002), 519--527. 

\bibitem{DHHR} 
{\sc L. Diening, P. Harjulehto, P. H{\"a}st{\"o} and M. Ru\v{z}i\v{c}ka}, 
{\em Lebesgue and Sobolev spaces with variable exponents}. 
Lecture Notes in Mathematics, 2017. Springer, Heidelberg, 2011.

\bibitem{DiFiPaVa}
{\sc S. Dipierro, A. Figalli, G. Palatucci and E. Valdinoci}, 
{\em Asymptotics of the s-perimeter as $s \to 0$}, 
Discrete Contin. Dyn. Syst. {\bf 33}, (2013), 2777--2790.

\bibitem{FMPPS}
{\sc F. Ferrari, M. Miranda, D. Pallara, A. Pinamonti and Y. Sire}
{\em Fractional Laplacians, perimeters and heat semigroups in Carnot groups}, 
Discrete Contin. Dyn. Syst. (Series S), {\bf 11}(3), (2018), 477--491.

\bibitem{FF}
{\sc F. Ferrari and B. Franchi},
{\em Harnack inequality for fractional sub-{L}aplacians in {C}arnot groups}. 
Mathematische Zeitschrift, {\bf 279} (1-2), (2015), 435--458.

\bibitem{FS} 
{\sc G.B. Folland and E.M. Stein}, 
{\em Hardy spaces on homogeneous groups}. 
Mathematical Notes, 28. Princeton University Press, N.J.; University of Tokyo Press, Tokyo, 1982.

\bibitem{FSSC} 
{\sc B. Franchi, R.P. Serapioni and F. Serra Cassano}, 
{\em Rectifiability and perimeter in the Heisenberg Group}, 
Math. Ann., {\bf 321} (3), (2001), 479--531.

\bibitem{FSSC2} 
{\sc B. Franchi, R.P. Serapioni and F. Serra Cassano}, 
{\em On the structure of finite perimeter sets in step 2 {C}arnot groups}, 
J. Geom. Anal. {\bf 13} (3), (2003), 421--466.

\bibitem{HHK} 
{\sc P. Harjulehto, P. H{\"a}st{\"o} and R. Kl{\'e}n}, 
{\em Basic properties of generalized Orlicz spaces}, 
Citeseer (2015).

\bibitem{KR}
{\sc M.A. Krasnosel'ski\u{i} and Ya. B. Ruticki\u{i}}, 
\emph{Convex Functions and Orlicz Spaces}, 
Translated from the first russian edition by Leo F. Boron, P. Noordhoff Ltd. - Groningen, 1961.

\bibitem{KrMo}
{\sc A. Kreum and O. Mordhorst},
{\em Fractional Sobolev norms and BV functions on manifolds},
Nonlinear Analysis {\bf 187}, (2019), 450--466.

\bibitem{Spe}
{\sc G. Leoni and D. Spector}, 
\emph{Characterization of Sobolev and BV spaces},
J. Funct. Anal. {\bf 261} (2011), 2926--2958.

\bibitem{Spe2}
{\sc G. Leoni and D. Spector}, 
{\it Corrigendum to ''Characterization of Sobolev and BV spaces''}, 
J. Funct. Anal. {\bf 266} (2014), 1106--1114.

\bibitem{Ludwig1}
{\sc M.\ Ludwig}, 
{\em Anisotropic fractional Sobolev norms}, 
Adv. Math. \textbf{252} (2014), 150--157.

\bibitem{Ludwig2}
{\sc M.\ Ludwig}, 
{\em Anisotropic fractional perimeters},
J. Differential Geom. \textbf{96} (2014), 77--93. 
\bibitem{L} 
{\sc W.A.J. Luxemburg}, 
{\em Banach function spaces}, 
Thesis, Technische Hogeschool TU Delft 1955.

\bibitem{MaPi}
{\sc A. Maalaoui and A. Pinamonti},
{\em Interpolations and fractional Sobolev spaces in Carnot groups},
Nonlinear Analysis {\bf 179}, (2019), 91--104.

\bibitem{MS} 
{\sc V. Maz'ya and T. Shaposhnikova}, 
{\em On the {B}ourgain, {B}rezis, and {M}ironescu theorem concerning limiting embeddings of fractional {S}obolev spaces}, 
J. Funct. Anal. {\bf 195} (2), (2002), 230--238.

\bibitem{Ng06}
{\sc H.-M. Nguyen}, 
{\em Some new characterizations of Sobolev spaces},
J. Funct. Anal. \textbf{237}, (2006), 689--720.

\bibitem{Ng08}
{\sc H.-M. Nguyen},  
{\em Further characterizations of Sobolev spaces}, 
J. Eur.  Math. Soc. (JEMS) \textbf{10}, (2008), 191--229.

\bibitem{Ng11}
{\sc H.-M. Nguyen},
{\em $\Gamma$-convergence, Sobolev norms, and BV functions}, 
Duke Math. J. \textbf{157}, (2011), 495--533.
  
\bibitem{Ng11bis}
{\sc H.-M. Nguyen},
{\em Some inequalities related to Sobolev norms},
Calc. Var. Partial Differential Equations {\bf 41}, (2011), 483--509.


\bibitem{NPSV}
{\sc H.-M. Nguyen, A. Pinamonti, M. Squassina and E. Vecchi},
{\em New characterizations of magnetic Sobolev spaces}, 
Advances in Nonlinear Analysis {\bf 7} (2), (2018), 227--245.

\bibitem{NPSV2}
{\sc H.-M. Nguyen, A. Pinamonti, M. Squassina and E. Vecchi},
{\em Some characterizations of magnetic Sobolev spaces}, 
to appear in Complex Variables and Elliptic Equations. \\
doi: 10.1080/17476933.2018.1520850.

\bibitem{P} 
{\sc P. Pansu}, 
{\em M\'{e}triques de {C}arnot-{C}arath\'{e}odory et quasi isom\'{e}tries des espaces sym\'{e}triques de rang un}, 
Ann. of Math. {\bf 129} (2), (1989), 1--60.

\bibitem{PKJF} 
{\sc L. Pick, A. Kufner, O. John and S. Fu\v{c}\'{\i}k}, 
{\em Function spaces. {V}ol. 1}, 
De Gruyter Series in Nonlinear Analysis and Applications {\bf 129} (14), (2013), xvi+479 pp.

\bibitem{PSV} 
{\sc A. Pinamonti, M. Squassina and E. Vecchi}, 
{\em The {M}az'ya-{S}haposhnikova limit in the magnetic setting}, 
J. Math. Anal. Appl. {\bf 449} (2), (2017), 1152--1159.

\bibitem{PSV2}
{\sc A. Pinamonti, M. Squassina and E. Vecchi},
{\em Magnetic BV-functions and the Bourgain-Brezis-Mironescu formula}, 
Adv. Calc. Var. {\bf 12} (3), (2019), 225--252.

\bibitem{Ponce}
{\sc A. Ponce},
{\em A new approach to Sobolev spaces and connections to $\Gamma$-convergence}, 
Calculus of Variations and Partial Differential Equations {\bf 19} (3), (2004), 229--255.

\bibitem{SqVo}
{\sc M. Squassina and B. Volzone}, 
{\em Bourgain-Brezis-Mironescu formula for magnetic operators},
C. R. Math. Acad. Sci. Paris {\bf 354} (2016), 825--831.
\end{thebibliography}
\end{document}